\documentclass[11pt]{article} 
\usepackage{latexsym}
\usepackage{graphics,epsf,psfrag}
\usepackage{graphicx}
\usepackage{amssymb}
\usepackage{amsmath}

\textwidth 7in
\textheight 9.in
\hoffset -1in
\voffset -1 in

\newtheorem{definition}{Definition}
\newtheorem{lemma}{Lemma}
\newtheorem{theorem}{Theorem}
\newtheorem{claim}{Claim}

\newtheorem{proposition}{Proposition}
\newtheorem{corollary}{Corollary}

\newenvironment{proof}{{\bf Proof.}}{\hfill $\Box$ \bigskip}

\title{Global Behavior of Solutions to Two Classes of Second Order Rational Difference Equations}
\author{{Sukanya Basu} \quad {Orlando Merino}\\
University of Rhode Island, Kingston, RI 02881\\
{\it sukanya@math.uri.edu \quad merino@math.uri.edu}}
\date{\ }

\begin{document}
\maketitle
\begin{abstract}
For nonnegative real numbers 
$\alpha$, $\beta$, $\gamma$, $A$, $B$ and $C$
such that $B+C>0$ and $\alpha+\beta+\gamma >0$, 
the difference equation
 \begin{equation*}
x_{n+1}=\displaystyle\frac{\alpha +\beta\,x_{n}+\gamma\,x_{n-1}}{A+B\,x_{n}+C\,x_{n-1}}\,, \quad  n=0,1,2,\dots
\end{equation*}
has a unique positive equilibrium.
A proof is given here for the following statements:
\medskip

\noindent
Theorem 1. {\it For every choice of positive parameters
$\alpha$, $\beta$, $\gamma$, $A$, $B$ and $C$,
all solutions to the difference equation
 \begin{equation*}
x_{n+1}=\displaystyle\frac{\alpha +\beta\,x_{n}+\gamma\,x_{n-1}}{A+B\,x_{n}+C\,x_{n-1}}\,, \quad  n=0,1,2,\dots, \quad   x_{-1},x_{0}\in [0,\infty) 
\end{equation*}
converge 
to the positive equilibrium or to a prime period-two solution.}
\medskip

\noindent
Theorem 2. {\it For every choice of positive parameters
$\alpha$, $\beta$, $\gamma$, $A$, $B$ and $C$,
all solutions to the difference equation
\begin{equation*}
 x_{n+1}= \displaystyle\frac{\alpha +\beta\,x_{n}+\gamma\,x_{n-1}}{B\,x_{n}+C\,x_{n-1}}  \,, \quad n=0,1,2,\dots, \quad   x_{-1},x_{0}\in (0,\infty) 
\end{equation*}
converge 
to the positive equilibrium or to a prime period-two solution.}
\end{abstract}
 \tableofcontents
\vfill
{Key Words:} difference equation, rational, global behavior, global attractivity, period-two solution. \newline
{\footnotesize AMS 2000 Mathematics Subject Classification: Primary: 39A05 Secondary: 39A11} 
\newpage 

\section{Introduction and Main Results}
\label{sec: Introduction}
In their book \cite{KuL01}, M. Kulenovi\'c and G. Ladas initiated a systematic study of the difference equation
 \begin{equation}
 \label{eq: 3-3 orig.}
x_{n+1}=\displaystyle\frac{\alpha +\beta\,x_{n}+\gamma\,x_{n-1}}{A+B\,x_{n}+C\,x_{n-1}}\,, \quad  n=0,1,2,\dots
\end{equation}
for  nonnegative real numbers 
$\alpha$, $\beta$, $\gamma$, $A$, $B$ and $C$
such that $B+C>0$ and $\alpha+\beta+\gamma >0$,
and for nonnegative or positive initial conditions $x_{-1}$, $x_0$.
Under these conditions, (\ref{eq: 3-3 orig.}) has a unique positive equilibrium.
One of their main ideas in this undertaking was to
make the task more manageable by considering 
separate cases when one or more of the parameters in (\ref{eq: 3-3 orig.}) 
is zero.  The need for this strategy is made apparent by cases such as 
the well known {\it Lyness Equation}  \cite{Ladas95}, \cite{Zeeman}, \cite{Kulenovic}.
\begin{equation}
x_{n+1} = \frac{\alpha + x_n}{x_{n-1}}
\end{equation}  
whose dynamics differ significantly from other equations in this class.
There are a total of 42 cases that arise  from (\ref{eq: 3-3 orig.}) in the manner just discussed,
under the hypotheses $B+C>0$ and $\alpha+\beta+\gamma >0$.
The recent publications \cite{Ladas-07-1}, \cite{Ladas-07-2} give a detailed account
of the progress up to 2007 in the study of dynamics of the class of equations (\ref{eq: 3-3 orig.}).
After a sustained effort by many researchers
(for extensive references, see \cite{Ladas-07-1}, \cite{Ladas-07-2}), there are some cases that have resisted a complete analysis.
We list them below in normalized form, as presented in \cite{Ladas-07-1}, \cite{Ladas-07-2}.
\begin{eqnarray}
x_{n+1} & = & \frac{\alpha+x_n}{A+ x_{n-1}}  \label{eq: ladas y2k} \\
x_{n+1} & = & \frac{\alpha+x_n}{x_n+C\, x_{n-1}} \\
x_{n+1} & = & \frac{\alpha +\beta\,x_{n}+\gamma\,x_{n-1}}{x_{n-1}} \label{eq: kul 1} \\
x_{n+1} & = & \frac{\alpha + x_{n}}{A+B\,x_{n}+x_{n-1}}  \label{eq: kul} \\
x_{n+1} & = & \frac{\beta\,x_{n}+x_{n-1}}{A+B\,x_{n}+ x_{n-1}} \label{eq: ladas 2-3} \\
x_{n+1} & = & \frac{\alpha +\beta\,x_{n}+\gamma\,x_{n-1}}{A+ x_{n-1}} \\
x_{n+1} & = & \frac{\alpha + x_{n}+\gamma\,x_{n-1}}{ B\,x_{n}+ x_{n-1}} \label{eq: ladas 3-2} \\
x_{n+1} & = & \frac{\alpha +\beta\,x_{n}+ x_{n-1}}{A+B\,x_{n}+ x_{n-1}}   \label{eq: ladas 3-3}
\end{eqnarray}
The dynamics of Equation (\ref{eq: ladas 2-3}) has been settled recently in \cite{BK}, \cite{Stevic}.
Global attractivity of the positive equilibrium  of Equation (\ref{eq: ladas y2k}) has been 
proved recently in \cite{Y2K}. Since Eq.(\ref{eq: kul}) 
can be reduced to Eq.(\ref{eq: ladas y2k}) through
a change of variables \cite{KuLMR03}, 
global behavior of solutions to (\ref{eq: kul}) is also settled.
Equation (\ref{eq: kul 1}) is another equation that can be reduced to (\ref{eq: ladas y2k}),
through the change of variables $x_n = y_n+\gamma$ \cite{Mustafa told me this one}.

Ladas and co-workers \cite{KuL01}, \cite{Ladas-07-1}, \cite{Ladas-07-2}, have posed a series of conjectures on these equations.
One of them is the following.
\medskip

\noindent {\bf Conjecture [Ladas et al.]}\ 
{\it For equations (\ref{eq: ladas 3-2}) and (\ref{eq: ladas 3-3}), every solution converges to 
the positive equilibrium or to a prime period-two solution.
}
\medskip

In this article, we prove this conjecture.  Our main results are the following.
\medskip

\begin{theorem}
\label{th: 3-3}
For every choice of positive parameters
$\alpha$, $\beta$, $\gamma$, $A$, $B$ and $C$,
all solutions to the difference equation
 \begin{equation}
\tag{3-3}
x_{n+1}=\displaystyle\frac{\alpha +\beta\,x_{n}+\gamma\,x_{n-1}}
{A+B\,x_{n}+C\,x_{n-1}}\,, \quad  n=0,1,2,\dots, \quad   x_{-1},x_{0}\in [0,\infty) 
\end{equation}
converge 
to the positive equilibrium or to a prime period-two solution.
\end{theorem}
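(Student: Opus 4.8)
The plan is to exploit the structure of the map $F(x,y) = \dfrac{\alpha+\beta y + \gamma x}{A + By + Cx}$ that generates the equation (here I write the recurrence as $x_{n+1}=F(x_{n-1},x_n)$ so that $x_{n+1}=F(x_{n-1},x_n)$ depends monotonically on each argument). Since all parameters are positive, $F$ is increasing in one variable and decreasing in the other on $[0,\infty)^2$ — which of the two depends on the sign of the quantities $\beta A - \alpha B$ and $\gamma A - \alpha C$ (equivalently, on comparing $\beta/B$, $\gamma/C$ with the equilibrium). The natural approach is therefore the theory of competitive/monotone maps in the plane (as developed by Kulenovi\'c--Merino and others): pass to the second iterate or to the map $T(x,y)=(y,F(x,y))$ on $[0,\infty)^2$, identify the region where $T$ (or $T^2$) is competitive with respect to the South-East partial order, and use the fact that for such maps every bounded orbit converges to a fixed point of the appropriate iterate. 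First I would establish boundedness and persistence of all solutions: from $x_{n+1}\le \max\{\beta/B,\gamma/C\} + \alpha/(A)$-type estimates one gets an absorbing interval $[m,M]\subset(0,\infty)$ (after one step, if initial data are on the boundary), so it suffices to work on a compact rectangle.

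The key steps, in order, are: (i) reduce to the invariant absorbing box $[m,M]^2$ and record the monotonicity type of $F$ there; (ii) split into the two cases according to whether $T$ is competitive or cooperative on this box, noting that in the cooperative case one passes to $T^2$, which is competitive; (iii) apply the structure theorem for competitive planar maps: the box is partitioned by the stable manifolds of any interior saddle fixed points, and on each piece orbits are eventually monotone in the SE-order, hence convergent; (iv) translate convergence of $T^2$-orbits back to the original sequence — a convergent subsequence forces either convergence of $\{x_n\}$ to the equilibrium or convergence to a period-two cycle $\ldots,\phi,\psi,\phi,\psi,\ldots$, the two coordinates of a fixed point of $T^2$ that is not a fixed point of $T$. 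Throughout, the unique positive equilibrium $\bar x$ of (3-3) is the unique interior fixed point of $T$; its linearization determines whether it is a sink, a saddle, or non-hyperbolic, and in the saddle case its global stable manifold is the separatrix between the basins of two distinct period-two points, so no other $\omega$-limit behavior is possible.

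The main obstacle I expect is case (ii)–(iii): when $F$ is increasing in both-after-$T^2$ arguments, the competitive map $T^2$ may a priori have period-two points of (3-3) that are themselves saddles of $T^2$, and one must rule out more complicated invariant structures (e.g. nontrivial continua of fixed points, or the stable-manifold separatrix failing to be a graph). Controlling this requires a careful computation of the Jacobian of $T$ and $T^2$ at the equilibrium and at candidate period-two points, showing that whenever the equilibrium is a saddle the two coordinates $(\phi,\psi)$ of the period-two points are locally asymptotically stable for $T^2$, and showing there are no other fixed points of $T^2$ in the box — this is where the explicit rational form of $F$, and in particular the low degree of the defining polynomials, is essential, since it bounds the number of period-two solutions. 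A secondary technical point is handling non-hyperbolic parameter values (where the equilibrium has an eigenvalue $-1$ or $+1$), which typically must be treated by a separate continuity/perturbation argument or by a direct Lyapunov-type estimate; I would isolate these measure-zero parameter sets and dispatch them last.
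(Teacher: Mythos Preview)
Your proposal has a genuine gap at the very first step. You assert that since all parameters are positive, $F$ is increasing in one variable and decreasing in the other on all of $[0,\infty)^2$, with the type determined by the signs of $\beta A-\alpha B$ and $\gamma A-\alpha C$. This is false: for $F(x,y)=\dfrac{\alpha+\beta y+\gamma x}{A+By+Cx}$ one computes
\[
\partial_x F = \frac{(\gamma A-\alpha C)+(\gamma B-\beta C)y}{(A+By+Cx)^2},
\qquad
\partial_y F = \frac{(\beta A-\alpha B)+(\beta C-\gamma B)x}{(A+By+Cx)^2},
\]
so the sign of each partial changes across a straight line in the $(x,y)$-plane (unless $\beta C=\gamma B$). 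Thus $T$ is not globally competitive or cooperative, and the planar monotone-map machinery cannot be invoked directly. The paper devotes all of Section~3 to this issue: it constructs a nested sequence of invariant intervals $[m_\ell,M_\ell]$ and shows that either every solution already converges to $\overline y$, or the intervals eventually avoid the sign-change lines, yielding an absorbing box on which $f$ \emph{is} coordinate-wise strictly monotone. Only then does a case split by monotonicity type make sense.

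A second, more serious gap concerns the case where (on the absorbing box) $f(x,y)$ is increasing in $x_n$ and decreasing in $x_{n-1}$. Your plan to pass to $T^2$ and invoke competitive structure does not work here: this monotonicity pattern makes $T$ neither competitive nor cooperative for either standard order, and $T^2$ inherits no useful monotonicity. The paper's Theorem~\ref{th: mM first}(iv) reduces this case to showing that the $m$-$M$ system $f(M,m)=M$, $f(m,M)=m$ has only the diagonal solution, but this fails precisely when $p>1$, $q<1$; the paper then resorts to an entirely different mechanism (Section~4): the Lyness invariant $g(x,y)=\dfrac{(1+x)(1+y)(u^2-u+x+y)}{xy}$ is used as a Lyapunov-like function, and one shows $g(T(x,y))<g(x,y)$ or $g(T^2(x,y))<g(x,y)$ or $g(T^3(x,y))<g(x,y)$ on suitable quadrants, the latter two requiring computer-algebra verification of polynomial positivity (with over $3\times 10^5$ terms). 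None of this is captured by stable-manifold or competitive-map arguments, and it is the heart of the paper.
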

\medskip

\begin{theorem}
\label{th: 3-2 orig.}
For every choice of positive parameters
$\alpha$, $\beta$, $\gamma$, $A$, $B$ and $C$,
all solutions to the difference equation
\begin{equation}
\label{eq: 3-2 orig.}
 x_{n+1}= \displaystyle\frac{\alpha +\beta\,x_{n}+\gamma\,x_{n-1}}
 {B\,x_{n}+C\,x_{n-1}}  \,, \quad n=0,1,2,\dots, \quad   x_{-1},x_{0}\in (0,\infty) 
\end{equation}
converge 
to the positive equilibrium or to a prime period-two solution.
\end{theorem}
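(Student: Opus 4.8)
The plan is to confine every solution to a compact invariant square, read off the monotonicity type of the right-hand side on that square, and then apply the convergence tool appropriate to that type: a comparison argument in the regime where the right side is decreasing in both coordinates, and the theory of monotone planar maps in the mixed-monotone regime.

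\textbf{Step 1: an invariant interval.} Write $F(u,v)=(\alpha+\beta u+\gamma v)/(Bu+Cv)$; dividing numerator and denominator by $C$ and rescaling $x_n$ reduces to three essential parameters. Since $F(u,v)\ge(\beta u+\gamma v)/(Bu+Cv)\ge m:=\min\{\beta/B,\gamma/C\}$, every solution satisfies $x_n\ge m$ for $n\ge1$, and then $F(x_n,x_{n-1})\le\alpha/((B+C)m)+\max\{\beta/B,\gamma/C\}=:M$ once $x_{n-1},x_n\ge m$. Hence every solution eventually enters the compact invariant square $[m,M]^2$, so it suffices to study the dynamics there. The positive equilibrium $\bar x$ is the unique positive root of $(B+C)x^2-(\beta+\gamma)x-\alpha=0$.

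\textbf{Step 2: monotonicity type, and the balanced case.} A short computation gives that $\partial_uF$ has the sign of $(\beta C-B\gamma)v-\alpha B$ and $\partial_vF$ the sign of $(\gamma B-C\beta)u-\alpha C$: in each variable separately $F$ is a M\"obius function with no pole in $(0,\infty)$, hence strictly monotone there, but the direction of monotonicity in one variable is governed by a linear threshold in the other. Put $\delta:=\beta C-B\gamma$. If $\delta=0$, then $F$ is strictly decreasing in both variables on $(0,\infty)^2$, and one may invoke the standard result for equations whose right side is non-increasing in each argument: the equilibrium is a global attractor provided the system $m=F(M,M)$, $M=F(m,m)$ has no solution with $m<M$. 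Here that system says exactly that $\{m,M\}$ is a two-cycle of the decreasing M\"obius map $g(x):=F(x,x)=\frac{\beta+\gamma}{B+C}+\frac{\alpha}{(B+C)x}$, and such a map has none (its second iterate has the same fixed point set as itself). So for $\delta=0$ there are no prime period-two solutions and every solution converges to $\bar x$. The same conclusion also follows from a direct $\liminf$--$\limsup$ argument, or from the substitution $x_n=y_n+\beta/B$, which turns the $\delta=0$ equation into $y_{n+1}=\alpha/(q+By_n+Cy_{n-1})$ for a constant $q>0$, whose global attractivity is known.

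\textbf{Step 3: the mixed-monotone cases.} Assume $\delta\ne0$; the two sub-cases $\delta>0$ and $\delta<0$ are treated the same way, so take $\delta>0$. Then $F$ is strictly decreasing in $v=x_{n-1}$ throughout $(0,\infty)^2$, while in $u=x_n$ it is decreasing where $x_{n-1}<\tau$ and increasing where $x_{n-1}>\tau$, with $\tau:=\alpha B/\delta$. Pass to the planar map $T(p,q)=(q,F(q,p))$ on $[m,M]^2$, with state $(p,q)=(x_{n-1},x_n)$, and to the critical line $p=\tau$. On the part of the square with $p\ge\tau$, $T$ is of monotone (competitive) type, the structure used for the companion equations in this class such as (\ref{eq: ladas 2-3}); on the part with $p\le\tau$, $F$ is decreasing in both slots, so the comparison argument of Step~2 applies. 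If $[m,M]^2$ lies wholly on one side of $p=\tau$ we are done: either Step~2 gives convergence to $\bar x$, or the theory of monotone planar maps with a unique interior fixed point gives that every orbit converges to a fixed point of $T^2$, which is either $\bar x$ or, read off coordinatewise, a prime period-two solution; the prime period-two solutions are located by eliminating between the cubics $\phi=F(\psi,\phi)$, $\psi=F(\phi,\psi)$, and exist precisely past a period-doubling threshold in parameter space.

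\textbf{The main obstacle.} The genuine difficulty is the configuration $m<\tau<M$, in which a solution's tail may straddle the line $p=\tau$, so that neither the ``decreasing in both'' comparison nor the monotone-map theory governs the whole orbit. The heart of the proof is to show that every solution is eventually trapped on one side of $p=\tau$ --- e.g.\ that $x_n-\tau$ is eventually of one sign, which along the orbit is controlled by the sign of $\alpha+(\beta-\tau B)x_{n-1}+(\gamma-\tau C)x_{n-2}$ --- or, absent a clean trapping, to carry the monotone comparison across the line directly. Once that is in place, Steps~2 and~3 close the argument; the same scheme proves Theorem~\ref{th: 3-3}, the term $A>0$ merely relocating the critical line and the equilibrium.
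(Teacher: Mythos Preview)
Your Steps 1 and 2 are essentially correct and mirror the paper's reduction and its Lemma~\ref{lemma: p=q} for the balanced case. The genuine gap is in Step~3, and it is not the obstacle you flag.

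When $\delta>0$ and the invariant box lies in the region $v>\tau$, the right side $F$ is \emph{increasing in $x_n$ and decreasing in $x_{n-1}$}. You assert that the planar map $T(p,q)=(q,F(q,p))$ is then ``of monotone (competitive) type'' and that the monotone-map theory forces every orbit to converge to a fixed point of $T^2$. This is false: the Jacobian of $T$ has off-diagonal entries $1$ and $\partial_2 F<0$, of opposite sign, so $T$ is neither cooperative nor competitive with respect to any of the standard quadrant orders, and no general monotone-map result applies. (The Camouzis--Ladas trichotomy, which does give the equilibrium-or-period-two conclusion, requires $F$ \emph{decreasing} in $x_n$ and \emph{increasing} in $x_{n-1}$ --- the opposite sign pattern, which arises for $\delta<0$.) Moreover, the $m$--$M$ comparison you use in Step~2 fails here too: the system $M=F(M,m)$, $m=F(m,M)$ can have solutions with $m\neq M$ when $p>1$ and $q<1$ (in the paper's normalized parameters), so Theorem~\ref{th: mM first}(iv) does not close the argument.

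This $(+,-)$ regime is in fact the crux of the paper. It is handled in Section~\ref{sec: The Equation (3-2)} by constructing a Lyapunov-like function from the Lyness invariant $g(x,y)=(1+x)(1+y)(u^2-u+x+y)/(xy)$ and proving, with substantial computer-algebra verification, that $g$ strictly decreases along orbits after one, two, or three steps depending on the quadrant relative to the equilibrium (Claims~\ref{claim: Q2Q4}--\ref{claim: Q1Q3, g<b}). Nothing resembling monotone-map theory enters. Your ``main obstacle'' --- orbits straddling the critical line --- is also real, but the paper disposes of it by a nested-interval argument (Proposition~\ref{prop: invariant and attracting}, Lemmas~\ref{lemma: m < phi}--\ref{lemma: p=q}) showing that either all solutions converge to $\bar y$ or there is a smaller invariant attracting interval on which $f$ has a single monotonicity type; you do not supply this argument either, but it is the easier of the two missing pieces.
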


A reduction of the number of parameters of Eq.(\ref{eq: 3-2 orig.}) is obtained with the change of variables
$x_n = \frac{\gamma}{C}\, y_n$, which yields the equation
\begin{equation}
\tag{3-2}
y_{n+1}= \displaystyle\frac{r +p\,y_{n}+ y_{n-1}}
 {q\,y_{n}+ y_{n-1}}  \,, \quad n=0,1,2,\dots, \quad   y_{-1},y_{0}\in (0,\infty) 
\end{equation}
where $r=\frac{\alpha\,C}{\gamma^2}$, $p=\frac{\beta}{\gamma}$, and 
$q = \frac{B}{C}$.

The number of parameters of Eq.(3-3) can also be reduced, 
which we proceed to do next.
Consider the following  affine change of variables 
which is helpful to reduce number of parameters and simplify calculations:
\begin{equation}
\label{change of coordinates}
x_n  = \left( \frac{\gamma}{C} + \frac{A}{B+C}\right) \,y_n - \frac{A}{B+C}\, ,
\end{equation}
With (\ref{change of coordinates}), Eqn.(3-3) may now be rewritten as 
  \begin{equation*}
\tag{3-2-L}
  y_{n+1}=\displaystyle \frac{r+p\, y_{n}+y_{n-1}}{q\, y_{n}+y_{n-1}}\, , \quad 
n=0,1,2,\dots, \,\, y_{-1},y_{0} \in [L,\infty) 
\end{equation*}
where
\begin{equation}
\label{eq: pqr}
\begin{array}{rcl}
p &=&  \frac{A\,B + (B+C)\,\beta}{A\,C +(B+C)\,\gamma} \\ \\
q &=&  \frac{B}{C}, \\ \\
r &=&  \frac{C(B+C)(B\,\alpha+C\,\alpha-A\,\beta-A\,\gamma)}{(\gamma\,( B+C)+A\,C)^{2}}, \\ \\
L&=&   \frac{A\,C}{A\,C + (B + C)\,\gamma}
\end{array}
\end{equation}


Theorems \ref{th: 3-3} and \ref{th: 3-2 orig.} can be 
 reformulated in terms of the parameters $p$, $q$ and $r$ as follows.
\begin{theorem}
\label{th: 3-2-L}
Let $\alpha$, $\beta$, $\gamma$, $A$, $B$ and $C$ be positive numbers, and let
$p$, $q$, $r$ and $L$ be given by relations (\ref{eq: pqr}).
Then every solution to Eqn.(3-2-L)
converges to the unique equilibrium  
or to a prime period-two solution. 
\end{theorem}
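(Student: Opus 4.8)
The plan is to establish Theorem~\ref{th: 3-2-L}, from which Theorems~\ref{th: 3-3} and~\ref{th: 3-2 orig.} follow via the changes of variables already displayed. Throughout write $F(s,t)=\dfrac{r+ps+t}{qs+t}$, so that Eqn.(3-2-L) reads $y_{n+1}=F(y_n,y_{n-1})$, and let $T(u,v)=(v,F(v,u))$ be the associated planar map on $[L,\infty)^{2}$. I would begin with three preliminary reductions. First, record that $[L,\infty)^{2}$ is invariant (this is exactly what the cutoff $L$ guarantees, being the image of the forward-invariant set $[0,\infty)^{2}$ of Eqn.(3-3)) and that the equilibrium $\bar y$ is the unique positive root of $(q+1)y^{2}-(p+1)y-r=0$. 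Second, solve the period-two system $\phi=F(\psi,\phi)$, $\psi=F(\phi,\psi)$: subtracting the equations gives $\phi+\psi=1-p$ whenever $\phi\neq\psi$, and adding them then gives $(q-1)\phi\psi=r+p(1-p)$; hence a prime period-two solution can occur only when $p<1$ and $q>1$, and when it occurs it is determined by a single quadratic. Third, prove boundedness and persistence: writing $F(s,t)=\dfrac{r}{qs+t}+\dfrac{ps+t}{qs+t}$ and using $s,t\ge L$, one obtains uniform bounds, so every solution is eventually trapped in a compact interval $[m,M]\subset(0,\infty)$, and it suffices to study $T$ on the invariant rectangle $[m,M]^{2}$.

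The core of the proof is the monotonicity structure of $F$. Differentiating, $\dfrac{\partial F}{\partial s}=\dfrac{(p-q)t-qr}{(qs+t)^{2}}$ and $\dfrac{\partial F}{\partial t}=\dfrac{(q-p)s-r}{(qs+t)^{2}}$, so on $[m,M]^{2}$ one of the two partials keeps a fixed sign while the other can change sign only across a single line. Concretely (taking $r>0$; the case $r\le 0$ is easier, as then $F$ is monotone of a fixed type): when $q=p$ both partials are negative; when $q<p$ we have $\partial F/\partial t<0$ everywhere and $\partial F/\partial s$ changes sign at $t=\tfrac{qr}{p-q}$; when $q>p$ we have $\partial F/\partial s<0$ everywhere and $\partial F/\partial t$ changes sign at $s=\tfrac{r}{q-p}$. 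I would then case on the asymptotic position of the orbit relative to the relevant threshold, using the $\limsup/\liminf$ technique. Put $S=\limsup y_n$, $I=\liminf y_n$. If the orbit eventually lies in the region where $F$ is decreasing in both arguments, then $S\le F(I,I)$ and $I\ge F(S,S)$, which algebraically force $S=I$ and hence $y_n\to\bar y$. If $I$ and $S$ straddle the threshold, the one-sided bounds $S\le F(I,I)$ and $I\ge F(S,I)$ (each valid on the side where the appropriate section of $F$ is monotone in the right direction) combine to give $(S-I)(I+p)\le 0$, again forcing $S=I$; so a genuinely straddling configuration does not occur. The only remaining possibility is that the orbit eventually lies in the region where $F$ is decreasing in $y_n$ and increasing in $y_{n-1}$ (this forces $q>p$, and whenever a period-two solution exists we are automatically here, since it needs $p<1<q$). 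There $T$ is a competitive map on an eventually-invariant rectangle, and I would invoke the theory of planar competitive maps (Smith; de~Mottoni--Schiaffino; Kulenovi\'c--Merino), together with the fact that the only fixed points of $T^{2}$ are $(\bar y,\bar y)$ and, when they exist, the two points built from the period-two pair, to conclude that every orbit converges to $\bar y$ or to the period-two solution.

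The main obstacle is this competitive case: the crude $\limsup/\liminf$ inequalities only locate $S$ and $I$, and passing from them to genuine convergence of the even- and odd-indexed subsequences requires the planar competitive-map machinery. A full-limiting-solution argument handles the book-keeping — a full limiting solution attaining both $S$ and $I$ must, by strict monotonicity of $F$ on the competitive region, be the alternating sequence $\ldots,S,I,S,I,\ldots$, which identifies $\{S,I\}$ with the period-two pair — but the convergence statement itself requires verifying, for the specific rational map $T$, the hypotheses under which a competitive planar map with a unique interior fixed point has every bounded orbit converging to a fixed point of $T$ or $T^{2}$ (injectivity and orientation of $T$, the local character of $(\bar y,\bar y)$, the separating role of its stable manifold when a period-two pair is present), and doing so while controlling the boundary effects of the cutoff $L$ and the degenerate threshold cases ($\liminf y_n$ exactly $\tfrac{r}{q-p}$, or $q=1$). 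Collecting the outcomes over this finite list of parameter configurations yields Theorem~\ref{th: 3-2-L}.
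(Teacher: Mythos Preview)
Your case analysis has a genuine gap: you never treat the regime where $F$ is \emph{increasing} in its first argument and \emph{decreasing} in its second. When $q<p$ and $r>0$ you correctly note that $\partial F/\partial t<0$ throughout and that $\partial F/\partial s$ changes sign at $t=\tfrac{qr}{p-q}$; but then you only allow the orbit to sit in the $(\mathrm{decr},\mathrm{decr})$ region, to straddle, or to sit in the $(\mathrm{decr},\mathrm{incr})$ region, and you assert that the last of these ``forces $q>p$''. That omits the possibility, which does occur, that the orbit eventually lies above the threshold $t=\tfrac{qr}{p-q}$, where $F$ is $(\mathrm{incr},\mathrm{decr})$. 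Your $\limsup/\liminf$ method does not close this case: the relevant system is $S=F(S,I)$, $I=F(I,S)$, and for $p>1$, $q<1$ this system has positive solutions with $S\neq I$ (eliminate to get a quadratic in $S$ with two positive roots), so one cannot conclude $S=I$.

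This $(\mathrm{incr},\mathrm{decr})$ case is precisely the hard part of the paper's proof. The paper first shrinks to an invariant interval on which $F$ is strictly coordinate-wise monotone (Proposition~\ref{prop: invariant and attracting}), and then in the $(\mathrm{incr},\mathrm{decr})$ subcase it distinguishes $r<0$ (handled by a M\"obius change of variables reducing to a Kocic--Ladas theorem) and $r\ge 0$ (Proposition~\ref{prop: f up down r >0 on I}). For $r\ge 0$ the argument is substantial: when $q\ge 1$ or $p\le 1$ the $(m,M)$ system has no nontrivial solution and Theorem~\ref{th: mM first} applies; otherwise one passes to the second iterate and checks a different $(m,M)$ system when $r\le p^{2}q-p$; and in the remaining range $p>1$, $q<1$, $r>p^{2}q-p$ the paper constructs a Lyapunov-type function from the Lyness invariant and verifies (with computer algebra) that it strictly decreases along orbits after one, two, or three steps. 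None of this is captured by your outline. By contrast, your ``competitive'' case $(\mathrm{decr},\mathrm{incr})$, which you flag as the main obstacle, is actually the easy one: bounded orbits of such maps have eventually monotone even/odd subsequences (Theorem~\ref{th: Camouzis-Ladas}), so convergence to the equilibrium or a period-two solution is immediate without the competitive-map machinery.
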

\begin{theorem}
\label{th: 3-2}
Let  
$p$, $q$, $r$ be positive numbers.
Then every solution to Eqn.(3-2)
converges to the unique equilibrium  
or to a prime period-two solution. 
\end{theorem}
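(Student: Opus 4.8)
The equation is $y_{n+1}=f(y_n,y_{n-1})$ with $f(x,y)=\frac{r+p\,x+y}{q\,x+y}$, and the plan is to run a $\liminf/\limsup$ argument organized around the monotonicity of $f$. First I would establish boundedness and persistence: writing $y_{n+1}=\frac{r}{q\,y_n+y_{n-1}}+\frac{p\,y_n+y_{n-1}}{q\,y_n+y_{n-1}}$ gives $y_{n+1}\ge\min\{1,p/q\}=:c>0$ for every $n\ge 0$, and then $y_{n+1}\le\frac{r}{(q+1)c}+\max\{1,p/q\}=:C$ for all large $n$, so the orbit eventually lies in the compact square $[c,C]^2$. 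Set $m=\liminf y_n\le M=\limsup y_n$. The goal is to prove that either $m=M$ --- in which case the common value is the equilibrium $\bar y$ by continuity --- or that $\{m,M\}$ is a prime period-two solution and the orbit in fact converges to it.

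The structural fact to exploit is that $\partial f/\partial x$ has the sign of $(p-q)y-q\,r$ (independent of $x$) and $\partial f/\partial y$ has the sign of $(q-p)x-r$ (independent of $y$). Thus $f$ is strictly monotone in each argument for each fixed value of the other, $f$ is never increasing in both arguments at once, and the monotonicity type changes only across a single line ($x=\tfrac{r}{q-p}$ when $q>p$, or $y=\tfrac{q\,r}{p-q}$ when $p>q$; no change when $p=q$). If on $[m,M]^2$ the map $f$ is non-increasing in both slots --- automatic when $p=q$ --- then letting $n\to\infty$ in the recursion yields $M\le g(m)$ and $m\ge g(M)$, where $g(x):=f(x,x)=\frac{r+(p+1)x}{(q+1)x}$ is strictly decreasing, satisfies $g(\bar y)=\bar y$ and $|g'(\bar y)|<1$, and --- by a one-line computation --- has no two-cycle (a two-cycle would force $\tfrac{2(p+1)}{q+1}=0$); hence $g\circ g$ is increasing with $\bar y$ as its only fixed point, and the two inequalities squeeze $m=M=\bar y$. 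If instead $[m,M]^2$ straddles the monotonicity line, I would first fix the slot in doubt at $m$ (resp. $M$) --- legitimate since $f$ stays strictly monotone in the other slot over the relevant range --- to obtain inequalities that reduce, after brief algebra, to $(m-M)(m+p)\ge 0$ or $(m-M)(q\,m+1)\ge 0$, again forcing $m=M=\bar y$. (This is exactly where prime period-two solutions get excluded in those configurations.)

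There remain the two genuinely mixed-monotone regimes on $[m,M]^2$. When $f$ is non-increasing in $y_n$ and non-decreasing in $y_{n-1}$ --- which requires $q>p$ and is the only regime admitting prime period-two solutions (they force $p<1<q$, and one checks the period-two pair $\{\phi,\psi\}$ indeed satisfies $\min\{\phi,\psi\}>\tfrac{r}{q-p}$, so it lives in this regime) --- I would pass to the map $T(u,v)=(v,f(v,u))$ on the invariant square. A direct computation shows that in this regime the Jacobian of $T^{2}$ has nonnegative diagonal and nonpositive off-diagonal entries, i.e. $T^{2}$ is a competitive planar map; the classical dichotomy for such maps (every bounded orbit is eventually componentwise monotone, hence converges to a fixed point) then forces the $T^{2}$-orbit to converge to a fixed point of $T^{2}$, which is either $(\bar y,\bar y)$ or a period-two point $(\phi,\psi),(\psi,\phi)$ of $T$; translating back, $y_n\to\bar y$ or $y_n$ converges to the period-two solution $\{\phi,\psi\}$.

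The last regime --- $f$ non-decreasing in $y_n$ and non-increasing in $y_{n-1}$, which forces $p>q$ and hence admits no period-two solution --- is where I expect the real difficulty. Here $T^{2}$ is \emph{not} competitive (its Jacobian acquires a strictly negative diagonal entry), and the crude $\liminf/\limsup$ iteration need not collapse to a point: its limiting system $\{a=f(a,b),\ b=f(b,a)\}$ can have a genuine non-diagonal solution when $q<1$ and $p$ is large, even though such a pair is \emph{not} a period-two solution of the equation. Proving that $\bar y$ nonetheless attracts every solution in this regime --- presumably via a sharper iteration that tracks which corner is extremal, together with showing that the spurious non-diagonal solutions of the limiting system cannot occur for the parameter values that actually reach it, or via a substitution that restores a monotone structure --- and then assembling all of the above into one airtight case analysis over $(p,q,r)$-space, is the main obstacle I foresee.
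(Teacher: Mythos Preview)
Your overall decomposition is close to the paper's, and you correctly identify the crux: the regime where $f$ is increasing in $x$ and decreasing in $y$ (forced by $p>q$) is the hard case, and the naive $m$--$M$ system $a=f(a,b)$, $b=f(b,a)$ can indeed have off-diagonal solutions there. But you stop precisely at that point, calling it ``the main obstacle I foresee''; that is the gap. The paper's resolution is substantial and not something a sharper $\liminf/\limsup$ bookkeeping alone would produce. It further splits this regime into three sub-cases: (i) if $q\ge 1$ or $p\le 1$, the $m$--$M$ system has no off-diagonal solution and Theorem~\ref{th: mM first}(iv) applies; (ii) if $r\le p^2q-p$, one embeds the equation into the third-order recursion $x_{n+1}=\hat f(x_n,x_{n-1},x_{n-2})$ obtained by substituting $x_n=f(x_{n-1},x_{n-2})$, shows $\hat f$ is nonincreasing in all three variables on the invariant box, and applies Theorem~\ref{th: Mm 3-3}; (iii) in the remaining range $p>1$, $q<1$, $r>p^2q-p$, one rescales to make the equation look like a perturbed Lyness equation and uses the Lyness invariant
\[
g(x,y)=\frac{(1+x)(1+y)(u^2-u+x+y)}{xy}
\]
(with $u$ the equilibrium after rescaling) as a Lyapunov-like function, proving $g(T(x,y))<g(x,y)$ on two quadrants and $g(T^2(x,y))<g(x,y)$ or $g(T^3(x,y))<g(x,y)$ on the other two, the latter via lengthy computer-algebra verifications. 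This, together with local asymptotic stability of the equilibrium, forces global attractivity. None of this is hinted at in your outline, and it is the heart of the theorem.

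Two smaller remarks. First, your ``straddling'' argument is too quick: fixing one slot at $m$ or $M$ in the $\liminf/\limsup$ inequalities is not obviously legitimate when the monotonicity in that slot flips across the box, and the claimed reductions to $(m-M)(m+p)\ge 0$ etc.\ need justification. The paper instead builds a nested sequence of invariant intervals $[m_\ell,M_\ell]$ and shows (Lemma~\ref{lemma: m < phi}) that as long as the interval straddles the critical line, it shrinks strictly; hence either everything collapses to $\bar y$, or one eventually lands in an interval where $f$ is coordinatewise strictly monotone. Second, for the $q>p$ (decreasing in $x$, increasing in $y$) regime the paper simply invokes the Camouzis--Ladas dichotomy (even/odd subsequences are eventually monotone), which is equivalent in spirit to your competitive-map argument for $T^2$; either route is fine there.
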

In this paper we prove Theorems \ref{th: 3-2-L} and \ref{th: 3-2};
Theorems \ref{th: 3-3} and  \ref{th: 3-2 orig.} follow 
as an immediate corollary.

The two main differences between Eq.(3-2-L) and Eq.(3-2) are
the set of initial conditions, and the possibility of having 
a negative value of $r$ in Eq.(3-2-L), while only positive values 
of $r$ are allowed in Eq.(3-2).
Nevertheless,  for both Eq.(3-2-L) and Eq.(3-2) the unique equilibrium has the  formula:
\begin{equation*}
\overline{y} = \frac{p+1 + \sqrt{(p+1)^2+ 4\,r\,(q+1)}}{2\,(q+1)}
\end{equation*}
Although it is not possible to prove Theorem \ref{th: 3-3} as a simple corollary to Theorem \ref{th: 3-2 orig.}, 
the changes of variables leading to Theorems \ref{th: 3-2-L} and \ref{th: 3-2} will result in proofs
to the former theorems that are greatly simplified.
\medskip

Our main results Theorem \ref{th: 3-3} and  Theorem \ref{th: 3-2 orig.} 
imply that when prime period-two solutions to Eq.(3-3) or Eq.(3-2)
do not exist, then the unique equilibrium is a global attractor.
We have not treated here certain questions about the global dynamics of Eq.(3-3) and Eq.(3-2),
such as the character of the prime period-two solutions to either equation, 
or even for more general rational second order equations, when 
such solutions exist.  This matter will be treated in an upcoming article of the authors \cite{BasuMerinoP2}.
\medskip

This work is organized as follows.
The main results are stated in Section \ref{sec: Introduction}.
Results from the literature which are used here are given in
Section \ref{sec: Results from the literature} for convenience.
In Section \ref{section: Existence}, it is shown that 
either every solution to Eq.(3-2-L) converges to the equilibrium,
or
there exists an invariant and attracting interval $I$ with the property
that the function $f(x,y)$ associated with the difference equation
is coordinate-wise strictly-monotonic on $I\times I$. 
In Section \ref{sec: The Equation (3-2)}, a global convergence result is obtained for 
Eq.(3-2) over a specific range of parameters and for 
initial conditions in an invariant compact interval.
Theorem \ref{th: 3-2-L} is proved in Section \ref{sec: Proof 3-2-L},
and the proof of Theorem \ref{th: 3-2} is given in Section \ref{sec: Proof 3-2}.
Section \ref{appendix: computer 1} includes computer algebra system code 
for performing certain calculations that involve
polynomials with a large number of terms (over 365,000 in one case).
These computer calculations are used to support certain statements in Section \ref{sec: The Equation (3-2)}.
Finally, we refer the reader to \cite{KuL01} for terminology and definitions
that concern difference equations.

\section{Results from the literature}
\label{sec: Results from the literature}
The results in this subsection are from the literature,
and they are given here for easy reference.
The first result is a reformulation of Theorems (1.4.5) --- (1.4.8) in \cite{KuL01}.
\begin{theorem}[ \cite{KLS}, \cite{KuL01}]
\label{th: mM first}
Suppose a continuous function $f : [a,b]^2 \rightarrow [a,b]$  satisfies one of i.--iv.:
 \begin{itemize}
 \item[i.] $f(x,y)$ is nondecreasing in $x$, $y$, and 
 $$\forall (m,M) \in [a,b]^2, \quad (\, f(m,m)=m\ \&\ f(M,M)=M\, ) \implies m=M
 $$ 
 \item[ii.] $f(x,y)$ is nonincreasing in $x$, $y$, and 
 $$\forall (m,M) \in [a,b]^2, \quad (\, f(m,m)=M\ \&\ f(M,M)=m\, ) \implies m=M
 $$ 
  \item[iii.] $f(x,y)$ is nonincreasing in $x$ and nondecreasing in $y$, and 
 $$\forall (m,M) \in [a,b]^2, \quad (\, f(m,M)=M\ \&\ f(M,m)=m\, ) \implies m=M
 $$ 
 \item[iv.] $f(x,y)$ is nondecreasing in $x$ and nonincreasing in $y$, and 
 $$\forall (m,M) \in [a,b]^2, \quad (\, f(M,m)=M\ \&\ f(m,M)=m\, ) \implies m=M
 $$ 
\end{itemize}
Then \quad $y_{n+1} = f(y_n,y_{n-1})$ 
 has a unique equilibrium in $ [a,b]$,   
 and every solution with initial values in $[a,b]$ converges to the equilibrium.
 \end{theorem}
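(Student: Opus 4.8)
The plan is to run the classical ``$M$--$m$'' iteration argument of Kulenovi\'c and Ladas, treating case i.\ in full and then indicating how the other three cases follow from the same device with the endpoints fed into $f$ in a permuted order. For case i.\ set $m_0=a$, $M_0=b$ and define recursively
\[
m_{i+1}=f(m_i,m_i),\qquad M_{i+1}=f(M_i,M_i),\qquad i=0,1,2,\dots
\]
Since $f$ maps $[a,b]^2$ into $[a,b]$ we have $m_1\ge a=m_0$ and $M_1\le b=M_0$, and $m_1=f(a,a)\le f(b,b)=M_1$ because $f$ is nondecreasing in both variables. An induction using monotonicity then gives
\[
a=m_0\le m_1\le\cdots\le m_i\le M_i\le\cdots\le M_1\le M_0=b,
\]
so $\{m_i\}$ increases to a limit $m$, $\{M_i\}$ decreases to a limit $M$, and $m\le M$. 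Passing to the limit in the recursion and using continuity of $f$ yields $f(m,m)=m$ and $f(M,M)=M$; the hypothesis in i.\ then forces $m=M=:\bar y$, which in particular is an equilibrium.

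Next I would establish uniqueness of the equilibrium in $[a,b]$: if $f(\hat y,\hat y)=\hat y$ with $\hat y\in[a,b]$, then an induction using monotonicity and the identity $f(\hat y,\hat y)=\hat y$ shows $m_i\le\hat y\le M_i$ for all $i$, whence $\hat y=m=M=\bar y$.

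The third ingredient is a squeezing step. Given a solution $\{y_n\}$ with $y_{-1},y_0\in[a,b]$, I claim that for every $i$ there is an index $N_i$ with $m_i\le y_n\le M_i$ for all $n\ge N_i$; this holds for $i=0$ with $N_0=-1$. Assuming it for $i$, then for $n\ge N_i+1$ both $y_n$ and $y_{n-1}$ lie in $[m_i,M_i]$, so monotonicity gives $m_{i+1}=f(m_i,m_i)\le f(y_n,y_{n-1})=y_{n+1}\le f(M_i,M_i)=M_{i+1}$, so $N_{i+1}=N_i+2$ works. Hence $m_i\le\liminf_n y_n\le\limsup_n y_n\le M_i$ for every $i$, and letting $i\to\infty$ gives $\lim_n y_n=\bar y$.

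For the remaining cases only the definitions of the auxiliary sequences change: in case ii.\ take $m_{i+1}=f(M_i,M_i)$, $M_{i+1}=f(m_i,m_i)$; in case iii.\ take $m_{i+1}=f(M_i,m_i)$, $M_{i+1}=f(m_i,M_i)$; in case iv.\ take $m_{i+1}=f(m_i,M_i)$, $M_{i+1}=f(M_i,m_i)$. With these choices the three steps go through verbatim: the nesting $m_i\uparrow m\le M\downarrow M_i$ still follows from the appropriate (nonincreasing or mixed) monotonicity, the limit identities obtained from the recursion are exactly $\bigl(f(m,m)=M,\ f(M,M)=m\bigr)$, $\bigl(f(m,M)=M,\ f(M,m)=m\bigr)$, $\bigl(f(M,m)=M,\ f(m,M)=m\bigr)$ respectively, which match the hypotheses and force $m=M$, and the trapping estimate bounds $f(y_n,y_{n-1})$ between $m_{i+1}$ and $M_{i+1}$ using the same two monotonicity inequalities. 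I expect the only real obstacle to be bookkeeping in the mixed-monotonicity cases iii.\ and iv.: one must feed the correct one of $m_i$, $M_i$ into each slot of $f$ so that the two sequences stay monotone and correctly ordered and so that the squeeze runs in the right direction; once that pairing is fixed, every step is routine.
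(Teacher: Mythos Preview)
Your argument is correct and is precisely the classical $M$--$m$ iteration of Kulenovi\'c--Ladas. Note that the paper itself does not prove this theorem: it is quoted in Section~\ref{sec: Results from the literature} as a reformulation of Theorems~(1.4.5)--(1.4.8) of \cite{KuL01}, so there is no in-paper proof to compare against, but your write-up reproduces exactly the argument given in those references.
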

 
 The following result is Theorem A.0.8 in \cite{KuL01}.
  \begin{theorem}
 \label{th: Mm 3-3}
 Suppose a continuous function $f : [a,b]^3 \rightarrow [a,b]$    
 is nonincreasing in all variables, and  
 $$
 \forall (m,M) \in [a,b]^3, \quad (\, f(m,m,m)=M\ \&\ f(m,m,m)=M\, ) \implies m=M
 $$ 
Then \quad $y_{n+1} = f(y_n,y_{n-1},y_{n-2})$ 
 has a unique equilibrium in $ [a,b]$,   
 and every solution with initial values in $[a,b]$ converges to the equilibrium.
 \end{theorem}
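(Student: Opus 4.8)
The plan is to run the standard monotone ``squeeze'' argument for order-reversing maps, now in the three-variable setting. Fix a solution $\{y_n\}_{n\ge -2}$ with $y_{-2},y_{-1},y_0\in[a,b]$; since $f$ maps $[a,b]^3$ into $[a,b]$, the whole orbit stays in $[a,b]$. Rather than work with $\liminf$ and $\limsup$ directly, I would introduce scalar sequences $\{m_k\}$, $\{M_k\}$ by
\[
m_0=a,\qquad M_0=b,\qquad m_{k+1}=f(M_k,M_k,M_k),\qquad M_{k+1}=f(m_k,m_k,m_k),
\]
and use them to trap the orbit, exactly as in the two-variable result of Theorem \ref{th: mM first}; here $m_k$ is a running lower bound and $M_k$ a running upper bound.

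The first step is to verify, by induction on $k$, the chain $a\le m_k\le m_{k+1}\le M_{k+1}\le M_k\le b$. The outer inequalities hold because $f$ has range in $[a,b]$; the inequality $m_k\le M_k$ propagates from $m_0\le M_0$ (so that $m_{k+1}=f(M_k,M_k,M_k)\le f(m_k,m_k,m_k)=M_{k+1}$, using that $f$ is nonincreasing in each coordinate); and $m_{k+1}\ge m_k$, $M_{k+1}\le M_k$ follow from $M_k\le M_{k-1}$, $m_k\ge m_{k-1}$ together with the monotonicity of $f$. Hence $m_k\uparrow\mu$ and $M_k\downarrow\Lambda$ for some $\mu\le\Lambda$ in $[a,b]$, and continuity of $f$ gives $\mu=f(\Lambda,\Lambda,\Lambda)$ and $\Lambda=f(\mu,\mu,\mu)$. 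The hypothesis, applied to the pair $(\mu,\Lambda)$, now forces $\mu=\Lambda=:\bar y$; in particular $\bar y=f(\bar y,\bar y,\bar y)$, so $\bar y$ is an equilibrium.

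It remains to squeeze the orbit between the two sequences. I would show by induction on $k$ that there is an index $N_k$ with $m_k\le y_n\le M_k$ for all $n\ge N_k$: for $k=0$ this is the invariance already noted (with $N_0=-2$), and if the bound holds from $N_k$ on, then for $n\ge N_k+2$ the three inputs $y_n,y_{n-1},y_{n-2}$ lie in $[m_k,M_k]$, whence $m_{k+1}=f(M_k,M_k,M_k)\le y_{n+1}\le f(m_k,m_k,m_k)=M_{k+1}$, so $N_{k+1}=N_k+3$ works. Therefore $m_k\le\liminf_n y_n\le\limsup_n y_n\le M_k$ for every $k$, and letting $k\to\infty$ gives $y_n\to\bar y$. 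Uniqueness of the equilibrium is then automatic: $\bar y$ was built from $a$, $b$, $f$ alone, independently of the solution, so applying the convergence to the constant solution at an arbitrary equilibrium $\bar z\in[a,b]$ yields $\bar z=\bar y$.

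I do not expect a genuine obstacle here; the argument is a routine adaptation of the second-order $M$--$m$ theorem. The only points requiring attention are that the trapping sequences must be generated by iterating $f$ on the diagonal rather than by iterating the difference equation itself, and that in the orbit-squeeze induction the base index is pushed forward by the delay (by $3$) at each stage, which is harmless since $k$ is fixed before letting $n\to\infty$. I am reading the hypothesis in its evidently intended form, namely that $f(m,m,m)=M$ and $f(M,M,M)=m$ with $m,M\in[a,b]$ force $m=M$.
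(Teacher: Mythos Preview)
Your argument is correct and is precisely the standard $M$--$m$ squeeze proof for this class of results. Note, however, that the paper does not actually prove this theorem: it is quoted from the literature (it is Theorem~A.0.8 in \cite{KuL01}) and stated in Section~\ref{sec: Results from the literature} for reference only, with no proof supplied. What you have written is essentially the proof one finds in the cited source, so there is nothing to compare against within the paper itself; your reading of the evidently intended hypothesis ($f(m,m,m)=M$ and $f(M,M,M)=m$ force $m=M$) is the correct one.
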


\begin{theorem}[\cite{Camouzis Ladas}]
\label{th: Camouzis-Ladas}
Let $I$ be a set of real numbers and let $F:I\times I \rightarrow I$ be a   function
$F(u,v)$ which decreases in $u$ and increases in $v$.
Then for every solution $\{x_n\}_{n=-1}^\infty$ of the equation
\begin{equation}
\label{eq: Camouzis-ladas}
x_{n+1} = F(x_n,x_{n-1}),\quad n=0,1,\ldots
\end{equation}
the subsequences $\{x_{2n}\}$ and $\{x_{2n+1}\}$ of even and odd terms 
do exactly one of the following:
\begin{itemize}
\item[\rm (i)] They are both monotonically increasing.
\item[\rm (ii)] They are both monotonically decreasing.
\item[\rm (iii)]  Eventually, one of them is monotonically increasing 
and the other is monotonically decreasing.
\end{itemize}
\end{theorem}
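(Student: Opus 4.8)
Here is a plan for proving Theorem \ref{th: Camouzis-Ladas}.

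The plan is to isolate a single ``locking'' phenomenon. Once a mismatched pair of two-step comparisons occurs --- one of $\{x_{2n}\}$, $\{x_{2n+1}\}$ momentarily going up while the other goes down --- the two subsequences are forced into opposite monotone behaviour from that point on; and if no such mismatch ever occurs, both subsequences are monotone in the same direction from the start. That dichotomy is exactly the trichotomy (i)--(iii).

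The only property of $F$ used is the obvious one: if $u_1\le u_2$ and $v_1\ge v_2$ then $F(u_1,v_1)\ge F(u_2,v_2)$, and symmetrically with all inequalities reversed. Applying this to the relation $x_{m+1}=F(x_m,x_{m-1})$ at two indices of the same parity, I would first record the propagation step: for every $n\ge 0$, if $x_{n+1}\le x_{n-1}$ and $x_{n+2}\ge x_n$, then $x_{n+3}=F(x_{n+2},x_{n+1})\le F(x_n,x_{n-1})=x_{n+1}$ and then $x_{n+4}=F(x_{n+3},x_{n+2})\ge F(x_{n+1},x_n)=x_{n+2}$; every instance of the recurrence invoked here has index $\ge 0$, so this is legitimate. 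Iterating (induction on $j$) gives the Main Lemma: under the same hypothesis, $x_{n+2j+1}\le x_{n+2j-1}$ and $x_{n+2j+2}\ge x_{n+2j}$ for all $j\ge 0$, so the parity class of $x_{n-1}$ is eventually nonincreasing and that of $x_n$ eventually nondecreasing --- outcome (iii). The mirror hypothesis $x_{n+1}\ge x_{n-1}$, $x_{n+2}\le x_n$ gives (iii) with the roles exchanged.

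It then remains to treat the case in which, for \emph{every} $n\ge 0$, neither hypothesis of the Main Lemma holds. Writing $c_m$ for the sign of $x_m-x_{m-2}$ (for $m\ge 1$), a short truth-table check shows that the simultaneous failure of the two hypotheses at index $n$ is equivalent to: $c_{n+1}\neq 0$ and $c_{n+1},c_{n+2}$ have the same strict sign. Imposing this for all $n\ge 0$ forces $c_1,c_2,c_3,\dots$ to be nonzero with equal consecutive signs, hence all of one sign; if all $c_m>0$ then $x_{-1}<x_1<x_3<\cdots$ and $x_0<x_2<x_4<\cdots$ (outcome (i)), and if all $c_m<0$ we get (ii). Since (i), (ii), (iii) are pairwise incompatible, the trichotomy follows.

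I expect the only genuine work to be seeing \emph{what} to prove in the Main Lemma --- recognizing the mismatched-comparison configuration as the right invariant; after that the induction and the final case split are routine. Two minor points would need care: invoking $x_{m+1}=F(x_m,x_{m-1})$ only for $m\ge 0$ (so that the propagation step is already valid at $n=0$), and the harmless degenerate situation in which a subsequence becomes constant, where ``increasing''/``decreasing'' must be read in the weak sense.
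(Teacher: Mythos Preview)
The paper does not actually prove Theorem~\ref{th: Camouzis-Ladas}; it is quoted in Section~\ref{sec: Results from the literature} as a result from the literature (Camouzis--Ladas) and used later without proof. So there is no ``paper's own proof'' to compare against.

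Your argument is correct and is essentially the standard one. The propagation step is right: with $F$ decreasing in the first variable and increasing in the second, the configuration $x_{n+1}\le x_{n-1}$, $x_{n+2}\ge x_n$ reproduces itself two indices later, which locks the two parity classes into opposite weak monotonicity from index $n$ on (and similarly for the mirror configuration). Your truth-table analysis of the complementary case is also accurate: simultaneous failure of both locking hypotheses at index $n$ forces $c_{n+1}$ and $c_{n+2}$ to be nonzero of the same sign, and chaining over $n\ge 0$ gives strict monotonicity of both subsequences in the same direction from the outset.

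One small caveat you already flagged: the phrase ``exactly one'' in the theorem must be read with weak monotonicity, since a constant solution (or an eventually constant subsequence) simultaneously satisfies (i) and (ii), and also fits (iii). This is a feature of the original statement rather than a defect of your proof, but it is worth saying explicitly when you write it up.
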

Theorem \ref{th: Camouzis-Ladas} has this corollary.
\begin{corollary}[\cite{Camouzis Ladas}]
If $I$ is a compact interval, then every solution of Eq.(\ref{eq: Camouzis-ladas})
converges to an equilibrium or to a prime period-two solution.
\end{corollary}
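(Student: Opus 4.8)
The plan is to deduce this directly from Theorem \ref{th: Camouzis-Ladas}. Write $I=[a,b]$ and fix a solution $\{x_n\}_{n=-1}^\infty$ of Eq.(\ref{eq: Camouzis-ladas}). By Theorem \ref{th: Camouzis-Ladas}, in each of the three alternatives (i)--(iii) both of the subsequences $\{x_{2n}\}$ and $\{x_{2n+1}\}$ are eventually monotone. Since $I$ is a compact interval it is closed and bounded, so every eventually monotone sequence with values in $I$ converges to a point of $I$; denote the two limits by $\phi := \lim_{n\to\infty} x_{2n}\in I$ and $\psi := \lim_{n\to\infty} x_{2n+1}\in I$.

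Next I would pass to the limit in the recurrence. Along even indices $n=2k$ we have $x_{2k+1}=F(x_{2k},x_{2k-1})$, and along odd indices $n=2k+1$ we have $x_{2k+2}=F(x_{2k+1},x_{2k})$. Letting $k\to\infty$ and using continuity of $F$ gives $\psi=F(\phi,\psi)$ and $\phi=F(\psi,\phi)$, so the pair $(\phi,\psi)$ generates the (not necessarily prime) period-two sequence $\ldots,\phi,\psi,\phi,\psi,\ldots$, which is a solution of Eq.(\ref{eq: Camouzis-ladas}).

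It then remains to split into two cases. If $\phi=\psi=:\overline{x}$, then $\overline{x}=F(\overline{x},\overline{x})$ is an equilibrium and the whole sequence $\{x_n\}$ converges to it, since both its even and odd subsequences do. If $\phi\neq\psi$, then the period-two solution just produced cannot have period one and is therefore a prime period-two solution, to which $\{x_n\}$ converges in the usual sense that $x_{2n}\to\phi$ and $x_{2n+1}\to\psi$. In either case the asserted dichotomy holds.

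There is essentially no hard step here: the argument is just the observation that an eventually monotone bounded sequence converges, applied to the trichotomy of Theorem \ref{th: Camouzis-Ladas}, followed by taking limits in the equation. The only point deserving care is that passing the limit through $F$ uses continuity of $F$; without it the subsequential limits need not form a period-two solution, so continuity should be understood as part of the hypotheses (as it is in \cite{Camouzis Ladas}, where $F$ is the continuous right-hand side of a rational difference equation).
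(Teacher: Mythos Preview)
Your proof is correct and is the standard argument; the paper itself does not supply a proof for this corollary but simply cites it from \cite{Camouzis Ladas}. Your remark that continuity of $F$ is needed to pass to the limit is apt and is indeed implicit in the source, where $F$ is the (continuous) right-hand side of a rational difference equation.
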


\begin{theorem}[\cite{Kocic Ladas}]
\label{th: Kocic-Ladas}
Assume the following conditions hold:
\begin{itemize}
\item[\rm (i)] $h \in C[(0,\infty)\times(0,\infty),(0,\infty)]$.
\item[\rm (ii)] $h(x,y)$ is decreasing in $x$ and strictly decreasing in $y$.
\item[\rm (iii)] $x \, h(x,x)$ is strictly increasing in $x$.
\item[\rm (iv)] The equation 
\begin{equation}
\label{eq: kocic-ladas}
x_{n+1} = x_n\,h(x_n,x_{n-1}),\quad n=0,1,\ldots
\end{equation}
has a unique positive equilibrium $\overline{x}$.
\end{itemize}
Then $\overline{x}$ is a global attractor of all positive solutions of Eq.(\ref{eq: kocic-ladas}).
\end{theorem}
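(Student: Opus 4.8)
The plan is to prove that every positive solution $\{x_n\}$ of Eq.(\ref{eq: kocic-ladas}) satisfies $\liminf_n x_n=\limsup_n x_n=\overline{x}$. Set $g(x):=x\,h(x,x)$. From (iv) and the equilibrium identity $\overline{x}=\overline{x}\,h(\overline{x},\overline{x})$ we get $h(\overline{x},\overline{x})=1$ and $g(\overline{x})=\overline{x}$; by (ii) the diagonal map $x\mapsto h(x,x)$ is strictly decreasing, and by (iii) the map $g$ is strictly increasing, so $g(x)>\overline{x}\iff x>\overline{x}$ and $h(x,x)>1\iff x<\overline{x}$. The structural facts I would set up first are the two monotonicity inequalities that come from $h$ being decreasing in its second slot: $x_{n+1}=x_n\,h(x_n,x_{n-1})\ge x_n\,h(x_n,x_n)=g(x_n)$ whenever $x_{n-1}\le x_n$, and $x_{n+1}\le g(x_n)$ whenever $x_{n-1}\ge x_n$. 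Combined with $g(\overline{x})=\overline{x}$ and $h(\overline{x},\overline{x})=1$, these give, for instance, that $x_n\ge\overline{x}\ge x_{n-1}$ forces $x_{n+1}\ge\overline{x}$, that $x_n\le\overline{x}\le x_{n-1}$ forces $x_{n+1}\le\overline{x}$, and that a pair of consecutive terms both on the same side of $\overline{x}$ is followed by a term that does not move farther from that side. Note that one cannot bypass this bookkeeping by invoking an invariant-interval criterion such as Theorem~\ref{th: mM first} or the even/odd-subsequence result Theorem~\ref{th: Camouzis-Ladas}: the map $f(x,y)=x\,h(x,y)$ is in general \emph{not} monotone in its first variable — it is the product of the increasing factor $x$ with a function decreasing in $x$ — and hypothesis (iii) is precisely the surrogate that compensates for this.

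Second, I would show that every positive solution is bounded and persistent, i.e.\ $0<\liminf_n x_n\le\limsup_n x_n<\infty$. The idea is to run the inequalities above along running maxima and minima: at an index $n_k$ where $x_{n_k}=\max\{x_{-1},\dots,x_{n_k}\}$ one has $x_{n_k-1}\le x_{n_k}$, so $h(x_{n_k-1},x_{n_k-2})=x_{n_k}/x_{n_k-1}\ge 1=h(\overline{x},\overline{x})$, which — because $h$ is decreasing with diagonal values strictly below $1$ past $\overline{x}$ — forces at least one of $x_{n_k-1},x_{n_k-2}$ to be at most $\overline{x}$; feeding this back into $x_{n_k}=x_{n_k-1}\,h(x_{n_k-1},x_{n_k-2})$ and chaining with the analogous statement at running minima caps the growth and bounds the decay, ruling out both $\limsup x_n=+\infty$ and $\liminf x_n=0$. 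I expect this boundedness/persistence step to be the main obstacle, precisely because $f$ lacks monotonicity in its first variable and the required estimates must be squeezed out of (iii).

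Third, with $\ell:=\liminf_n x_n$ and $S:=\limsup_n x_n$ now finite and positive, I would take $\limsup$ and $\liminf$ through the equation: for $\varepsilon>0$ and $n$ large, $\ell-\varepsilon<x_n<S+\varepsilon$, so $x_{n+1}\le(S+\varepsilon)\,h(\ell-\varepsilon,\ell-\varepsilon)$ and $x_{n+1}\ge(\ell-\varepsilon)\,h(S+\varepsilon,S+\varepsilon)$; letting $\varepsilon\to0$ and using continuity of $h$ gives $S\le S\,h(\ell,\ell)$ and $\ell\ge\ell\,h(S,S)$, hence $h(\ell,\ell)\ge 1\ge h(S,S)$, hence $\ell\le\overline{x}\le S$ by strict monotonicity of $x\mapsto h(x,x)$. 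To close the gap, suppose $S>\overline{x}$. A diagonal extraction along a subsequence with $x_{n_k}\to S$ yields a bi-infinite full limiting sequence $\{L_i\}_{i\in\mathbb{Z}}$ with $L_i\in[\ell,S]$ for every $i$, $L_0=S$, and $L_{i+1}=L_i\,h(L_i,L_{i-1})$ for every $i$. From $S=L_{-1}\,h(L_{-1},L_{-2})$ and $L_{-1}\le S$ we get $h(L_{-1},L_{-2})\ge 1$; then the monotonicity inequalities of the first step, together with the strict monotonicity of $g$ and the equilibrium identities, propagate backward along $L_{-2},L_{-3},\dots$ and eventually force a term of $\{L_i\}$ strictly below $\ell$, contradicting $L_i\in[\ell,S]$. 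Hence $S=\overline{x}$, and the mirror-image argument starting from a full limiting sequence with $L_0=\ell$ gives $\ell=\overline{x}$; therefore $x_n\to\overline{x}$, i.e.\ $\overline{x}$ is a global attractor.

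In short, the skeleton is: (1) the equilibrium identities and the two $g$-comparison inequalities; (2) boundedness and persistence; (3) the $\limsup/\liminf$ squeeze to $\ell\le\overline{x}\le S$; and (4) a full-limiting-sequence argument closing $\ell=\overline{x}=S$. The hard parts are (2) and the backward-propagation step inside (4) — in both of these, hypothesis (iii) has to carry the weight that monotonicity of $f$ in its first argument would otherwise carry — while the remaining pieces are the routine calculus of second-order difference equations.
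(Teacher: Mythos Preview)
The paper does not prove Theorem~\ref{th: Kocic-Ladas}. It appears in Section~\ref{sec: Results from the literature} (``Results from the literature'') as a quoted result from Kocic and Ladas's monograph, stated for easy reference and used as a black box in the proof of Lemma~\ref{lemma: f up down ybar is GA}; no argument for it is given anywhere in the paper. So there is nothing in the paper to compare your proposal against.

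As a standalone sketch, your outline --- equilibrium identities and $g$-comparison, then boundedness/persistence, then the $\limsup/\liminf$ sandwich $\ell\le\overline{x}\le S$, then a full-limiting-sequence contradiction --- is the standard architecture of the original Kocic--Ladas proof, and steps (1) and (3) are carried out correctly. You are also right to flag (2) and the backward-propagation part of (4) as the real work: in (2) your running-max/min argument is not yet an argument (``chaining \ldots\ caps the growth and bounds the decay'' is a hope, not a proof), and in (4) the assertion that backward iteration along $\{L_i\}$ ``eventually forces a term strictly below $\ell$'' needs a concrete mechanism --- in the original proof this is where the strict monotonicity of $g$ in (iii) and the strict decrease of $h$ in its second slot are combined carefully to produce a strict inequality that propagates. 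If you want to complete the proof rather than sketch it, those two places are where the actual content lives; the rest is scaffolding.
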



\section{Existence of an Invariant And Attracting Interval}
\label{section: Existence}
In this section we prove a proposition which is key for later developments.  
We will need the function 
\begin{equation}
\label{eqn: function f}
   f\,(\,x\,,\,y \,)\,:=\,\displaystyle \frac{r+p\, x+y }{q\, x+y},
   \quad x,y \in [L,\infty)
\end{equation}
associated to Eq.(3-2-L).
\begin{proposition}
\label{prop: invariant and attracting}
At least one of the following statements is true:
\begin{itemize}
\item[\rm (A)]  
Every solution to (3-2-L) converges to the  equilibrium.
\item[\rm (B)]   
There exist  $m^*$, $M^*$ with \  $L\, < m^*< M^*$ s.t. 
\begin{itemize}
\item[\rm (i)]  $[m^{*},M^{*}]$ is an invariant interval for Eq.(3-2-L), i.e., 
$f([m^{*},M^{*}]\times [m^{*},M^{*}]) \subset [m^{*},M^{*}]$.
\item[\rm (ii)] Every solution  to Eq.(3-2-L) eventually enters $[m^{*},M^{*}]$.
\item[\rm (iii)] $f(x,y)$ is coordinate-wise strictly monotonic on $[m^{*},M^{*}]^{2}$.
\end{itemize}
\end{itemize}
\end{proposition}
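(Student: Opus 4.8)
The plan is to analyze the partial derivatives of $f(x,y) = \frac{r+px+y}{qx+y}$ on $[L,\infty)^2$ and to use them both to locate regions of strict monotonicity and to drive an oscillation/trapping argument. A direct computation gives
\[
f_x(x,y) = \frac{(pq-q)y + (p-1)\,r \cdot 0 + \dots}{(qx+y)^2}, \qquad
f_y(x,y) = \frac{(q-p)x - r}{(qx+y)^2},
\]
so that (after simplifying) the sign of $f_x$ is the sign of $(p-1)y - q r$ — more precisely a linear expression in $x,y$ — and the sign of $f_y$ is the sign of $(q-p)x - r$. The key structural fact is that each of $f_x$ and $f_y$ changes sign only across a straight line in the $(x,y)$-plane, and in particular each is of one fixed sign on any sufficiently small neighborhood of the diagonal point $(\bar y,\bar y)$ unless that point lies exactly on the corresponding nodal line. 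I would first dispose of the degenerate subcase where the equilibrium lies on one of these nodal lines (i.e. $f_x$ or $f_y$ vanishes at $(\bar y,\bar y)$): there one can check directly, using Theorem \ref{th: mM first} or an ad hoc estimate, that (A) holds, i.e. every solution converges to $\bar y$. This handles the boundary case and lets us assume henceforth that $f_x(\bar y,\bar y)$ and $f_y(\bar y,\bar y)$ are both nonzero.

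Next, assuming (A) fails, I would extract a genuinely non-convergent solution and show its terms are eventually bounded away from $L$ and from $\infty$, producing explicit bounds $L < m_0 < M_0$. Boundedness above is easy: $f(x,y) \le \frac{r + p x + y}{q x + y} \le \max\{p,1\} + \frac{r}{(q+1)L}$ for $x,y \ge L$, so all solutions are eventually bounded above by an explicit constant $M_0$. For the lower bound, one uses that $f(x,y) \ge \frac{r + \min\{p,1\}\min\{x,y\}}{(q+1)\max\{x,y\}}$ together with the upper bound already obtained: once the solution lies in $[L, M_0]$, its next term is at least $\frac{r}{(q+1)M_0} > L$ when $r>0$; when $r$ could be small (the $L$-version), one argues instead that $f(x,y)\ge \frac{x+y}{qx+y}\cdot\min\{1, \cdot\} > L$ using $L < 1$ from its formula (\ref{eq: pqr}). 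In all cases one obtains an invariant interval $[m_0, M_0]$ with $L < m_0$ that the solution eventually enters. Then I would inflate/adjust $[m_0,M_0]$ to $[m^*,M^*]$ so that (i) and (ii) hold for \emph{all} solutions — using the explicit absorbing estimates above, which hold uniformly — and, crucially, so that the nodal lines of $f_x$ and $f_y$ do not cross $[m^*,M^*]^2$. This last point is where the assumption $f_x(\bar y,\bar y)\ne 0 \ne f_y(\bar y,\bar y)$ is used: since $\bar y$ is the unique equilibrium, and every solution is eventually near the interval generated by the absorbing estimates, one shows the absorbing interval can be taken small enough around $\bar y$ (or, if non-convergence forces terms to stay a fixed distance from $\bar y$, one shows the nodal lines still miss the square by a direct inequality comparing the nodal-line location to $\bar y$). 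Shrinking $[m^*,M^*]$ to a common absorbing interval that also avoids both nodal lines yields (iii).

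The main obstacle I anticipate is step three: simultaneously guaranteeing that $[m^*,M^*]$ is invariant, absorbing, and disjoint from both nodal lines. Invariance and absorption want the interval large (to catch all solutions and be mapped into itself), while avoiding the nodal lines wants it small (clustered near $\bar y$, away from the lines). Reconciling these requires the observation that if (A) fails then a non-convergent solution cannot have $\liminf = \limsup$, so its terms must oscillate, and Theorem \ref{th: Camouzis-Ladas} (or a direct argument) then pins the even and odd subsequences to limits $m^* < M^*$; one takes $[m^*,M^*]$ to be the interval spanned by all such subsequential limits over all solutions. One must then verify the nodal lines genuinely avoid this interval — the content being that a point of a nodal line inside $[m^*,M^*]^2$ would force, via the monotonicity structure on the complementary region, extra convergence that contradicts the oscillation. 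Carefully bookkeeping the four monotonicity types (corresponding to the four sign patterns of $(f_x,f_y)$, matching cases i.–iv. of Theorem \ref{th: mM first}) and checking each leads to either (A) or the desired picture is the technical heart of the argument.
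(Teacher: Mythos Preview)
Your proposal correctly identifies the basic structure: the partials of $f$ change sign only along the lines $y=K_1:=\frac{qr}{p-q}$ and $x=K_2:=\frac{-r}{p-q}$ (note a slip in your computation --- the sign of $f_x$ is that of $(p-q)y-qr$, not $(p-1)y-qr$), and the task is to produce an invariant, absorbing interval whose square avoids both lines. You also pinpoint the real tension: absorbing and invariant push the interval to be large, while avoiding the nodal lines pushes it to be small.

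Where your argument breaks down is in resolving that tension. Taking $[m^*,M^*]$ to be the span of subsequential limits of all solutions does not by itself exclude $K_1$ or $K_2$; your suggested mechanism (``a nodal point inside would force extra convergence contradicting oscillation'') is not made precise, and there is no clear way to make it work uniformly over the four sign patterns --- in particular, Theorem~\ref{th: Camouzis-Ladas} applies only in the (decreasing,\,increasing) case, so you cannot invoke it before monotonicity on the square has been established. The degenerate case you single out (equilibrium on a nodal line) is also not the right reduction: the obstruction is not whether $(\bar y,\bar y)$ sits on a nodal line, but whether the absorbing square straddles one.

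The paper resolves the tension by an \emph{iterated shrinking} construction. One first obtains a single explicit absorbing box $[\mathcal L,\mathcal U]$ (by pulling back uniform bounds from the original Eq.~(3-3), which sidesteps your worry about $r$ possibly being nonpositive). Then set $\phi(m,M)=\min_{[m,M]^2} f$, $\Phi(m,M)=\max_{[m,M]^2} f$, and iterate: $m_{\ell+1}=\phi(m_\ell,M_\ell)$, $M_{\ell+1}=\Phi(m_\ell,M_\ell)$. Each $[m_\ell,M_\ell]$ is automatically invariant and absorbing. The key lemma is that if an invariant interval $[m,M]$ still contains $K_1$ or $K_2$, then the iteration \emph{strictly} shrinks it (i.e.\ $m<\phi(m,M)$ or $\Phi(m,M)<M$) unless already $m=M=\bar y$; this is proved by locating the extrema of $f$ on $[m,M]^2$ at explicit corners, writing the equations $\phi=m$, $\Phi=M$ as a polynomial system, and checking that its only positive solution is $m=M=\bar y$. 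Consequently either the nested intervals collapse to $\{\bar y\}$ (giving (A)), or at some finite stage $[m_N,M_N]$ already misses both $K_1$ and $K_2$ (giving (B)). This ``strict shrinkage while a nodal point remains inside'' is the missing ingredient in your plan.
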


The next lemma states that the function $f(\cdot,\cdot)$ associated to Eq.(3-2-L)
is bounded.  
\begin{lemma}
\label{lemma: f is bounded}
There exist {\em positive} constants $\mathcal{L}$ and 
$\mathcal{U}$ such that $L < \mathcal{L}$ and 
\begin{equation}
\label{ineq: bounds for f}
\mathcal{L} \leq f(x,y) \leq \mathcal{U}, \quad x,\ y \in [L,\infty)
\end{equation}
In particular,  
\begin{equation}
\label{eq: [u,l] is invariant for f}
f\left([\mathcal{L},\mathcal{U}]\times[\mathcal{L},\mathcal{U}]\right) \subset [\mathcal{L},\mathcal{U}]\end{equation}
\end{lemma}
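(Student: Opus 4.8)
The plan is to produce explicit constants by reducing the two‑sided bound to a statement about affine functions. Since $q,L>0$, the denominator obeys $qx+y\ge(q+1)L>0$ on $[L,\infty)^2$, so $f$ is continuous there; and after clearing this positive denominator, for any real $c$ the inequality $f(x,y)\le c$ is equivalent to $\varphi_c(x,y)\le 0$ and $f(x,y)\ge c$ to $\varphi_c(x,y)\ge 0$, where $\varphi_c(x,y):=r+(p-cq)\,x+(1-c)\,y$. Thus I only need to choose $c$ so that the affine function $\varphi_c$ has constant sign on $[L,\infty)^2$.

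The elementary point is that $\varphi_c$ is coordinate‑wise monotone on $[L,\infty)^2$ as soon as its coefficients $p-cq$ and $1-c$ share a sign, and then its extreme value on $[L,\infty)^2$ is attained at the corner $(L,L)$ and equals $\varphi_c(L,L)=r+(p+1)L-c(q+1)L=(q+1)L\,(f(L,L)-c)$, with $f(L,L)=\dfrac{r+(p+1)L}{(q+1)L}$. Accordingly I would set $\mathcal L:=\min(1,\ p/q,\ f(L,L))$ and $\mathcal U:=\max(1,\ p/q,\ f(L,L))$. For $c=\mathcal U$ both coefficients are $\le 0$, so $\varphi_{\mathcal U}$ is nonincreasing in each variable and $\max_{[L,\infty)^2}\varphi_{\mathcal U}=\varphi_{\mathcal U}(L,L)=(q+1)L\,(f(L,L)-\mathcal U)\le 0$; hence $f\le\mathcal U$ on $[L,\infty)^2$. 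Symmetrically, for $c=\mathcal L$ both coefficients are $\ge 0$, $\varphi_{\mathcal L}$ is nondecreasing, and $\min_{[L,\infty)^2}\varphi_{\mathcal L}=\varphi_{\mathcal L}(L,L)=(q+1)L\,(f(L,L)-\mathcal L)\ge 0$; hence $f\ge\mathcal L$ on $[L,\infty)^2$.

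It then remains to verify the three inequalities $1>L$, $p/q>L$ and $f(L,L)>L$, which give $L<\mathcal L\le\mathcal U$ and, since $\mathcal L>L>0$, positivity of both constants. The first is immediate from $L=AC/(AC+(B+C)\gamma)<1$. For the second, substituting (\ref{eq: pqr}) gives $p-qL=(B+C)\beta/(AC+(B+C)\gamma)>0$. For the third, $f(L,L)>L$ is equivalent to $r+(p+1)L-(q+1)L^2>0$, and a direct substitution of (\ref{eq: pqr}) yields $r+(p+1)L-(q+1)L^2=\alpha\,C\,(B+C)^2/(AC+(B+C)\gamma)^2>0$; in particular $r+(p+1)L>0$, so the numerator satisfies $r+px+y\ge r+(p+1)L>0$ on $[L,\infty)^2$, confirming that $f$ is well defined and positive. (Alternatively, $f(L,L)>L$ can be seen without computation: under (\ref{change of coordinates}) the point $(L,L)$ corresponds to $(x_n,x_{n-1})=(0,0)$ and $f(L,L)$ corresponds to $x_{n+1}=\alpha/A>0$, hence to a $y$‑value strictly larger than $L$.) Finally, because $[\mathcal L,\mathcal U]\subset[L,\infty)$, the bounds just established restrict to $f([\mathcal L,\mathcal U]^2)\subset[\mathcal L,\mathcal U]$, which is (\ref{eq: [u,l] is invariant for f}).

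The only step that is not completely routine is the single algebraic identity $r+(p+1)L-(q+1)L^2=\alpha C(B+C)^2/(AC+(B+C)\gamma)^2$ — equivalently, the fact that $f(L,L)>L$; I expect the cleanest writeup to invoke the change‑of‑variables remark above rather than expand the somewhat bulky formulas of (\ref{eq: pqr}). Everything else is the "monotone affine function attains its extremum at a corner" principle applied twice.
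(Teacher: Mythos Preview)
Your argument is correct. The affine reformulation $f(x,y)\lessgtr c \iff \varphi_c(x,y)\lessgtr 0$ together with the choice $\mathcal L=\min\{1,p/q,f(L,L)\}$, $\mathcal U=\max\{1,p/q,f(L,L)\}$ does exactly what is needed, and your verifications $L<1$, $L<p/q$, $L<f(L,L)$ (the last via the identity $r+(p+1)L-(q+1)L^2=\alpha C(B+C)^2/(AC+(B+C)\gamma)^2$, or equivalently via the change of variables) are all valid.

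The paper takes a different and somewhat shorter route: it works upstairs with the untransformed function $\tilde f(x,y)=(\alpha+\beta x+\gamma y)/(A+Bx+Cy)$, for which the two-sided bound $\min\{\alpha,\beta,\gamma\}/\max\{A,B,C\}\le\tilde f\le\max\{\alpha,\beta,\gamma\}/\min\{A,B,C\}$ is immediate, and then transports the resulting rectangle through the affine change of coordinates~(\ref{change of coordinates}) to obtain $[\mathcal L,\mathcal U]$ with $\mathcal L>L$ automatically (since $\tilde{\mathcal L}>0$ corresponds to a $y$-value $>L$). Your approach stays entirely in the $(p,q,r,L)$ coordinates and produces explicit, and in fact sharper, constants; the cost is the algebraic check of $f(L,L)>L$, which you rightly note is cleanest via the change-of-variables remark --- at which point you are essentially borrowing the paper's idea for that single step. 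Either argument is perfectly adequate for the lemma.
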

\begin{proof}
The function 
$$
\tilde{f}(x,y) = \frac{\alpha + \beta\, x + \gamma\, y}{A+ B\, x + C\, y},
\quad (x,y) \in (0,\infty)^2
$$
associated to Eq.(3-3) is bounded:
$$
\frac{\min\{\alpha,\beta,\gamma\}}{\max\{A,B,C\}}
\leq
\frac{\alpha + \beta\, x + \gamma\, y}{A+ B\, x + C\, y}
\leq
\frac{\max\{\alpha,\beta,\gamma\}}{\min\{A,B,C\}},
\quad (x,y) \in (0,\infty)^2
$$
Set $\tilde{\mathcal{L}} := \frac{\min\{\alpha,\beta,\gamma\}}{\max\{A,B,C\}}$ and 
$\tilde{\mathcal{U}} := \frac{\max\{\alpha,\beta,\gamma\}}{\min\{A,B,C\}}$.
The affine change of coordinates (\ref{change of coordinates}) maps the
 rectangular region $[\tilde{\mathcal{L}},\tilde{\mathcal{U}}]^2$ onto a rectangular region
 $[\mathcal{L},\mathcal{U}]^2$ which satisfies (\ref{ineq: bounds for f}) and (\ref{eq: [u,l] is invariant for f}).
\end{proof}
\begin{lemma}
\label{lemma: p=q}
If $p=q$, then every solution to  Eq.(3-2-L)  converges to the 
unique equilibrium.
\end{lemma}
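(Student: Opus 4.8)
The plan is to exploit the collapse of the right-hand side of Eq.(3-2-L) that occurs when $p=q$:
\[
f(x,y)=\frac{r+p\,x+y}{p\,x+y}=1+\frac{r}{p\,x+y},
\]
so that $f$ is coordinate-wise monotonic on all of $[L,\infty)^{2}$, with the direction of monotonicity governed solely by the sign of $r$: $f$ is constant when $r=0$, strictly decreasing in each variable when $r>0$, and strictly increasing in each variable when $r<0$. By Lemma \ref{lemma: f is bounded}, $f$ maps $[L,\infty)^{2}$ into the compact interval $[\mathcal{L},\mathcal{U}]$, which is thus invariant, and consequently every solution of Eq.(3-2-L) has all of its terms in $[\mathcal{L},\mathcal{U}]$ from the first iterate on. It therefore suffices to prove convergence to the equilibrium for solutions with initial data in $[\mathcal{L},\mathcal{U}]$, and for this I would invoke Theorem \ref{th: mM first} in the three sign cases for $r$.

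If $r=0$ then $f\equiv 1$, so $y_{n}=1$ for all $n\ge 1$ and $\overline{y}=1$; there is nothing to prove. If $r>0$, then $f$ is nonincreasing in both variables, and I would verify hypothesis (ii) of Theorem \ref{th: mM first} on $[\mathcal{L},\mathcal{U}]$: from $f(m,m)=M$ and $f(M,M)=m$ one gets $(p+1)\,m\,(M-1)=r=(p+1)\,M\,(m-1)$, hence $m(M-1)=M(m-1)$ and therefore $m=M$. Theorem \ref{th: mM first} then yields convergence of every such solution to the equilibrium.

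The case $r<0$ is the substantive one. Here $f$ is nondecreasing in both variables, so I would aim at hypothesis (i) of Theorem \ref{th: mM first}. The diagonal equation $f(m,m)=m$ is equivalent to the quadratic $g(m):=(p+1)m^{2}-(p+1)m-r=0$, whose roots $m_{-}\le m_{+}$ are real (the equilibrium of Eq.(3-2-L) exists), satisfy $m_{-}+m_{+}=1$ and $m_{-}m_{+}=-r/(p+1)>0$, and hence obey $0<m_{-}\le\tfrac12\le m_{+}=\overline{y}$. Thus hypothesis (i) can fail on $[\mathcal{L},\mathcal{U}]$ only through the spurious root $m_{-}$, and the crux of the case is to show $m_{-}<L<\mathcal{L}$, so that $m_{-}\notin[\mathcal{L},\mathcal{U}]$ while $m_{+}=\overline{y}=f(\overline{y},\overline{y})\in[\mathcal{L},\mathcal{U}]$. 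This is where $p=q$ enters essentially: it forces the relation $C\beta=B\gamma$ among the original parameters, equivalently $\beta+\gamma=\gamma(B+C)/C$; inserting this together with the formulas (\ref{eq: pqr}) for $p$, $r$ and $L$ into $g(L)$ and simplifying gives
\[
g(L)=-\,\frac{C\,(B+C)^{2}\,\alpha}{\big(AC+(B+C)\gamma\big)^{2}}<0 .
\]
Since $g$ opens upward and $L<m_{+}$, the inequality $g(L)<0$ places $L$ strictly between the roots, so $m_{-}<L$. Hence hypothesis (i) of Theorem \ref{th: mM first} holds on $[\mathcal{L},\mathcal{U}]$, and every solution converges to $\overline{y}$.

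I expect the only genuine computation to be the evaluation of $g(L)$ in terms of $\alpha,\beta,\gamma,A,B,C$; everything else is bookkeeping with Lemma \ref{lemma: f is bounded} and Theorem \ref{th: mM first}. One small point to be dispatched carefully along the way is the strict inequality $\overline{y}>L$, needed so that the equilibrium is the larger root $m_{+}$ and lies in the interior of the working interval; this follows either by applying the change of variables (\ref{change of coordinates}) to the positive equilibrium of Eq.(3-3), or directly from $g(L)<0$ together with $\overline{y}>0$.
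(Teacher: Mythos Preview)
Your approach is essentially the one the paper uses: reduce to the invariant box $[\mathcal{L},\mathcal{U}]$ via Lemma~\ref{lemma: f is bounded}, observe that $f(x,y)=1+r/(px+y)$ is coordinate-wise monotone with direction fixed by $\mathrm{sign}(r)$, and invoke parts (i) or (ii) of Theorem~\ref{th: mM first}. The paper compresses the verification of the $m{=}M$ condition into the single phrase ``a direct algebraic calculation,'' whereas you make explicit what that calculation must actually contain.

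In particular, your treatment of the case $r<0$ supplies the step the paper suppresses. The system $f(m,m)=m$, $f(M,M)=M$ reduces to $(m-M)(m+M-1)=0$, so algebra alone does \emph{not} force $m=M$; one must exclude the spurious root $m_{-}=1-\overline{y}$ from $[\mathcal{L},\mathcal{U}]$. Your computation $g(L)=-C(B+C)^{2}\alpha/\big(AC+(B+C)\gamma\big)^{2}<0$ (using $C\beta=B\gamma$, which is exactly the content of $p=q$) places $L$ strictly between the roots of $g$, hence $m_{-}<L<\mathcal{L}$, and closes the argument cleanly. This is the genuine content behind the paper's ``direct algebraic calculation,'' and your identification of it is correct.
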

\begin{proof}
If $p=q$ then $D_1 f(x,y) =  - \frac{p\,r}
     {{\left( p\,x + y \right)}^2}  $ and
  $D_2 f(x,y) = - \frac{r}
     {{\left( p\,x + y \right) }^2}  $.
     Thus, depending on the sign  of $r$, 
     the function $f(x,y)$ is either  nondecreasing in both coordinates, or 
      nonincreasing in both coordinates on  $[L,\infty)$.  By Lemma \ref{lemma: f is bounded}, all solutions 
      $\{y_n\}_{n=-1}^\infty$ satisfy $y_n \in [\mathcal{L},\mathcal{U}]$ for $n\geq 1$.
      A direct algebraic calculation may be used to show that
all solutions $(m,M)\in [\mathcal{L},\mathcal{U}]$ of either one of the systems
of equations
$$
\left\{
\begin{array}{rcl}
 M &=& f(M, M) \\ 
 m &=& f(m, m)
\end{array}
\right.
\quad \mbox{and} \quad 
\left\{
\begin{array}{rcl}
 M &=& f(m, m) \\
  m &= & f(M, M)
\end{array}
\right.
$$
necessarily satisfy  $m=M$.
     In either case, the hypotheses (i) or (ii) of Theorem \ref{th: mM first} are satisfied,
     and the conclusion of the lemma follows.
\end{proof}

We will need the following elementary result, which is given here  without proof.
\begin{lemma}
\label{lemma: signs of partials}
Suppose $q\neq p$.
The function $f(x,y)$ has continuous partial derivatives on $(L, \infty)^2$, and 
\begin{itemize}
\item[\rm i.]
$D_1 f(x,y) = 0$ if and only if   $y=\frac{q\,r}{q-p}$, and
$D_1 f(x,y) > 0$ if and only if   $(p-q)\,y > q\,r$.
\item[\rm ii.]
$D_2 f(x,y) = 0$ if and only if   $x=\frac{-r}{p-q}$, and
$D_2 f(x,y) > 0$ if and only if   $(q-p)\,x> r$.
\end{itemize}
\end{lemma}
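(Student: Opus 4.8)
The statement is purely computational, so the plan is simply to differentiate $f$ and inspect the results. First I would record that
\[
f(x,y) = \frac{r + p\,x + y}{q\,x + y}
\]
is a quotient of two affine functions whose denominator $q\,x+y$ is strictly positive on $(L,\infty)^2$: indeed $q = B/C > 0$, and since $L = AC/(AC+(B+C)\gamma) > 0$ every $(x,y)$ in the domain has $x,y > 0$, so $q\,x+y > 0$. A rational function with nonvanishing denominator is of class $C^\infty$ on its domain, which already gives the continuity of the partial derivatives asserted in the lemma and justifies the quotient rule used below.

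Next I would apply the quotient rule. Writing $N = r+p\,x+y$ and $D = q\,x+y$, a one-line computation gives
\[
D_1 f(x,y) = \frac{N_x D - N D_x}{D^2} = \frac{p\,(q\,x+y) - q\,(r+p\,x+y)}{(q\,x+y)^2} = \frac{(p-q)\,y - q\,r}{(q\,x+y)^2},
\]
and similarly
\[
D_2 f(x,y) = \frac{N_y D - N D_y}{D^2} = \frac{(q\,x+y) - (r+p\,x+y)}{(q\,x+y)^2} = \frac{(q-p)\,x - r}{(q\,x+y)^2}.
\]
The crucial observation is that the common denominator $(q\,x+y)^2$ is a strictly positive square on the domain, so the sign of each partial derivative coincides with the sign of its numerator, and each numerator is an affine function of a single variable.

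From here the conclusions follow by inspection. For (i), $D_1 f$ has the sign of the affine expression $(p-q)\,y - q\,r$; hence $D_1 f = 0$ precisely on the locus $(p-q)\,y = q\,r$ and $D_1 f > 0$ precisely when $(p-q)\,y > q\,r$. Since $p \neq q$, the equation $(p-q)\,y = q\,r$ has a unique solution in $y$, which is the critical value of $y$ at which $D_1 f$ changes sign. The argument for (ii) is identical using the numerator $(q-p)\,x - r$: its zero set is the line $(q-p)\,x = r$, equivalently $x = -r/(p-q)$, and $D_2 f > 0$ if and only if $(q-p)\,x > r$. In both cases the inequality conditions of the lemma are read off directly from the numerators, and the stated critical values are obtained by solving the corresponding linear equations.

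There is no genuine obstacle here beyond careful sign bookkeeping, since $p-q$ may be of either sign depending on the parameters; this is presumably why the paper records the result as elementary. The hypothesis $p \neq q$ is used only to guarantee that the vanishing loci of the numerators are nondegenerate lines and that solving for the critical value of $x$ or $y$ is legitimate. The one point I would state explicitly in the write-up is the strict positivity of $(q\,x+y)^2$ on $(L,\infty)^2$, as that is exactly what allows one to pass from the sign of each numerator to the sign of the corresponding partial derivative.
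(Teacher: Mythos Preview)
Your proof is correct and complete; the paper actually states this lemma without proof, explicitly labeling it an elementary result, so there is no argument to compare against. Your derivation of the partial derivatives and the sign analysis via positivity of $(q\,x+y)^2$ is exactly the routine computation the paper omits.
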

We will need to refer to the values $K_1$ and $K_2$ where
the partial derivatives of $f(x,y)$ change sign.
\begin{definition}
If $p \neq q$, set
$$
K_1:=\frac{q\,r}{p-q} \quad \mbox{and} \quad K_2:=\frac{-r}{p-q}
$$
\end{definition}
\begin{definition}
For $L \leq m \leq M$, let
$$
\phi(m,M) := \min \{ f(x,y) : (x,y) \in [m,M]^2\}
\quad \mbox{and} \quad 
\Phi(m,M) := \max \{ f(x,y) : (x,y) \in [m,M]^2\}
$$
\end{definition}
\begin{lemma} 
\label{lemma: m < phi}
Suppose $p \neq q$.
If $[m,M]\subset [\mathcal{L},\mathcal{U}]$ is an invariant interval for Eq.(3-2-L) 
with $m \leq K_1 \leq M$ or $m \leq K_2 \leq M$,  
then  $m < \phi(m,M)$ or $\Phi(m,M) < M$ or $m=M=\overline{y}$.
\end{lemma}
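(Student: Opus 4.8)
The plan is a proof by contradiction. Assume $[m,M]$ is invariant with $K_1\in[m,M]$ or $K_2\in[m,M]$ and that none of the three alternatives in the conclusion holds. Invariance gives $\phi(m,M)\ge m$ and $\Phi(m,M)\le M$, so failure of the first two alternatives forces $\phi(m,M)=m$ and $\Phi(m,M)=M$. If $m=M$, invariance on the one-point set $\{(m,m)\}$ gives $f(m,m)=m$, so $m$ is a positive root of $(q+1)t^2-(p+1)t-r=0$; when $r\ge 0$ that root is $\overline{y}$, and when $r<0$ the smaller of the two positive roots lies strictly below $L$ (established in the third step) while $m\ge\mathcal{L}>L$ by Lemma~\ref{lemma: f is bounded}, so in both cases $m=\overline{y}$, i.e.\ the excluded alternative $m=M=\overline{y}$. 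Hence I may assume $m<M$, and must derive a contradiction from $\phi(m,M)=m$, $\Phi(m,M)=M$.

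First I would locate $\phi$ and $\Phi$. By Lemma~\ref{lemma: signs of partials}, $D_1f(x,y)$ has the sign of $(p-q)(y-K_1)$ and $D_2f(x,y)$ has the sign of $(q-p)(x-K_2)$ on $(L,\infty)^2$; since $K_1=-qK_2$, the numbers $K_1,K_2$ have opposite signs when $r\ne0$ (and $r=0$ makes the hypothesis vacuous, as then $K_1=K_2=0<m$). Thus exactly one of $K_1,K_2$ lies in $[m,M]$, namely the positive one, and there are four cases according to the signs of $r$ and of $p-q$. In each case $f$ is strictly monotone in one coordinate on all of $[m,M]^2$ and changes monotonicity exactly once in the other, so $\phi$ and $\Phi$ are attained at vertices of the square; one finds $\{\phi=f(m,M),\ \Phi=f(m,m)\}$ when $r>0,\ p>q$; $\{\phi=f(M,m),\ \Phi=f(m,m)\}$ when $r>0,\ p<q$; $\{\phi=f(m,m),\ \Phi=f(m,M)\}$ when $r<0,\ p<q$; and $\{\phi=f(m,m),\ \Phi=f(M,m)\}$ when $r<0,\ p>q$.

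When $r>0$, the equation $f(t,t)=t$ has a single positive root $\overline{y}$, and I would substitute the two corner identities and eliminate $r$ using $f(m,m)=\dfrac{r+(p+1)m}{(q+1)m}$: the first case collapses to $(qm+1)(M-m)=0$ and the second to $(m+p)(M-m)=0$, each forcing $M=m$, contrary to $m<M$. When $r<0$, one corner identity is $m=f(m,m)$, so $m$ is a positive root of $(q+1)t^2-(p+1)t-r=0$; here I use that the smaller positive root $t_-$ satisfies $t_-<L$, which holds because $f(L,L)>L$ (equivalently $\widetilde f(0,0)=\alpha/A>0$ in the coordinates of Eq.~(3-3)), so that $L$ lies strictly between the two roots of $f(t,t)=t$. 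Since $m\ge\mathcal{L}>L$ this gives $m=\overline{y}$. The remaining corner identity is then $M=f(\overline{y},M)$ or $M=f(M,\overline{y})$, a quadratic in $M$ with $\overline{y}$ as one root, so $M$ is the conjugate root: $M=1-(q+1)\overline{y}$ when $K_1\in[m,M]$, and $M=(|r|-\overline{y})/(q\overline{y})$ when $K_2\in[m,M]$. Finally I combine $M>\overline{y}$ — which, after using $(q+1)\overline{y}^{\,2}-(p+1)\overline{y}-r=0$, becomes $\overline{y}<\tfrac1{q+2}$ (resp.\ $\overline{y}<\tfrac{p}{2q+1}$) — with the bound $\overline{y}\ge\tfrac{p+1}{2(q+1)}$ read off the equilibrium formula, and with $\overline{y}\le p/q$ (resp.\ $\overline{y}\le 1$), which holds because $f(x,K_1)\equiv p/q$ and $f(K_2,y)\equiv 1$ put these constants inside the invariant interval $[m,M]$; the three inequalities are mutually inconsistent, which is the contradiction.

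The crux will be the two cases with $r<0$. There the identity $f(m,m)=m$ no longer by itself identifies $m$ with the equilibrium, so one genuinely has to invoke the provenance of Eq.~(3-2-L) — the positivity of the original rational map, or equivalently the explicit value of $L$ — to discard the spurious smaller root, and one must additionally rule out a second invariant interval $[\overline{y},M]$ with $M>\overline{y}$. The elementary identities $f(x,K_1)\equiv p/q$ and $f(K_2,y)\equiv 1$ are what keep the last step manageable: they replace the unwieldy quadratic condition coming directly from $K_1\le M$ (resp.\ $K_2\le M$) by the single linear bound on $\overline{y}$ needed to close the argument.
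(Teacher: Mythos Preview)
Your proof is correct and follows the paper's overall strategy---assume $\phi(m,M)=m$ and $\Phi(m,M)=M$, locate the extrema of $f$ at corners of $[m,M]^2$ in each of the four sign cases, and solve the resulting two-equation system---but the execution differs in two respects.

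For the two cases with $r>0$ your elimination is cleaner than the paper's: after clearing denominators you eliminate $r$ via $f(m,m)=M$ and obtain the factor $(M-m)(qm+1)$, respectively $(M-m)(m+p)$, directly. The paper instead eliminates $M$, arrives at the cubic
\[
q(q+1)m^{3}+(1-pq)m^{2}+(-1-p-qr)m-r=0,
\]
and observes that it factors as $(qm+1)\bigl[(q+1)m^{2}-(p+1)m-r\bigr]$, whose only positive root (for $r\ge 0$) is $\overline{y}$. Both routes are equivalent; yours is shorter.

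The more substantive difference is in the two $r<0$ cases, which the paper only asserts are ``similar'' to case~(a). Your treatment supplies ingredients the paper does not mention: since $f(m,m)=m$ now has \emph{two} positive roots $t_{-}<\overline{y}$, you use $f(L,L)>L$ (equivalently $\tilde f(0,0)=\alpha/A>0$ in the original coordinates) to place $t_{-}$ strictly below $L<\mathcal{L}\le m$, forcing $m=\overline{y}$; then you exploit the constant-value identities $f(x,K_{1})\equiv p/q$ and $f(K_{2},y)\equiv 1$, which put $p/q$ (resp.\ $1$) inside $[m,M]$ by invariance, and combine the resulting bound $\overline{y}\le p/q$ (resp.\ $\overline{y}\le 1$) with $\overline{y}\ge(p+1)/(2(q+1))$ and the inequality coming from $M>\overline{y}$ to obtain the contradiction. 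This is a genuinely more complete argument than what the paper writes out.
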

\begin{proof}
By definition of $\phi$ and $\Phi$,  $m \leq \phi(m,M)$ and $\Phi(m,M) \leq M$.
Suppose 
\begin{equation}
\label{eq: m = phi}
m = \phi(m,M)\quad \mbox{and} \quad \Phi(m,M) = M
\end{equation} 
The proof will be complete when it is shown that $m=M$.
There are a total of four cases to consider:
(a) $r \geq 0$ and $p>q$, (b) $r<0$ and $p<q$,
(c) $r \geq 0$ and $p<q$, and (d) $r<0$ and $p>q$.
We present the proof of case (a) only, as the proof of the other cases
is similar.

If $r \geq 0$ and $p>q$, then $K_1 \in [m,M]$ and $K_2 \not \in [m,M]$.
Note that 
$$
[m,M]\times[m,M] = [m,M]\times [m,K_1]\ \bigcup \ [m,M]\times [K_1,M].
$$
By Lemma \ref{lemma: signs of partials}, the signs of the partial derivatives of $f(x,y)$ are constant on 
the interior of each of the sets 
$[m,M]\times [m,K_1] $ and $[m,M]\times [K_1,M]$, as shown in the diagram.
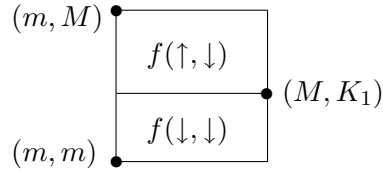
\begin{figure}[!h]
\begin{center}
\setlength{\unitlength}{1cm} 
 \begin{picture}(2.5,2.5)
 \put(0.5,2.5){\textbullet}
  \put(2.5,1.4){\textbullet}
   \put(0.5,0.5){\textbullet}
 \put(-0.8,.6){$(m,m)$}
 \put(2.8,1.4){$(M, K_{1})$}
 \put(-0.8,2.4){$(m,M)$}
 \put(0.6, 0.6){\framebox(2,2){}}
 \put(1,0.9){$f(\downarrow,\downarrow)$}
 \put(0.6,1.5){\line(1,0){2}}
 \put(1,1.9){$f(\uparrow,\downarrow)$}
 \end{picture}
\end{center} 
\label{fig: diagram 1}
\caption{The arrows indicate type of coordinate-wise monotonicity of $f(x,y)$ on each region.}
\end{figure}

 Since $f(x,y)$ is nonincreasing in both $x$ and $y$ on $[m, M]\times[m, K_{1}]$, 
\begin {equation}
\label{eq : ineq1-new1}
f(M, K_{1}) \,\leq\, f(x, y) \,\leq\, f(m, m)\ \mbox{ for }\ (x, y)\in [m, M]\times[m, K_{1}].
\end{equation}
Similarly,  $f(x,y)$ is nondecreasing in $x$ and nonincreasing in $y$ on $[m, M]\times[K_{1}, M]$, hence
\begin {equation}
\label{eq : ineq2-new1}
f(m, M) \,\leq\, f(x, y) \,\leq\, f(M, K_{1})\ \mbox{ for }\ (x, y)\in [m, M]\times[K_{1}, M].
\end{equation}
From (\ref{eq : ineq1-new1}) and (\ref{eq : ineq2-new1}) one has
\begin{equation}
\label{eq: phi = f(m,M)}
\phi(m,M) = f(m,M) \quad \mbox{and} \quad \Phi(m,M)=f(m,m)
\end{equation}
Combine (\ref{eq: phi = f(m,M)}) with relation (\ref{eq: m = phi}) to obtain the system of equations
\begin{equation}
\label{eq: sys phi Phi}
\left\{ \begin{array}{rcl}
f(m,M) & = & m \\ \\
f(m,m) & = & M
\end{array}
\right.
\end{equation} 
Eliminating $M$ from system (\ref{eq: sys phi Phi}) gives the cubic in $m$
\begin{equation}
\label{eq: cubic in m}
q\,(q+1)\,m^3 + (1-p\,q) \, m^2 + (-1-p-q\,r)\,m-r = 0
\end{equation}
which has the roots
\begin{equation}
\label{eq: the m roots}
- \frac{1}{q} , \quad 
\frac{1-p-\sqrt{(1+p)^2+4\,r\,(1+q)}}{2\,(1+q)}, \quad \mbox{and} \quad \
\frac{1-p+\sqrt{(1+p)^2+4\,r\,(1+q)}}{2\,(1+q)}
\end{equation}
Only one root in the list  (\ref{eq: the m roots}) is positive, namely
$$
m = \frac{1-p+\sqrt{(1+p)^2+4\,r\,(1+q)}}{2\,(1+q)} = \overline{y}
$$
Substituting into one of the equations of system (\ref{eq: sys phi Phi})
one also obtains $M=\overline{y}$, which gives the desired relation $m=M=\overline{y}$.
\end{proof}
\begin{definition}
Let $m_0:=\mathcal{L}$, $M_0:=\mathcal{U}$, and for $\ell=0,1,2,\ldots$ let
$m_\ell := \phi(m_\ell,M_\ell)$, $M_\ell :=\Phi(m_\ell,M_\ell)$.
\end{definition}
%
%
By the definitions of $m_\ell$, $M_\ell$, $\phi(\cdot,\cdot)$ and $\Phi(\cdot,\cdot)$,
we have that 
$[m_{\ell+1},M_{\ell+1}] \subset [m_\ell,M_\ell]$
for $\ell =0,1,2,\ldots$.
Thus the sequence $\{m_\ell\}$ is nondecreasing and $\{M_\ell\}$ is nonincreasing.
Let  $m^*:=\lim m_\ell$ and $M^*:=\lim M_\ell$.
\begin{lemma} Suppose $p \neq q$.
Either there exists $N \in \mathbb{N}$ such that 
$\{K_1,K_2\} \cap [m_N,M_N] = \emptyset$, or 
$m^* = M^* = \overline{y}$.
\end{lemma}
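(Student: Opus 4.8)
I would argue the contrapositive: assuming $\{K_1,K_2\}\cap[m_N,M_N]\neq\emptyset$ for every $N$, I will deduce $m^*=M^*=\overline{y}$. First I would record the structure of the nested intervals. Starting from $[m_0,M_0]=[\mathcal{L},\mathcal{U}]$, which is invariant by Lemma~\ref{lemma: f is bounded}, an easy induction shows that each $[m_\ell,M_\ell]$ is an invariant interval inside $[\mathcal{L},\mathcal{U}]$: since $m_{\ell+1}$ and $M_{\ell+1}$ are the min and max of $f$ over $[m_\ell,M_\ell]^2$, invariance of $[m_\ell,M_\ell]$ forces $f([m_\ell,M_\ell]^2)\subset[m_{\ell+1},M_{\ell+1}]\subset[m_\ell,M_\ell]$, and then $f([m_{\ell+1},M_{\ell+1}]^2)\subset f([m_\ell,M_\ell]^2)\subset[m_{\ell+1},M_{\ell+1}]$. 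Passing to the limit, the interval $[m^*,M^*]$ is also invariant: any $(x,y)\in[m^*,M^*]^2$ lies in $[m_\ell,M_\ell]^2$ for all $\ell$, so $m_{\ell+1}\le f(x,y)\le M_{\ell+1}$ for all $\ell$, and letting $\ell\to\infty$ gives $f(x,y)\in[m^*,M^*]$; also $[m^*,M^*]\subset[\mathcal{L},\mathcal{U}]$.

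Next I would place one of $K_1,K_2$ in the limit interval. By the hypothesis and the pigeonhole principle, one of them --- say $K_1$ --- belongs to $[m_N,M_N]$ for infinitely many $N$; since the intervals are nested, letting $N\to\infty$ along such a subsequence yields $m^*\le K_1\le M^*$. Thus $[m^*,M^*]$ is an invariant subinterval of $[\mathcal{L},\mathcal{U}]$ with $m^*\le K_1\le M^*$ (or $m^*\le K_2\le M^*$), so Lemma~\ref{lemma: m < phi} applies and gives one of: $m^*<\phi(m^*,M^*)$, or $\Phi(m^*,M^*)<M^*$, or $m^*=M^*=\overline{y}$.

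Finally I would eliminate the first two alternatives by showing $\phi(m^*,M^*)=m^*$ and $\Phi(m^*,M^*)=M^*$. Monotonicity of $\phi$ and $\Phi$ under shrinking of the interval gives $\phi(m^*,M^*)\ge\phi(m_\ell,M_\ell)=m_{\ell+1}\to m^*$ and $\Phi(m^*,M^*)\le\Phi(m_\ell,M_\ell)=M_{\ell+1}\to M^*$. For the reverse inequalities, choose $(x_\ell,y_\ell)\in[m_\ell,M_\ell]^2$ with $f(x_\ell,y_\ell)=m_{\ell+1}$; by compactness a subsequence converges to some $(x^*,y^*)\in[m^*,M^*]^2$, and continuity of $f$ gives $\phi(m^*,M^*)\le f(x^*,y^*)=\lim m_{\ell+1}=m^*$, so $\phi(m^*,M^*)=m^*$; the symmetric argument with maximizers gives $\Phi(m^*,M^*)=M^*$. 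Hence only the alternative $m^*=M^*=\overline{y}$ survives, which proves the lemma.

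The only point that needs any care is this last limiting step (equivalently, the continuity of the functions $\phi$ and $\Phi$ as functions of the interval endpoints), and it is routine because $f$ is continuous on the compact square $[\mathcal{L},\mathcal{U}]^2$; everything else is bookkeeping with nested invariant intervals together with a direct appeal to Lemma~\ref{lemma: m < phi}.
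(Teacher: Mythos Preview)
Your argument is correct and follows essentially the same route as the paper: place one of $K_1,K_2$ in the limiting interval $[m^*,M^*]$, invoke Lemma~\ref{lemma: m < phi}, and use continuity of $\phi,\Phi$ to force $\phi(m^*,M^*)=m^*$ and $\Phi(m^*,M^*)=M^*$, leaving only the alternative $m^*=M^*=\overline y$. The paper phrases this as a contradiction argument and simply cites continuity of $\phi$ and $\Phi$, whereas you spell out the compactness/minimizer argument and also explicitly verify invariance of $[m^*,M^*]$ (which is needed to apply Lemma~\ref{lemma: m < phi} but is not made explicit in the paper); these are refinements rather than genuine differences in approach.
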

\begin{proof}
Arguing by contradiction, suppose $m^* < M^*$ and  
for all $\ell\in \mathbb{N}$, $\{K_1,K_2\} \cap [m_\ell,M_\ell] \not = \emptyset$.
Since the intervals $[m_\ell,M_\ell]$ are nested and 
$\cap[m_\ell,M_\ell] = [m^*,M^*]$, it follows that 
$\{K_1,K_2\} \cap [m^*,M^*] \not = \emptyset$.
By Lemma \ref{lemma: m < phi}, we have 
\begin{equation}
\label{ineq: mstar1}
m^* < \phi(m^*,M^*) \quad \mbox{or} \quad \Phi(m^*,M^*)<M^*
\end{equation}
Continuity of the functions $\phi$ and $\Phi$
implies
\begin{equation}
\label{ineq: mstar2}
\phi(m^*,M^*) = \lim \phi(m_\ell,M_\ell) = \lim m_{\ell+1} = m^* 
\quad \mbox{or} \quad 
\Phi(m^*,M^*) = \lim \Phi(m_\ell,M_\ell) = \lim M_{\ell+1} = M^* 
\end{equation}
Statements (\ref{ineq: mstar1}) and (\ref{ineq: mstar2}) give a contradiction.
\end{proof}
\bigskip

\noindent {\bf Proof of Proposition \ref{prop: invariant and attracting}}. 
Suppose statement (A) is not true.
By Lemma \ref{lemma: p=q}, one must have $p\neq q$.
Note that if $\{y_\ell\}$ is a solution to Eq.(3-2-L), then
$y_{\ell+1} \in [m_\ell, M_\ell]$ for $\ell=0,1,2,\ldots$.
If $m^*=M^*$, since $m_\ell \rightarrow m^*$ and $M_\ell\rightarrow M^*$
we have $y_\ell \rightarrow \overline{y}$, but this is statement (A) which we 
are negating.  Thus 
 $m^* < M^*$, and by Lemma \ref{lemma: m < phi} there exists $N \in \mathbb{N}$ such that
$\{K_1,K_2\} \cap [m_N,M_N]=\emptyset$, so $f(x,y)$ is coordinate-wise monotonic
on $[m_N,M_N]$.
The set $[m_N,M_N]$ is invariant, and every solution enters $[m_N,M_N]$ starting at least 
with the term with subindex $N+1$.  
We have shown that if statement (A) is not true, then statement (B) is necessarily true.
This completes the proof of the proposition.
\hfill $\Box$
\bigskip


\section{The Equation (3-2) with $r\geq 0$, $p>q$ and $\frac{q\,r}{p-q} < \frac{p}{q}$}
\label{sec: The Equation (3-2)}
In this section we restrict our attention to the equation 
 \begin{equation*}
 \tag{3-2}
 y_{n+1} = f(x_n, x_{n-1})
 \quad n=0,1,\ldots,\quad  x_{-1},x_0 \in (0,\infty)
 \end{equation*}
 where
 \begin{equation*}
   f\,(\,x\,,\,y \,)\,:=\,\displaystyle \frac{r+p\, x+y }{q\, x+y},
\end{equation*}
For  $p>0$, $q>0$, and $r\geq 0$,
Eq.(3-2) has a unique {\it positive} equilibrium 
\begin{equation}
\label{eq: ybar formula}
\overline{y} = \frac{p+1+\sqrt{(p+1)^2+4 r\, (q+1)}}{2\,(q+1)}
\end{equation}
We note that if $I\subset (0,\infty)$ is an invariant compact interval, 
then necessarily $\overline{y} \in I$.

The goal in this section is to prove the following proposition,
which will provide an important part of the proofs of Theorems
\ref{th: 3-3} and \ref{th: 3-2}.
\begin{proposition}
\label{prop: f up down r >0 on I}
Let $p$, $q$ and $r$  be real numbers  such that 
\begin{equation}
\label{eq: hypotheses 3-2}
p>q>0, \quad r \geq 0,\quad \mbox{\rm and} \quad \frac{q\,r}{p-q} < \frac{p}{q}
\end{equation}
and let $[\tilde{m},\tilde{M}]\subset (\frac{q\,r}{p-q} , \frac{p}{q})$  
be a compact invariant interval for Eq.{\rm (3-2)}.   
Then every solution to Eq.{\rm (3-2)} with $x_{-1}, x_0 \in [\tilde{m},\tilde{M}]$ 
converges to the equilibrium.
\end{proposition}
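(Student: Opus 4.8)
The plan is to run, on the given invariant interval, a version of the $M$--$m$ method, but fortified by an exclusion of period-two solutions and a study of the subsequential limits, since the naive $M$--$m$ hypotheses may fail on a large invariant interval.

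\smallskip
\noindent\emph{Monotonicity.} First I would pin down the coordinate-wise behavior of $f$ on $[\tilde m,\tilde M]^2$. Because $[\tilde m,\tilde M]\subset(\tfrac{qr}{p-q},\tfrac pq)$ and $r\ge 0$, every $(x,y)\in[\tilde m,\tilde M]^2$ satisfies $y>\tfrac{qr}{p-q}=K_1$ and $x>\tfrac{qr}{p-q}\ge 0$, so Lemma~\ref{lemma: signs of partials} yields $D_1f(x,y)>0$ and $D_2f(x,y)<0$ throughout $[\tilde m,\tilde M]^2$: on this square $f$ is strictly increasing in its first variable and strictly decreasing in its second. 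Also, being a compact invariant interval, $[\tilde m,\tilde M]$ contains the equilibrium $\overline y$.

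\smallskip
\noindent\emph{No prime period-two solutions inside $[\tilde m,\tilde M]$.} Suppose $\{\dots,\phi,\psi,\phi,\psi,\dots\}$ were one with $\phi,\psi\in[\tilde m,\tilde M]$ and, say, $\phi<\psi$. Then $\phi=f(\psi,\phi)$ and $\psi=f(\phi,\psi)$; but by the monotonicity just established $f(\psi,\phi)>f(\phi,\psi)$ (larger first argument, smaller second argument), so $\phi>\psi$, a contradiction. Hence no such period-two solution exists.

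\smallskip
\noindent\emph{Convergence — the heart of the proof.} Let $\{y_n\}$ be a solution with $y_{-1},y_0\in[\tilde m,\tilde M]$; by invariance $y_n\in[\tilde m,\tilde M]$ for all $n$. Put $I=\liminf y_n$ and $S=\limsup y_n$, so $I,S\in[\tilde m,\tilde M]\subset(\tfrac{qr}{p-q},\tfrac pq)$; the goal is $I=S=\overline y$. Taking subsequences along which $y_n$ (resp.\ $y_n$) approaches $S$ (resp.\ $I$) and passing to the limit in $y_{n+1}=f(y_n,y_{n-1})$, the monotonicity of $f$ gives
\[
S\le f(S,I)\qquad\text{and}\qquad I\ge f(I,S).
\]
Subtracting the cleared-denominator forms of these two inequalities and factoring produces $(S-I)\bigl(q(S+I)-p+1\bigr)\le 0$, so either $S=I$ or $S+I\le\tfrac{p-1}{q}$. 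If $p\le 1$ the latter is impossible ($S,I>0$), so $S=I$ and then $S=f(S,S)$, forcing $S=\overline y$. If $p>1$ the two inequalities no longer close the argument (indeed the system $f(M,m)=M,\ f(m,M)=m$ can have non-diagonal solutions inside $(\tfrac{qr}{p-q},\tfrac pq)$, which is exactly why Theorem~\ref{th: mM first}(iv) cannot be invoked directly). Here I would refine the subsequential-limit analysis — tracking several consecutive terms near the extreme values so as to trap the whole $\omega$-limit set of the orbit, or equivalently studying the two-step map $f(f(\,\cdot\,,\,\cdot\,),\,\cdot\,)$ on $[\tilde m,\tilde M]^2$ — obtaining a polynomial system in $I$, $S$ and a few auxiliary limiting values. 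Eliminating those auxiliary variables and invoking the standing hypotheses $p>q>0$, $r\ge 0$, together with $\tfrac{qr}{p-q}<\tfrac pq$ (which is precisely $\overline y<\tfrac pq$) and the membership $I,S\in(\tfrac{qr}{p-q},\tfrac pq)$, one shows the only solution is $I=S=\overline y$. This elimination is the step that yields polynomials with very many terms, and is the one handled by the computer-algebra code of the appendix. Combining this with the period-two exclusion, every solution with initial data in $[\tilde m,\tilde M]$ converges to $\overline y$, which proves the proposition.

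\smallskip
\noindent The hard part will be that last step: the literature dichotomy available here (Theorem~\ref{th: Camouzis-Ladas} and its corollary) is stated for the opposite monotonicity — $F$ decreasing in the more recent variable — so for our $f$ (increasing in $y_n$, decreasing in $y_{n-1}$) the ``converges to equilibrium or to a prime period-two solution'' conclusion is not off the shelf and must be extracted by hand from the $\liminf/\limsup$ relations; this, and the attendant polynomial elimination where the hypothesis $\tfrac{qr}{p-q}<\tfrac pq$ is actually used, is where essentially all the work lies, whereas the monotonicity computation and the period-two exclusion are routine.
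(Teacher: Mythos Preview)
Your monotonicity computation, the period-two exclusion, and the $\liminf/\limsup$ argument giving $S+I\le (p-1)/q$ when $S\neq I$ are all correct, and they do settle the case $p\le 1$ essentially as in the paper's Lemma~\ref{lemma: f up down ybar p<1 q>1 r>0}. (That lemma also disposes of $q\ge 1$ by the same elimination---the quadratic (\ref{eq: Mm quadratic p>1}) has two positive roots only when $q<1$---a case your sketch does not single out but which falls to the same method.)

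The gap is in the ``refinement'' for $p>1$. Your plan to pass to the two-step map and eliminate is exactly the paper's Lemma~\ref{lemma: f up down ybar sufficient condition}, and there the algebra is small: one gets a quadratic $a_2 m^2+a_1 m+a_0=0$ with $a_0,a_2>0$ always, but $a_1\ge 0$ only when $r\le p^2 q-p$. Under that extra inequality the third-order $M$--$m$ argument closes. When $r>p^2 q-p$ (with $p>1$, $q<1$), however, the elimination you propose \emph{does not} force $I=S$: the auxiliary polynomial system genuinely has positive off-diagonal solutions, so no amount of refining the subsequential bookkeeping will finish the proof by that route. The hypothesis $\tfrac{qr}{p-q}<\tfrac pq$ does not rescue it.

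For that remaining region the paper abandons the $M$--$m$ framework entirely and uses a Lyapunov-type argument (Lemma~\ref{lemma: ybar is GA when suff cond 2}): after a rescaling that normalizes the equilibrium, the Lyness invariant $g(x,y)=\frac{(1+x)(1+y)(u^2-u+x+y)}{xy}$ is shown to strictly decrease along $T$, $T^2$, or $T^3$ depending on the quadrant relative to $(u,u)$, and this together with local asymptotic stability yields global attractivity. The computer-algebra code in the appendix verifies \emph{those} Lyapunov inequalities (Claims~\ref{claim: Q1Q3, g>b} and~\ref{claim: Q1Q3, g<b}), not an $M$--$m$ elimination; so your attribution of the appendix to the step you outline is mistaken, and the Lyness-invariant idea is the essential ingredient your proposal is missing.
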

Proposition \ref{prop: f up down r >0 on I} follows from 
Lemmas \ref{lemma: f up down ybar p<1 q>1 r>0}, 
\ref{lemma: f up down ybar sufficient condition} and 
\ref{lemma: ybar is GA when suff cond 2},
which are stated and proved next.
\bigskip

%
\begin{lemma}
\label{lemma: f up down ybar p<1 q>1 r>0}
Assume the hypotheses to Proposition \ref{prop: f up down r >0 on I}.
If either $q \geq 1$ or $p \leq 1$, then 
     every solution to Eq.{\rm (3-2)} with $x_{-1}, x_0 \in [\tilde{m},\tilde{M}]$ 
converges to the equilibrium.
\end{lemma}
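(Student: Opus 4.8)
The plan is to put Eq.(3-2) restricted to $[\tilde{m},\tilde{M}]$ into the framework of case (iv) of Theorem \ref{th: mM first}. First I would verify the required monotonicity of $f$ on the square $[\tilde{m},\tilde{M}]^{2}$. Since $[\tilde{m},\tilde{M}]\subset(\frac{q\,r}{p-q},\frac{p}{q})$ with $r\ge 0$ and $p>q>0$, every point $(x,y)$ of the square satisfies $y>\frac{q\,r}{p-q}=K_1$ and $x>0\ge\frac{-r}{p-q}=K_2$; by Lemma \ref{lemma: signs of partials} this gives $D_1 f(x,y)>0$ and $D_2 f(x,y)<0$ on the interior, so $f$ is continuous, nondecreasing in its first argument and nonincreasing in its second on $[\tilde{m},\tilde{M}]^{2}$. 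By hypothesis $f$ also maps this square into $[\tilde{m},\tilde{M}]$. Hence everything needed to invoke Theorem \ref{th: mM first}(iv) is in place except the algebraic implication: if $m,M\in[\tilde{m},\tilde{M}]$ satisfy $f(M,m)=M$ and $f(m,M)=m$, then $m=M$.

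To establish that implication I would clear denominators in $f(M,m)=M$ and $f(m,M)=m$, obtaining $r+p\,M+m=q\,M^{2}+m\,M$ and $r+p\,m+M=q\,m^{2}+m\,M$, and then subtract: the result factors as $(p-1)(M-m)=q\,(M+m)(M-m)$. Assume for contradiction that $m\neq M$; cancelling $M-m$ gives $p-1=q\,(M+m)$, and since $m,M>0$ and $q>0$ this forces $p>1$, which already disposes of the subcase $p\le 1$. In the subcase $q\ge 1$ I would then add the two cleared equations; writing $S=M+m=\frac{p-1}{q}$ and $P=M\,m>0$, the sum reduces (using $q\,S=p-1$, so that $q\,S^{2}-(p+1)S=-2S$) to $(q-1)P=-(S+r)$. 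Now $S>0$ because $p>1$, and $r\ge 0$, so the right-hand side is strictly negative, whereas $(q-1)P\ge 0$ since $q\ge 1$ and $P>0$ --- a contradiction. Under the hypothesis ``$q\ge 1$ or $p\le 1$'' one of the two arguments applies, so $m=M$.

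With the box condition verified, Theorem \ref{th: mM first}(iv) yields that Eq.(3-2) has a unique equilibrium in $[\tilde{m},\tilde{M}]$ and that every solution with $x_{-1},x_{0}\in[\tilde{m},\tilde{M}]$ converges to it; since $\overline{y}$ is the unique positive equilibrium and must lie in the invariant compact interval $[\tilde{m},\tilde{M}]$, this is the asserted convergence to the equilibrium. I do not expect a genuine obstacle here: the only point that needs care is the bookkeeping in the addition/subtraction step --- in particular carrying out the simplification of $q\,S^{2}-(p+1)S$ correctly so that the final sign comparison is clean --- and making sure the sign facts $S>0$ (equivalently $p>1$) and $P>0$ are legitimately available at the moment they are used.
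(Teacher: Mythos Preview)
Your proposal is correct and follows essentially the same approach as the paper: both verify the monotonicity of $f$ on $[\tilde m,\tilde M]^2$ via Lemma \ref{lemma: signs of partials}, set up the system $f(M,m)=M$, $f(m,M)=m$, subtract to obtain $q(m+M)=p-1$ (dispatching $p\le 1$), and then argue algebraically that $q\ge 1$ is incompatible with $m\neq M$ before invoking Theorem \ref{th: mM first}(iv). The only minor variation is in how the $q\ge 1$ subcase is finished --- the paper substitutes $m=\frac{p-1-Mq}{q}$ back into one equation to obtain the quadratic $(1-q)x^2+\frac{(p-1)(q-1)}{q}x+\frac{p-1+qr}{q}=0$ satisfied by both $m$ and $M$, and reads off from its coefficients that two positive roots require $q<1$; you instead add the cleared equations and use the symmetric functions $S,P$ to reach the contradiction $(q-1)P=-(S+r)$ directly --- but this is a cosmetic difference in bookkeeping, not a different route.
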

\begin{proof}
We verify  that hypothesis (iv) of  Theorem \ref{th: mM first} is true.
Since $x > \frac{r\,q}{p-q}$ for $x \in [\tilde{m},\tilde{M}]$, 
the function $f(x,y)$ is increasing in $x$ and decreasing in $y$ for 
$(x,y) \in [\tilde{m},\tilde{M}]^2$ by Lemma \ref{lemma: signs of partials}.
Let $m, \, M \in [\tilde{m},\tilde{M}]$ be such that $m \neq M$ and 
\begin{equation}
\label{eq: Mm for p<1}
\left\{
\begin{array}{rcl}
f(M,m) - M & = & 0\\
f(m,M) - m & = & 0
\end{array}
\right.
\end{equation}
We show first that system (\ref{eq: Mm for p<1}) 
has no solutions if either $q \geq 1$ or $p \leq 1$.
By eliminating denominators in both equations in  (\ref{eq: Mm for p<1}),
\begin{equation}
\label{eq: Mm for p<1 b}
\left\{
\begin{array}{rcl}
m - m\,M + M\,p - M^2\,q + r & = & 0 \\
M - m\,M + m\,p - m^2\,q + r & = & 0
\end{array}
\right.
\end{equation}
and by subtracting terms in (\ref{eq: Mm for p<1 b})  one obtains 
\begin{equation}
\label{eq: Mm for p<1 2}
(M-m)(  1-p + q\,(m+M) ) = 0
\end{equation}
Since $m\neq M$, we have $q \,(m+M) = p -1$, which  implies 
that for $p\leq 1$ there are no solutions to  system (\ref{eq: Mm for p<1})
which have both coordinates positive.
Now assume $p >1$;  from (\ref{eq: Mm for p<1 2}), 
$m = \frac{p-1-M\,q}{q}$, and substitute the latter into 
(\ref{eq: Mm for p<1}) to see that $x=M$ is a solution to the quadratic equation
\begin{equation}
\label{eq: Mm quadratic p>1}
\left( 1 - q \right)\, x^2  + 
  \frac{\left( -1 + p\right) \,
     \left( -1 + q \right) }{q}\, x + \frac{-1 + p + q\,r}{q} = 0
\end{equation}
By a symmetry argument, one has that $x=m$ is also a solution to (\ref{eq: Mm quadratic p>1}).
By inspection of the coefficients of the polynomial in the left-hand-side of 
(\ref{eq: Mm quadratic p>1}) one sees that two positive solutions are possible only when $q<1$. 
To get the conclusion of the lemma, note that the fact that 
(\ref{eq: Mm for p<1}) has no solutions with $m\neq M$
is just hypothesis (iv) of  Theorem \ref{th: mM first}.
\end{proof}
\begin{lemma}
\label{lemma: f up down ybar sufficient condition}
Assume the hypotheses to Proposition \ref{prop: f up down r >0 on I}.
If 
\begin{equation}
\label{sufficient condition}
r\ \leq\ p^2\, q - p
\end{equation}
then 
     every solution to Eq.{\rm (3-2)} with $x_{-1}, x_0 \in [\tilde{m},\tilde{M}]$ 
converges to the equilibrium.
\end{lemma}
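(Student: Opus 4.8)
The plan is to rule out, by contradiction, any solution with $\liminf_n y_n < \limsup_n y_n$. First I would dispose of the trivial sub-cases and record the monotonicity. If $q \ge 1$ or $p \le 1$ the conclusion is already Lemma \ref{lemma: f up down ybar p<1 q>1 r>0}, so I may assume $p > 1$ and $0 < q < 1$; moreover, since $r \ge 0$, the hypothesis $r \le p^2 q - p$ forces $p^2 q - p \ge 0$, i.e.\ $pq \ge 1$. Because $[\tilde m, \tilde M] \subset (\frac{qr}{p-q}, \frac pq)$, every $(x,y) \in [\tilde m, \tilde M]^2$ has $y > \frac{qr}{p-q}$, so by Lemma \ref{lemma: signs of partials} the function $f$ is strictly increasing in $x$ and strictly decreasing in $y$ on $[\tilde m, \tilde M]^2$. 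I would also note that for $p > 1$ there is no prime period-two solution: the period-two equations $f(M,m) = m$, $f(m,M) = M$ subtract to $(M - m)(M + m + p - 1) = 0$, forcing $M + m = 1 - p < 0$, impossible for positive terms. Thus the goal is genuine convergence to $\overline{y}$, and the corollary of Theorem \ref{th: Camouzis-Ladas} would finish the proof at once were its monotonicity hypothesis met; but $f$ here is increasing in $x$ and decreasing in $y$, the opposite pattern, so that corollary does not apply.

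The natural substitute is hypothesis (iv) of Theorem \ref{th: mM first}, which requires that the only $(m,M) \in [\tilde m, \tilde M]^2$ with $f(M,m) = M$ and $f(m,M) = m$ be $m = M$. As computed in Lemma \ref{lemma: f up down ybar p<1 q>1 r>0}, the off-diagonal solutions of this system are the roots of the quadratic (\ref{eq: Mm quadratic p>1}); under the present hypotheses these roots can be real, distinct, positive, and can both lie in $(\frac{qr}{p-q}, \frac pq)$, so they may belong to $[\tilde m, \tilde M]$. Hence Theorem \ref{th: mM first}(iv) cannot be invoked directly, and a finer argument is unavoidable. My plan is therefore to suppose $\mu := \liminf y_n < \nu := \limsup y_n$ (both in $[\tilde m, \tilde M]$ since the interval is invariant) and to exploit full limiting sequences. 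The strict monotonicity gives the first-order relations $\nu \le f(\nu, \mu)$ and $\mu \ge f(\mu, \nu)$; following a bi-infinite limiting orbit one or two further steps past the indices where $\nu$ and $\mu$ are attained yields additional polynomial relations among $\mu$, $\nu$, $p$, $q$, $r$. Eliminating $\mu$ and $\nu$ from this system — a resultant computation producing the very large polynomials recorded in Section \ref{appendix: computer 1} — leaves a polynomial condition on $(p,q,r)$ that must hold whenever $\mu < \nu$ is possible; the proof closes by checking, by computer algebra, that this condition is incompatible with $r \le p^2 q - p$ (together with $p > 1 > q$ and $pq \ge 1$).

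The main obstacle is the second step of this program. The two first-order inequalities alone do \emph{not} force $\mu = \nu$: they are satisfied with equality by the genuine off-diagonal solution $(m_-,m_+)$ of the $(m,M)$-system identified above (there $\nu = f(\nu,\mu)$ and $\mu = f(\mu,\nu)$ hold exactly), so any correct argument must extract strictly more information from the dynamics than these inequalities provide. Making that extra structure explicit, and then carrying out an elimination that is only tractable with computer assistance, is the crux; the role of the sufficient condition $r \le p^2 q - p$ is precisely to make the eliminated polynomial condition fail, thereby forbidding $\mu < \nu$ and yielding convergence.
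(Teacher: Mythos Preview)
Your proposal is not a proof but a plan, and the plan both misreads the paper and leaves its own acknowledged obstacle unresolved. You correctly observe that Theorem~\ref{th: mM first}(iv) can fail here because the $(m,M)$-system may admit an off-diagonal solution in $[\tilde m,\tilde M]$, and you then propose a $\liminf/\limsup$ argument along full limiting orbits followed by an elimination you attribute to Section~\ref{appendix: computer 1}. But Section~\ref{appendix: computer 1} contains no such computation: the computer-algebra code there supports Claims~\ref{claim: Q1Q3, g>b} and~\ref{claim: Q1Q3, g<b} inside Lemma~\ref{lemma: ybar is GA when suff cond 2}, i.e.\ the complementary case $r>p^2q-p$. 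For the present lemma there is nothing in the appendix to invoke, so your ``resultant computation'' is a promissory note with no backing. You yourself flag that the two first-order inequalities $\nu\le f(\nu,\mu)$, $\mu\ge f(\mu,\nu)$ are satisfied with equality at the spurious pair $(m_-,m_+)$; you never say what the ``additional polynomial relations'' extracted from longer limiting segments actually are, nor why they close up to a contradiction under $r\le p^2q-p$.

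The paper's argument avoids all of this by a device you did not consider: iterate once and study the induced third-order recursion $x_{n+1}=\hat f(x_n,x_{n-1},x_{n-2})$ obtained by substituting $x_n=f(x_{n-1},x_{n-2})$ into $x_{n+1}=f(x_n,x_{n-1})$. On $[\tilde m,\tilde M]^3$ one checks directly that $\hat f$ is constant in its first argument and strictly decreasing in the other two, so Theorem~\ref{th: Mm 3-3} applies. The corresponding $(m,M)$-system $\hat f(m,m,m)=M$, $\hat f(M,M,M)=m$ reduces, after subtraction and back-substitution, to a quadratic $a_2 m^2+a_1 m+a_0=0$ whose coefficients are manifestly positive once one uses $r\le p^2q-p$ to get $a_1\ge 0$; hence there is no positive solution and $m=M$ is forced. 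The key idea you are missing is that passing to the second iterate converts the awkward $(\uparrow,\downarrow)$ monotonicity into the uniformly nonincreasing pattern required by Theorem~\ref{th: Mm 3-3}, and that the hypothesis $r\le p^2q-p$ enters only to fix the sign of a single coefficient, with no computer assistance needed.
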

\begin{proof}
By substituting $x_n = f(x_{n-1},x_{n-2})$ into $x_{n+1} = f(x_{n},x_{n-1})$ we obtain
\begin{equation}
\label{eq: embed}
x_{n+1} = \frac{r + p\, x_n + x_{n-1}}{q\, x_n + x_{n-1}}
= 
\frac{r + p\, \frac{r + p\, x_{n-1} + x_{n-2}}{q\, x_{n-1} + x_{n-2}} + x_{n-1}}
{q\, \frac{r + p\, x_{n-1} + x_{n-2}}{q\, x_{n-1} + x_{n-2}} + x_{n-1}}, \quad n=0,1,\ldots
\end{equation}
that is,
\begin{equation}
\label{eq: embed 2}
x_{n+1} = \hat{f}(x_n,x_{n-1},x_{n-2}), \quad \mbox{\rm where }
\hat{f}(x,y,z) = \frac{p\,r + p^2\,y + q\,r\,y + 
    q\,y^2 + p\,z + r\,z + y\,z}
    {q\,r + p\,q\,y + q\,x^2 + 
    q\,z + y\,z}
\end{equation}
where the $x$ has been kept in $\hat{f}(x,y,z)$ for bookkeeping purposes.
Thus $\hat{f}(x,y,z)$ is constant in $x$.  
We claim $\hat{f}(x,y,z)$ is decreasing in both
$y$ and $z$.  To see that the partial derivative 
\begin{equation}
\label{eq: embed partial 3}
D_3\hat{f}(x,y,z) = -  \frac{\left( r + (p-q)\,y \right) \,
      \left( - q\,r   + (p-q)\,y \right) }
      {{\left( q\,r + p\,q\,y + 
         q\,y^2 + q\,z + y\,z
         \right) }^2} 
\end{equation}
is negative just use $p>q$ and 
the inequality $(p-q)\, y -q\,r >0$, which is true by Lemma \ref{lemma: signs of partials}.
The remaining partial derivative is
\begin{equation}
\label{eq: embed partial 2}
D_2\hat{f}(x,y,z) = - \frac{L(y,z)}
    {{\left( q\,r + p\,q\,y + 
         q\,y^2 + q\,z + y\,z
         \right) }^2} 
\end{equation}
where
\begin{eqnarray*}
h(y,z) := &
- q^2\,r^2   + 
  2\,p\,q\,r\,y - 2\,q^2\,r\,y + 
  p^2\,q\,y^2 - p\,q^2\,y^2 + 
  q^2\,r\,y^2 + p\,r\,z - q\,r\,z + 
  \\ & \quad + \ 
  p\,q\,r\,z - q^2\,r\,z + 
  2\,p\,q\,y\,z - 2\,q^2\,y\,z + 
  2\,q\,r\,y\,z + p\,z^2 - q\,z^2 + 
  r\,z^2
\end{eqnarray*}
We have,
\begin{eqnarray}
D_1h(y,z) &=& 2\,\left( p - q \right) \,q\,r + 
  2\,q\,\left( p^2 - p\,q + 
     q\,r \right) \,y + 
  2\,q\,\left( p - q + r \right) \,z > 0,
  \\
  D_2h(y,z) &=& 
\left( p - q \right) \,
   \left( 1 + q \right) \,r + 
  2\,q\,\left( p - q + r \right) \,
   y + 2\,\left( p - q + r \right)
     \,z > 0
\end{eqnarray}
Since $\frac{q \,r}{p-q} \leq \tilde{m}$,    
$$
h(y,z) \geq 
h\left(\frac{q\,r}{p-q},\frac{q\,r}{p-q}\right) =
\frac{q\,{\left( 1 + q \right) }^2\,
    r^2\,\left( p^2 - p\,q + q\,r \right) }{{\left( p - q \right) }^2} > 0\, ,
    \quad y,\, z \, \in [\tilde{m},\tilde{M}]
$$
thus we conclude that $D_2\hat{f}(x,y,z) <0$ for $x,y,z \in [\tilde{m},\tilde{M}]$.

To complete the proof we verify the hypotheses of Theorem \ref{th: Mm 3-3}.
We claim that the system of equations
\begin{equation}
\label{eq: Mm for embed}
\left\{
\begin{array}{rcl}
\hat{f}(m,m,m) - M & = & 0 \\
\hat{f}(M,M,M) - m & = & 0
\end{array}
\right.
\end{equation}
has no solutions $(m,M)$ with $m\neq M$ whenever hypothesis (\ref{sufficient condition}) holds.

By eliminating denominators in both equations in  (\ref{eq: Mm for embed}) one obtains
{\small
\begin{equation}
\label{eq: Mm for p<1 b embed}
\begin{array}{rcl}
-m^2 + m^2\,M - m\,p - m\,p^2 - 
   m^2\,q + m\,M\,q + m^2\,M\,q + 
   m\,M\,p\,q - m\,r - p\,r - 
   m\,q\,r + M\,q\,r & = & 0 \\
  -M^2 + m\,M^2 - M\,p - M\,p^2 + 
   m\,M\,q - M^2\,q + m\,M^2\,q + 
   m\,M\,p\,q - M\,r - p\,r + 
   m\,q\,r - M\,q\,r & = & 0
\end{array}
\end{equation}
} 
and by subtracting terms in (\ref{eq: Mm for p<1 b embed})  one obtains 
\begin{equation}
\label{eq: Mm for p<1 2 embed}
\left( \left( m - M \right) \,
    \left( -m - M + m\,M - p - 
      p^2 - m\,q - M\,q + m\,M\,q - 
      r - 2\,q\,r \right)  \right) = 0
\end{equation}
Since $m\neq M$, we may use the second factor in the left-hand-side term of (\ref{eq: Mm for p<1 2 embed})
to solve for $M$ in terms of $m$, which upon substitution into $\hat{f}(m,m,m)=M$ 
and simplification yields the equation
\begin{equation}
\label{eq: no solutions}
\frac{a_2\, m^2 + a_1\, m + a_0}
  {\left( -1 + m \right) \,
    \left( 1 + q \right) \,
    \left( m^2 + m\,q + m^2\,q + 
      m\,p\,q + q\,r \right) } = 0
\end{equation}
where
\begin{eqnarray*}
a_0 & = & r\,\left( p + 2\,p\,q + p^2\,q + 
     q\,r + 2\,q^2\,r \right) \\
a_1 & = & p + p^2 + 2\,p\,q + 3\,p^2\,q + 
   p^3\,q + r - p\,r + 4\,q\,r + 
   4\,q^2\,r + 2\,p\,q^2\,r \\
a_2 & = & \left( 1 + q \right) \,
   \left( 1 + 2\,q + p\,q + 
     q\,r \right)
\end{eqnarray*}
By hypothesis (\ref{sufficient condition}) 
we have $r\, p \leq p^3\,q - p^2 < p^3 q$, hence $p^3\,q - r \, p >0$, which implies $a_1 \geq 0$.
By direct inspection one can see that $a_0>0$ and $a_2>0$.
 Thus (\ref{eq: no solutions}) has no positive solutions, and we conclude that 
(\ref{eq: Mm for embed}) has no solutions $(m,M) \in [\tilde{m},\tilde{M}]$ with $m\neq M$.
We have verified the hypotheses of Theorem \ref{th: Mm 3-3}, and the conclusion
of the lemma follows.
\end{proof}


 \begin{lemma}
 \label{lemma: LAS}
 Let $p>0$, $q>0$ and $r\geq 0$.
 If the positive equilibrium  $\overline{y}$  of Eq.(3-2) satisfies 
 $\overline{y}<\frac{p}{q}$, then $\overline{y}$ 
  is locally asymptotically stable {\rm (L.A.S.)}.
  \end{lemma}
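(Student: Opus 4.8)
The plan is to reduce the claim to the standard linearized-stability criterion for second-order difference equations $x_{n+1}=f(x_n,x_{n-1})$, namely that the equilibrium is L.A.S.\ when
$$
|D_1 f(\overline{y},\overline{y})| < 1 - D_2 f(\overline{y},\overline{y}) < 2
$$
(the Jury/Clark condition; see \cite{KuL01}). So first I would compute the partial derivatives of $f$ at the equilibrium. From $f(x,y)=\frac{r+px+y}{qx+y}$ one gets
$$
D_1 f(x,y) = \frac{(p-q)\,y - q\,r}{(q\,x+y)^2}, \qquad
D_2 f(x,y) = \frac{q\,x - (p-q)\,y \cdot 0 - r}{(q\,x+y)^2}
$$
— more precisely $D_2 f(x,y) = \frac{(q-p)\,x + \cdots}{(qx+y)^2}$; I would just carefully record $D_1 f(\overline{y},\overline{y}) = \frac{(p-q)\overline{y}-qr}{(q+1)^2\overline{y}^2}$ and $D_2 f(\overline{y},\overline{y}) = \frac{(q-p)\overline{y} - r}{(q+1)^2\overline{y}^2}$, using Lemma~\ref{lemma: signs of partials} for the numerators' form. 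Denote $s:=(q+1)^2\overline{y}^2>0$.

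Next I would use the equilibrium equation itself to simplify. Since $\overline{y}$ satisfies $(q+1)\overline{y}^2 = r + (p+1)\overline{y}$, i.e.\ $(q+1)\overline{y}^2 - (p+1)\overline{y} - r = 0$, I can substitute to rewrite the numerators of the partials purely in terms of $\overline{y}$, $p$, $q$, $r$ and reduce degrees. The key algebraic facts I expect to extract are: (1) $1 - D_2 f(\overline{y},\overline{y}) > 0$, which should follow because $D_2 f(\overline{y},\overline{y}) < 0$ automatically (its numerator $(q-p)\overline{y}-r$ is negative since $p>q$ would force it, but here we don't assume $p>q$; instead I'd note that if $p\le q$ then $D_2 f\le 0$ trivially, and if $p>q$ the hypothesis $\overline{y}<p/q$ gives... ) — more robustly, the second inequality $1 - D_2 f < 2$ is equivalent to $D_2 f > -1$, and the first inequality $|D_1 f| < 1 - D_2 f$ is the binding one. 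So the heart of the proof is showing
$$
(p-q)\overline{y} - qr \ \text{ and }\ -(q-p)\overline{y}+r \ \text{ satisfy } \ \bigl|(p-q)\overline{y}-qr\bigr| < s - \bigl((q-p)\overline{y}-r\bigr).
$$

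The main obstacle will be handling the absolute value, i.e.\ the sign of $D_1 f(\overline{y},\overline{y})$, which by Lemma~\ref{lemma: signs of partials} changes depending on whether $(p-q)\overline{y} \gtrless qr$; so I expect to split into the two cases $D_1 f \ge 0$ and $D_1 f < 0$ and verify the corresponding one-sided inequality in each, in both cases clearing denominators and using the quadratic relation for $\overline{y}$ together with the hypothesis $q\overline{y}<p$ to reduce to a manifestly-signed polynomial inequality in $p,q,r,\overline{y}$. I anticipate that in the case $D_1 f\ge 0$ the needed inequality $D_1 f - D_2 f < 1$, i.e.\ $(p-q)\overline{y}-qr - (q-p)\overline{y} + r < s$, simplifies (after substituting $s=(q+1)^2\overline{y}^2 = (q+1)(r+(p+1)\overline{y})$) to something like $2(p-q)\overline{y} - (q-1)r < (q+1)(r + (p+1)\overline{y})$, which should hold using $q\overline{y}<p$; and in the case $D_1 f<0$ the inequality $-D_1 f - D_2 f < 1$ becomes $-(p-q)\overline{y}+qr - (q-p)\overline{y}+r < s$, i.e.\ $(q+1)r < s$, which is immediate from the quadratic relation since $(q+1)r = s - (q+1)(p+1)\overline{y} < s$. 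Thus the genuinely delicate case is only $D_1 f\ge 0$, and that is where the hypothesis $\overline{y}<p/q$ will be used decisively; I would also double-check $D_2 f(\overline{y},\overline{y}) > -1$ separately, which again reduces via the quadratic relation to an inequality that holds for all $p,q>0$, $r\ge 0$.
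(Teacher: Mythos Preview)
Your approach is the same as the paper's: verify the Schur--Cohn/Jury condition $|t_1| < 1 - t_2 < 2$ for the linearization at $\overline{y}$. However, there are two issues worth flagging.

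First, there is a sign slip in your case analysis. From $|D_1 f| < 1 - D_2 f$, the case $D_1 f \ge 0$ gives $D_1 f + D_2 f < 1$, not $D_1 f - D_2 f < 1$; similarly the case $D_1 f < 0$ gives $D_2 f - D_1 f < 1$, not $-D_1 f - D_2 f < 1$. With the correct signs your two cases essentially swap: the case $D_1 f \ge 0$ becomes the trivial one (numerator inequality $-(q+1)r < (q+1)^2\overline{y}^2$), and the other case is the one that would need the hypothesis.

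Second, and more to the point, the paper avoids the case split altogether by carrying out the simplification you only allude to. Substituting $r = (q+1)\overline{y}^2 - (p+1)\overline{y}$ into the partials \emph{before} doing anything else yields the clean forms
\[
t_1 = D_1 f(\overline{y},\overline{y}) = \frac{p - q\,\overline{y}}{(q+1)\,\overline{y}},
\qquad
t_2 = D_2 f(\overline{y},\overline{y}) = -\,\frac{\overline{y}-1}{(q+1)\,\overline{y}}.
\]
Now the hypothesis $\overline{y} < p/q$ is \emph{exactly} the statement $t_1 > 0$, so there is nothing to split on: one needs only $t_1 < 1 - t_2$, which after one line of algebra is $\overline{y} > \tfrac{1}{2}\,\tfrac{p+1}{q+1}$, immediate from the explicit formula for $\overline{y}$. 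The inequality $1 - t_2 < 2$ reduces to $q\,\overline{y}+1>0$, trivially true. So your plan is sound in outline, but once you actually perform the substitution you advertise, the argument collapses to two lines and the sign-of-$D_1 f$ dichotomy evaporates.
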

 \begin{proof}
 Solving for $r$ in 
 $$
 \overline{y} = \frac{r+(p+1)\, \overline{y}}{(q+1)\,\overline{y}}
 $$
 gives
 \begin{equation}
 \label{eq: r in terms of ybar}
 r = (q+1) \,\overline{y}^2 - (p+1)\,\overline{y}
\end{equation}
Then a calculation shows 
$$
\begin{array}{rcl}
D_1 f(\overline{y},\overline{y}) &=& \frac{p-q\,\overline{y}}{\overline{y}\,(q+1)} \\ \\
D_2 f(\overline{y},\overline{y}) &=& - \frac{\overline{y}-1}{\overline{y}\,(q+1)} 
 \end{array}
$$
Set $t_1 := D_1 f(\overline{y},\overline{y})$ and $t_2 :=D_2 f(\overline{y},\overline{y})$.
The equilibrium $\overline{y}$ is locally asymptotically stable if the roots of the 
characteristic polynomial 
$$
\rho(x) = x^2 - t_1\, x - t_2
$$
have modulus less than one \cite{KuL01}. 
 By the Schur-Cohn Theorem, $\overline{y}$ is L.A.S.
 if and only if $|t_1| < 1 - t_2 < 2$.
 It can be easily verified that $1- t_2<2$ if and only if 
 $0 < q \, \overline{y}+1$ which is true regardless of the 
 allowable parameter values.
 Since $p-q\, \overline{y} > 0$ by the hypothesis, we have
 $|t_1| = |\frac{p-q\,\overline{y}}{\overline{y}(q+1)}| = \frac{p-q\,\overline{y}}{\overline{y}(q+1)}$, hence
 some algebra gives $|t_1| < 1-t_2$ if and only if
\begin{equation}
\label{eq: schur conn}
  \frac{1}{2} \frac{p+1}{q+1} < \overline{y}
\end{equation}
But (\ref{eq: schur conn}) is a true statement 
by  formula (\ref{eq: ybar formula}).
We conclude $\overline{y}$ is L.A.S.
 \end{proof}
\medskip

\begin{lemma}
\label{lemma: ybar is GA when suff cond 2}
Assume the hypotheses to Proposition \ref{prop: f up down r >0 on I}.
If 
\begin{equation}
\label{sufficient condition 2}
p>1,\quad q<1, \quad \mbox{\rm and} \quad r\ >\ p^2\, q - p
\end{equation}
then  every solution to Eq.{\rm (3-2)} with $x_{-1}, x_0 \in [\tilde{m},\tilde{M}]$ 
converges to the equilibrium.
\end{lemma}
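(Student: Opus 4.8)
The plan is to run the embedding argument of Lemma~\ref{lemma: f up down ybar sufficient condition} once more and then replace the polynomial estimate that was previously supplied by the hypothesis $r\le p^2q-p$ with one extracted from the three inequalities in~(\ref{sufficient condition 2}) together with the standing hypotheses $p>q>0$, $r\ge0$ and $\frac{qr}{p-q}<\frac{p}{q}$.

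First I would record the facts that hold throughout. Because $[\tilde m,\tilde M]\subset(\frac{qr}{p-q},\frac{p}{q})$, Lemma~\ref{lemma: signs of partials} gives that $f$ is increasing in $x$ and decreasing in $y$ on $[\tilde m,\tilde M]^2$; the compact invariant interval contains the equilibrium, so $\overline y<\frac{p}{q}$ and hence $\overline y$ is L.A.S.\ by Lemma~\ref{lemma: LAS}. Next I would reuse the computation from the proof of Lemma~\ref{lemma: f up down ybar sufficient condition} verbatim: the substitution $x_n=f(x_{n-1},x_{n-2})$ converts Eq.(3-2) into the third order equation $x_{n+1}=\hat f(x_n,x_{n-1},x_{n-2})$ with $\hat f$ as in~(\ref{eq: embed 2}), and the arguments that $D_2\hat f<0$ and $D_3\hat f<0$ on $[\tilde m,\tilde M]^3$ used only $p>q$, $r\ge0$ and $\frac{qr}{p-q}\le\tilde m$, all of which still hold here. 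Thus $\hat f$ is (non-strictly) decreasing in each of its three variables on $[\tilde m,\tilde M]^3$, and $[\tilde m,\tilde M]$ is invariant for the embedded equation.

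Now I would invoke Theorem~\ref{th: Mm 3-3}: it suffices to prove that the only solution $(m,M)\in[\tilde m,\tilde M]^2$ of $\hat f(m,m,m)=M$, $\hat f(M,M,M)=m$ has $m=M$. Proceeding exactly as in Lemma~\ref{lemma: f up down ybar sufficient condition} --- subtract the two cleared equations, cancel the factor $m-M$, solve for $M$ in terms of $m$, substitute into $\hat f(m,m,m)=M$ --- the question reduces to whether the quadratic $a_2m^2+a_1m+a_0$ of~(\ref{eq: no solutions}), with the same $a_0$, $a_1$, $a_2$, has a root in $[\tilde m,\tilde M]$ (whose associated $M$ also lies in $[\tilde m,\tilde M]$). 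Since $a_0>0$ and $a_2>0$, this polynomial is positive for all $m>0$ unless $a_1<0$, and under~(\ref{sufficient condition 2}) the sign of $a_1$ is no longer controlled, so the remaining task is the crux: to show that when~(\ref{sufficient condition 2}) and the standing hypotheses hold, $a_2m^2+a_1m+a_0>0$ for every $m$ in the open interval $(\frac{qr}{p-q},\frac{p}{q})$ that contains $[\tilde m,\tilde M]$. Since this quadratic is convex, it is enough to verify positivity at the two endpoints $m=\frac{qr}{p-q}$ and $m=\frac{p}{q}$, together with one further easily checked condition --- for instance that the vertex $-a_1/(2a_2)$ lies outside $(\frac{qr}{p-q},\frac{p}{q})$, or that the discriminant $a_1^2-4a_0a_2$ is nonpositive --- and each of these, after clearing denominators, is a polynomial inequality in $p$, $q$, $r$.

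The main obstacle is precisely this semialgebraic verification. The natural device is to absorb the strict inequalities of~(\ref{sufficient condition 2}) into new nonnegative parameters --- writing $p=1+u$, $q=1-t$, $r=p^2q-p+s$ with $u>0$, $0<t<1$, $s>0$ --- re-expanding the endpoint and vertex (or discriminant) expressions in $(u,t,s)$, and exhibiting each as a manifestly nonnegative quantity: a sum of monomials with nonnegative coefficients, or a positive combination of the remaining constraints $p>q$ and $\frac{qr}{p-q}<\frac{p}{q}$. It is in these re-parametrized, cleared forms that the polynomials become very large (on the order of hundreds of thousands of terms), so the verification is carried out with the computer algebra code of Section~\ref{appendix: computer 1}; the human content of the proof is the reduction above, while producing and checking the positivity certificate is done symbolically. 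If the single embedding should fail to suffice on part of the parameter range, the fallback is to embed once more and apply Theorem~\ref{th: Mm 3-3} (or Theorem~\ref{th: mM first}) to the resulting higher-order equation --- another natural source of the very large polynomial computations the paper alludes to.
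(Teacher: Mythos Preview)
Your approach has a genuine gap: the $mM$ method of Theorem~\ref{th: Mm 3-3} does \emph{not} apply in this parameter regime, because the system $\hat f(m,m,m)=M$, $\hat f(M,M,M)=m$ can have solutions with $m\neq M$ inside a compact invariant interval $[\tilde m,\tilde M]\subset(\frac{qr}{p-q},\frac{p}{q})$. Concretely, take $p=10$, $q=0.1$, $r=100$ (these satisfy all hypotheses of Proposition~\ref{prop: f up down r >0 on I} and~(\ref{sufficient condition 2})). Then $a_0=3400$, $a_1=-594$, $a_2=13.42$, and the quadratic $a_2m^2+a_1m+a_0$ has two positive roots $m_1\approx 6.75$, $m_2\approx 37.5$; one checks directly that $\hat f(m_1,m_1,m_1)=m_2$ and $\hat f(m_2,m_2,m_2)=m_1$. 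Moreover $[1.5,96]\subset(\frac{qr}{p-q},\frac{p}{q})\approx(1.01,100)$ is an invariant interval for $f$ (hence for $\hat f$) that contains both $m_1$ and $m_2$. So the positivity you hope to certify is simply false, and no reparametrization or CAS computation can produce it. The fallback of embedding once more is not known to help either; the $mM$ framework is only sufficient, and here it runs out.

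The paper takes an entirely different route: after the normalization $y_n=p\,z_n$ it borrows the invariant of Lyness' equation, $g(x,y)=\frac{(1+x)(1+y)(u^2-u+x+y)}{xy}$ (with $u$ the equilibrium of the normalized equation, and $u>1$ precisely when $r>p^2q-p$), and uses $g$ as a Lyapunov-like function for the map $T$ associated to~(3-2). One shows $g(T(x,y))<g(x,y)$ on the off-diagonal quadrants $Q_2(u,u)\cup Q_4(u,u)$ by hand, and $g(T^2(x,y))<g(x,y)$ or $g(T^3(x,y))<g(x,y)$ on $Q_1(u,u)\cup Q_3(u,u)$ via the large CAS computations of Section~\ref{appendix: computer 1}; those computations are for the Lyapunov differences $g-g\circ T^k$, not for any $mM$ quadratic. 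A $\liminf$ argument together with local asymptotic stability (Lemma~\ref{lemma: LAS}) then gives convergence to the equilibrium.
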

\begin{proof}
The proof begins with  a change of variable in Eq.(3-2)
to produce a transformed equation with normalized coefficients
analogous to those in the standard {\it normalized Lyness' Equation}
\cite{Ladas95}, \cite{Zeeman}, \cite{Kulenovic} 
\begin{equation}
\label{eq: Lyness}
z_{n+1} = \frac{\tilde{\alpha}+x_n}{x_{n-1}}
\end{equation}
We seek to use an argument of proof similar to the one used in \cite{Y2K},
in which one takes advantage of the existence of {\it invariant curves} of Lyness' Equation
to produce a Lyapunov-like function for Eq.(3-2).

Set $y_n = p z_n$ in Eq.(3-2) to obain the equation
\begin{equation}
\label{3-2-z}
z_{n+1} = \frac{a + z_n+g\, z_{n-1}}{b \, z_n+z_{n-1}},\quad n=0,1,\ldots
\quad z_{-1}, z_0 \in (0,\infty)
\end{equation}
where
\begin{equation}
\label{eq: new parameters}
a = \frac{r}{p^2}, \quad g = \frac{1}{p}, \quad b=q
\end{equation}
We shall denote with $\overline{z}$ the unique equilibrium of Eq.(\ref{3-2-z}).
Note that 
\begin{equation}
\label{eq: zbar = p ybar}
\overline{z} = p \, \overline{y}
\end{equation}
It is convenient to parametrize Eq.(\ref{3-2-z}) in terms of the equilibrium.
We will use the symbol $u$ to represent 
the equilibrium $\overline{z}$ of Eq.(\ref{3-2-z}).
By direct substitution of the equilibrium $u=\overline{z}$ into Eq.(\ref{3-2-z}) we obtain
\begin{equation}
\label{eq: alpha}
r = (b+1)\, u^2 - (g+1)\,u
\end{equation}
By (\ref{eq: alpha}), $r \geq 0$ iff   $u \geq \frac{g+1}{b+1}$.
Using (\ref{eq: alpha}) to eliminate $r$ from Eq.(\ref{3-2-z}) gives the following equation
for $b>0$, $g>0$ and $u\geq\frac{g+1}{b+1}$, equivalent to Eq.(\ref{3-2-z}):
\begin{equation}
\label{eq: y2k2u}
z_{n+1} = \frac{(b+1)\, u^2 - (g+1)\,u+ z_n + g\, z_{n-1}}{b\,z_n+z_{n-1}}, \quad n=0,1,2,\ldots,
\quad y_{-1},\, y_0 \in (0,\infty)
\end{equation}
Therefore it suffices to 
prove that all solutions of Eq.(\ref{eq: y2k2u})
converge to the equilibrium $u$.

The following statement is crucial for the proof of the proposition.
\begin{claim}
\label{claim: ybar > 1}
$u > 1$ if and only if $r > p^2\,q - p$.
\end{claim}
\begin{proof}
Since $\overline{y} = p\, \overline{z} = p\, u$, 
we have $u>1$ if and only if $\overline{y} > p$, which holds if and only if 
$$
\frac{p+1+\sqrt{(p+1)^2+4\,r\,(q+1)}}{2\,(q+1)} > p
$$
After an elementary simplification, the latter inequality 
can be rewritten as $r > p^2\,q - p$.
\end{proof}

By the hypotheses of the lemma, 
by Claim \ref{claim: ybar > 1}, and by 
(\ref{eq: new parameters}) and (\ref{eq: alpha}) we have
\begin{equation}
\label{eq: ineqs u,g,b}
b< 1, \quad g < 1, \quad 1 < u < \frac{1}{b}, 
\quad \mbox{\rm and}
\quad \frac{g+1}{b+1} \leq u
\end{equation}

We now introduce a function which is the invariant function for
(\ref{eq: Lyness}) with constant $\tilde{\alpha} = u^2-u$ 
(in this case the the equilibrium of (\ref{eq: Lyness}) is $u$):
\begin{equation}
\label{eq: invariantu}
g(x,y) = \frac{(1+x)(1+y)(u^2-u+x+y)}{x\, y}
\end{equation}
Note that $g(x,y) > 0$ for all $x,y\in(0,\infty)$ whenever $u>1$.
By using elementary calculus, one can show that 
the function $g(x,y)$  has a strict global minimum at $(u,u)$
\cite{Kulenovic}, \cite{Zeeman}, i.e.,
\begin{equation}
\label{eq: Lyness min}
g(u,u) < g(x,y), \quad (x,y) \in (0,\infty)^2
\end{equation}
We need some elementary properties of the sublevel sets
$$
S(c) := \{ (s,t)\in (0,\infty) : g(s,t) \leq c \}\, , \quad c>0
$$
We denote with $Q_\ell(u,u)$, $\ell=1,2,3,4$ the four regions
$$
\begin{array}{rcl}
Q_1(u,u) & := & \{ (x,y) \in (0,\infty)\times(0,\infty)\, : \ u \leq x,\ u\leq y \ \} \\  
Q_2(u,u) & := & \{ (x,y) \in (0,\infty)\times(0,\infty)\, : \ x \leq u,\ u\leq y \ \} \\ 
Q_3(u,u) & := & \{ (x,y) \in (0,\infty)\times(0,\infty)\, : \ x \leq u,\ y\leq u \ \} \\ 
Q_4(u,u) & := & \{ (x,y) \in (0,\infty)\times(0,\infty)\, : \ u \leq x,\ y\leq u \ \} 
\end{array}
$$
Let 
\begin{equation}
T(x,y) := \left(\, y\, ,\, \frac{(g+1)\,u^2-(b+1)\,u + y + g\,x}{b\,y+ x}\, \right), \quad (x,y) \in (0,\infty)\times(0,\infty)
\end{equation}
be the map associated to Eq. (\ref{eq: y2k2u})
(see \cite{KUM}).
\begin{claim}
\label{claim: Q2Q4}
If $(x,y) \in Q_2(u,u) \cup Q_4(u,u) \setminus \{ (u,u) \}$, 
then $g(T(x,y)) < g(x,y)$.
\end{claim}
\begin{proof}
Set
\begin{equation}
\Delta_1(x,y) := g(x,y)-g(T(x,y)) 
\end{equation}
A calculation yields
\begin{equation}
\label{eq: Delta1}
\Delta_1(x,y)  =
-  \frac{\left( 1 + x \right) \, F_1(x,y)\, F_2(x,y) }
      {x\,y\,\left( b\,x + y \right)\,F_3(x,y) }
\end{equation}
where
\begin{eqnarray*}  
\label{eq: F for Delta1}
F_1(x,y)  & := & b\,(x-u)\,(y-\mbox{$\frac{1}{b}$}) + (y-u)\,(b\,u+y+u-g)    \\
F_2(x,y)  & := &  b\,(x-u)^2+b\,(x-u)\,u+b\,(x-u)\,u^2+(u-y)(b\,u^2+y\,g)   \\
F_3(x,y)  & := &  (b+1)\,u^2-(1+g)\,u+x+g \, y  
\end{eqnarray*}
By (\ref{eq: ineqs u,g,b}), for $(x,y) \in Q_4(u,u)\setminus \{ (u,u)\}$ we have $u \leq x$ and 
$y \leq u < \frac{1}{b}$ with $(x,y) \neq (u,u)$, therefore $F_1(x,y) <0$, $F_2(x,y) >0$ and $F_3(x,y)>0$.
Consequently $\Delta_1(x,y)>0$ for $(x,y) \in Q_4(u,u)\setminus \{ (u,u)\}$.
To see that $\Delta_1(x,y)>0$ for $(x,y) \in Q_2(u,u)\setminus \{ (u,u)\}$ as well,
rewrite $F_1(x,y)$ and $F_2(x,y)$ as follows:
\begin{eqnarray*}  
\label{eq: F for Delta11}
F_1(x,y)  & =  &   b(u-x)(\mbox{$\frac{1}{b}$}-u)+(y-u)^2+(y-u)\,u+(y-u)\,(u-g)+b\,(y-u)\,x  \\
F_2(x,y)  & = &   b\,(x-u)\,(x+u^2)-b\,(y-u)\,u^2-(y-u)^2\,g-(y-u)\,u\,g \\
\end{eqnarray*}
For $(x,y) \in Q_2(u,u)\setminus \{ (u,u)\}$ we have $x \leq u \leq y$ and $(x,y) \neq (u,u)$.
Thus $F_1(x,y) > 0$, $F_2(x,y) < 0$, and $F_3(x,y) >0$,
which imply $\Delta_1(x,y) > 0$.
\end{proof}
%

%
\begin{claim}
\label{claim: Q1Q3, g>b}
Suppose $g >  b$. 
If $(x,y) \in Q_1(u,u) \cup Q_3(u,u) \setminus \{ (u,u) \}$, 
then $g(T^2(x,y)) < g(x,y)$.
\end{claim}
\begin{proof}
This proof requires extensive use of a computer algebra system
to verify certain inequalities involving rational expressions.
Here we give an outline of the steps, and refer the reader to 
Section \ref{appendix: computer 1} for the details.

Since $b < g < 1 < u < \frac{1}{b}$,
and $\frac{g+1}{b+1}<u$ we may write 
\begin{equation}
\label{eq: subs u,g}
\begin{array}{rcl}
u &=&   \frac{g+1}{b+1} + t \, , \quad t >0\\ \\
g &=& \frac{b+\frac{s}{b}}{1+s} \, , \quad s >0
\end{array}
\end{equation}
The expression $\Delta_2 :=g(x,y)- g(T^2(x,y))$
may be written as a single ratio of polynomials, 
$\Delta_2 = \frac{N}{D}$ with $D > 0$.
The next step is to show $N>0$ for $(x,y)\neq (u,u)$.

Points $(x,y)$ in $Q_1(u,u)$ may be written in the form $x=u+v$, $y=u+w$,
where $v,w\in [0,\infty)$.  
Substituting $x$, $y$, $u$ and $g$ in terms of $v$, $w$, $s$ and $t$
into the expression for $N$ one obtains a rational expression 
$\frac{\tilde{N}}{\tilde{D}}$ with positive denominator. 
The numerator  $\tilde{N}$ has some negative coefficients.
At this points two cases are considered, $w \geq v$,  and $w\leq v$.
These can be written as $w = v+k$ and $v=w+k$ for  nonnegative $k$.
Substitution of each one of the latter expressions in $\tilde{N}$ 
gives a polynomial with positive coefficients.  This proves 
$\Delta_2(x,y)>0$ for $(x,y) \in Q_2(u,u)$.

If now we assume $(x,y) \in Q_3(u,u)$ with $(x,y) \neq (u,u)$,
we may write 
\begin{equation}
\begin{array}{rcl}
x &=& \frac{u}{v+1} \quad v \in [0,\infty) \\ \\
y &=& \frac{u}{w+1} \quad w \in [0,\infty)
\end{array}
\end{equation}
The rest of the proof is as in the first case already discussed.
Details can be found in Section \ref{appendix: computer 1}.
\end{proof}

%
\begin{claim}
\label{claim: Q1Q3, g<b}
Suppose $g <  b$ and $u>1$. 
If $(x,y) \in Q_1(u,u) \cup Q_3(u,u) \setminus \{ (u,u) \}$, 
then $g(T^3(x,y)) < g(x,y)$.
\end{claim}
\begin{proof}
The proof is analogous to the proof of Claim \ref{claim: Q1Q3, g>b}. We provide an outline.
More details can be found in Section \ref{appendix: computer 1}.

Since $1<u$, we may write $u=1+t$ with $t >0$.
Also, $u < \frac{1}{b}$ implies $b < \frac{1}{u}$, and  
$b = \frac{1}{1+t+s}$ for $s >0$.  
Since $g<b$ we may write  
$g = \frac{1}{1+t+s+\ell}$ for $\ell>0$.

The expression $\Delta_3 := g(x,y)-g(T^3(x,y))$
may be written as a single ratio of polynomials, 
$\Delta_2 = \frac{N}{D}$ with $D > 0$.
The next step is to show $N>0$ for $(x,y)\neq (u,u)$.
This is done in a way similar to the procedure described in 
in Claim \ref{claim: Q1Q3, g>b}.
\end{proof}

To complete the proof of the lemma, let   
$(\phi,\psi ) \in (0,\infty)\times(0,\infty)$.
Let $\{y_n\}_{n\geq-1}$ be the solution to (\ref{eq: y2k2u})
with initial condition $(y_{-1},y_0)=(\phi,\psi)$,
and let $\{T^n(\phi,\psi)\}_{n\geq 0}$ be the corresponding orbit of $T$.
The following argument is essentially the same as the one found
in \cite{Y2K}; we provided here for convenience.
Define 
\begin{equation}
\label{eq: hatc def}
\hat{c} := \liminf_n g(T^n(\phi,\psi))  
\end{equation}
Note that $\hat{c} < \infty$, which  can be shown by 
applying Claims \ref{claim: Q2Q4}, \ref{claim: Q1Q3, g>b} and \ref{claim: Q1Q3, g<b} 
repeatedly as needed
to obtain a nonincreasing subsequence of 
$\{g(T^n(\phi,\psi))\}_{n\geq 0}$ that is bounded below by $g(u,u)$.
Let $\{g(T^{n_k}(\phi,\psi))\}_{k\geq 0}$ be a subsequence
convergent to $\hat{c}$.
Therefore there exists $c>0$ such that 
$$
g(T^{n_k}(\phi,\psi)) \leq c \quad \mbox{ for all $k\geq 0$,}
$$
that is, 
$$T^{n_k}(\phi,\psi)) \in S(c) := \{ (s,t) : g(s,t) \leq c \}\quad \mbox{for } k \geq 0
$$
The set $S(c)$ is closed by continuity of $g(x,y)$.
Boundedness of $S(c)$ follows from  
$$0< x,\, y <   
\frac{(1+x)(1+y)(u^2-u+x+y)}{x\, y} = g(x,y) = c ,
\quad \mbox{for} \quad (x,y) \in S(c)
$$
Thus $S(c)$ is compact, and 
there exists a convergent subsequence 
$\{ T^{n_{k_\ell}}(\phi,\psi))\}_\ell$ with limit $(\hat{x},\hat{y})$.
Note that
\begin{equation}
\label{eq: hat c = g(xhat,yhat)}
\hat{c} = \lim_{\ell \rightarrow \infty} g(T^{n_{k_\ell}}(\phi,\psi))
= g(\hat{x},\hat{y})
\end{equation}
We claim that $(\hat{x},\hat{y}) = (u,u)$.
If not, then by claims
\ref{claim: Q2Q4} \ref{claim: Q1Q3, g>b} and \ref{claim: Q1Q3, g<b},
\begin{equation}
\label{eq: cont ineq}
\min \{ g(T(\hat{x},\hat{y})),g(T^2(\hat{x},\hat{y})),g(T^3(\hat{x},\hat{y}))\} < \hat{c}
\end{equation}
Let $\| \cdot \|$ denotes the euclidean norm.
By (\ref{eq: cont ineq}) and continuity, there exists $\delta > 0$ such that 
\begin{equation}
\label{eq: by continuity}
\| (s,t) - (\hat{x},\hat{y}) \| < \delta \implies 
\min \{ g(T(s,t)),g(T^2(s,t)),g(T^3(s,t))\} < \hat{c}
\end{equation}
Choose $L\in \mathbb{N}$ large enough so that 
\begin{equation}
\label{eq: choose L}
\| T^{n_{k_L}}(\phi,\psi) - (\hat{x},\hat{y}) \| < \delta
\end{equation}
But then (\ref{eq: by continuity}) and (\ref{eq: choose L}) imply
\begin{equation}
\min \{ g(T^{n_{k_L}+1}(\phi,\psi)),g(T^{n_{k_L}+2}(s,t)),g(T^{n_{k_L}+3}(s,t))\} < \hat{c}
\end{equation}
which contradicts the definition (\ref{eq: hatc def}) of $\hat{c}$.
We conclude $(\hat{x},\hat{y}) = (u,u)$.
From this and the definition of convergence of sequences we have that for every $\epsilon > 0$ 
there exists $L \in \mathbb{N}$ such that
$\| T^{n_{k_L}}(\phi,\psi) - (u,u) \| < \epsilon$.
Finally, since 
$$
\max \left\{ |y_{n_{k_L}-1} - u | , |y_{n_{k_L}} - u | \right\}
 \leq \| (y_{n_{k_L}-1} - u , y_{n_{k_L}} - u ) \|
 = \| T^{n_{k_L}}(\phi,\psi) - (u,u) \| 
$$
we have that for every $\epsilon > 0$ there exists 
$L \in \mathbb{N}$ such that
$|y_{n_{k_L}} - u |<\epsilon$ and 
$|y_{n_{k_L}-1} - u |<\epsilon$.
Since  $u$ is a locally asymptotically stable 
equilibrium for Eq.(\ref{eq: y2k2u})
by Lemma \ref{lemma: LAS}, it follows that 
$y_n \rightarrow u$.
This completes the proof of the lemma.
\end{proof}
\bigskip


\section{Proof of Theorem \ref{th: 3-2-L}}
\label{sec: Proof 3-2-L}
To prove Theorem \ref{th: 3-2-L} it is enough to assume 
statement (B) of 
Proposition \ref{prop: invariant and attracting}.
Also by Lemma \ref{lemma: p=q} we may assume $p \neq q$
without loss of generality.
Thus we make the following standing assumption, valid 
throughout the rest of this section for Eq.(3-2-L).
\bigskip

\noindent {\bf Standing Assumption (SA)}
{\it Assume $p\neq q$ and that there exist $m^*$, $M^*$ with $\mathcal{L} \leq m^* < M^* \leq \mathcal{U}$ such that 
for Eq.(3-2-L) and its associated function $f(x,y)$,
\begin{itemize}
\item[\rm (i)]  $[m^{*},M^{*}]$ is an invariant interval.
\item[\rm (ii)] every solution  eventually enters $[m^{*},M^{*}]$.
\item[\rm (iii)] $f(x,y)$ is coordinate-wise strictly monotonic on $[m^{*},M^{*}]^{2}$.
\end{itemize}
} 
The function $f(x,y)$ is assumed to be coordinate-wise monotonic on $[m^*,M^*]$,
and there are four possible cases in which this can happen:
(a) $f(x,y)$ is increasing in both variables,  
(b) $f(x,y)$ is decreasing in both variables,
(c) $f(x,y)$ is decreasing in $x$ and increasing in $y$, and
(d) $f(x,y)$ is increasing in $x$ and decreasing in $y$.

We present several lemmas before completing the proof of Theorem \ref{th: 3-2-L}.

\noindent 
By considering the restriction of the map $T$ of Eq.(3-2-L) to $[m^*,M^*]^2$,
an application of the Schauder  Fixed Point Theorem \cite{RBH} gives
that $[m^*,M^*]^2$ contains the fixed point of $T$, namely $(\overline{y},\overline{y})$.
Thus we have the following result. 
\begin{lemma}
\label{lemma: ybar in invariant interval}
$\overline{y} \in [m^*,M^*]$.
\end{lemma}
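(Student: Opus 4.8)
The plan is to exhibit the equilibrium $\overline{y}$ of Eq.(3-2-L) as a fixed point of the planar map $T$ associated with the difference equation, restricted to the invariant square $[m^{*},M^{*}]^{2}$. Recall that for a second order equation $y_{n+1}=f(y_n,y_{n-1})$ the associated map is $T(u,v)=(v,f(v,u))$, and that $(u,v)$ is a fixed point of $T$ if and only if $u=v$ and $v=f(v,v)$, i.e., if and only if $u=v$ is an equilibrium of the difference equation.

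First I would verify the hypotheses of the Schauder Fixed Point Theorem \cite{RBH} on the set $[m^{*},M^{*}]^{2}$. This set is a nonempty, compact, convex subset of $\mathbb{R}^2$. The map $T$ is continuous there, because $f$ is continuous on $[L,\infty)^2 \supset [m^{*},M^{*}]^{2}$ (using $L<\mathcal{L}\le m^{*}$ from Lemma \ref{lemma: f is bounded}). Finally, $T$ maps $[m^{*},M^{*}]^{2}$ into itself: this is precisely hypothesis (i) of the Standing Assumption (SA), since for $(u,v)\in[m^{*},M^{*}]^{2}$ we have $v\in[m^{*},M^{*}]$ and $f(v,u)\in f([m^{*},M^{*}]^{2})\subset[m^{*},M^{*}]$, so that $T(u,v)\in[m^{*},M^{*}]^{2}$.

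By Schauder's theorem (Brouwer's theorem already suffices, the ambient space being finite dimensional), $T$ has a fixed point $(x_0,y_0)\in[m^{*},M^{*}]^{2}$. By the characterization of fixed points of $T$ noted above, $x_0=y_0$ and $y_0$ is an equilibrium of Eq.(3-2-L). Since Eq.(3-2-L) has a unique equilibrium, namely $\overline{y}$, we conclude $x_0=y_0=\overline{y}$, so that $(\overline{y},\overline{y})\in[m^{*},M^{*}]^{2}$ and hence $\overline{y}\in[m^{*},M^{*}]$, which is the assertion of the lemma.

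I do not expect any genuine obstacle in this argument. The only two steps that call for any care are the verification that $[m^{*},M^{*}]^{2}$ is invariant under $T$ (which reduces exactly to the invariance hypothesis (i) of (SA) for the interval $[m^{*},M^{*}]$), and the appeal to uniqueness of the equilibrium, which guarantees that the fixed point delivered by Schauder's theorem can only be $(\overline{y},\overline{y})$.
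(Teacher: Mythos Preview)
Your argument is correct and coincides with the paper's own proof: both restrict the map $T$ to the invariant square $[m^{*},M^{*}]^{2}$, invoke the Schauder (equivalently Brouwer) fixed point theorem, and then use uniqueness of the equilibrium to identify the fixed point as $(\overline{y},\overline{y})$. Your write-up merely spells out in more detail the invariance of the square under $T$ and the link between fixed points of $T$ and equilibria, which the paper leaves implicit.
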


\begin{lemma}
\label{lemma: f(up,up) and f(down,down)}
Neither one of the systems of equations
$$
\mbox{\rm (S$_1$)}\quad 
\left\{
\begin{array}{rcl}
 M &=& f(M, M) \\ 
 m &=& f(m, m)
\end{array}
\right.
\quad \mbox{and} \quad 
\mbox{\rm (S$_2$)} \quad 
\left\{
\begin{array}{rcl}
 M &=& f(m, m) \\
  m &= & f(M, M)
\end{array}
\right.
$$
have solutions $(m,M) \in [m^*,M^*]^2$ with $m<M$.  
\end{lemma}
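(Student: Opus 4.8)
### Proof proposal for Lemma~\ref{lemma: f(up,up) and f(down,down)}

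The plan is to show that for each of the two systems, eliminating $M$ produces a polynomial whose only positive root is $\overline{y}$, so that no solution with $m<M$ can exist. This is essentially the same computation already performed inside the proof of Lemma~\ref{lemma: m < phi}: there, the system
$$
\left\{\begin{array}{rcl} f(m,M)&=&m\\ f(m,m)&=&M\end{array}\right.
$$
was shown, after clearing denominators and eliminating $M$, to reduce to the cubic $q(q+1)m^3+(1-pq)m^2+(-1-p-qr)m-r=0$ whose unique positive root is $\overline{y}=\frac{1-p+\sqrt{(1+p)^2+4r(1+q)}}{2(1+q)}$.

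\textbf{System (S$_1$).} Here the calculation is shortest: subtracting the two equations $M=f(M,M)$ and $m=f(m,m)$ after clearing denominators, one gets a factor $(M-m)$ times a linear-in-$(m+M)$ expression; since we assume $m<M$ the other factor must vanish, which pins down $m+M$ in terms of $p,q$, and substituting back into either original equation yields a quadratic whose coefficients (using $r\ge 0$ when $r\ge0$, or the sign analysis from Lemma~\ref{lemma: signs of partials} otherwise) cannot admit two distinct positive roots. Alternatively, and more uniformly, one simply notes that a solution of (S$_1$) with $m=f(m,m)$, $M=f(M,M)$ forces both $m$ and $M$ to be equilibria of the scalar map $x\mapsto f(x,x)=\frac{r+(p+1)x}{(q+1)x}$; but $f(x,x)=x$ is the quadratic $(q+1)x^2-(p+1)x-r=0$, which has exactly one positive root, namely $\overline{y}$. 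Hence $m=M=\overline{y}$, contradicting $m<M$.

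\textbf{System (S$_2$).} This is the case requiring the genuine elimination. Clearing denominators in $M=f(m,m)$ and $m=f(M,M)$ gives
$$
\left\{\begin{array}{rcl} M(q+1)m &=& r+(p+1)m\\ m(q+1)M &=& r+(p+1)M\end{array}\right.
$$
Subtracting, $(q+1)mM\cdot 0$ drops out and one is left with $(p+1)(m-M)=0$ once the common factor is handled carefully; more precisely $(q+1)mM(m-M)$ does not appear, and the difference is $(p+1)(M-m)=0$ — wait, this needs the honest computation. Writing it out: the two equations give $(q+1)mM=r+(p+1)M$ and $(q+1)mM=r+(p+1)m$, so subtracting yields $(p+1)(M-m)=0$, forcing $M=m$ since $p+1>0$. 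So again $m=M$, contradicting $m<M$.

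\textbf{Conclusion and main obstacle.} In fact both systems collapse without heavy algebra: (S$_1$) reduces to the single equilibrium equation for $x\mapsto f(x,x)$, and (S$_2$) reduces, after subtracting the two cleared-denominator equations, to $(p+1)(M-m)=0$. The only point requiring care is the clearing of denominators: one must check $qx+y>0$ on $[m^*,M^*]^2$, which holds because $m^*\ge\mathcal{L}>L\ge 0$ and $q>0$ (or, if $q=0$, then $y\ge m^*>0$), so no spurious roots are introduced and no division by zero occurs. I expect this denominator-positivity bookkeeping, rather than any algebra, to be the only genuine step; everything else is immediate from the fact that the diagonal map $f(x,x)$ has the single positive fixed point $\overline{y}$ and that $p+1\neq 0$.
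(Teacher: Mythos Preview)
Your argument is essentially the same as the paper's: for (S$_1$) you reduce to the fact that $f(x,x)=x$ has a unique relevant root $\overline{y}$, and for (S$_2$) you subtract the two cleared-denominator equations to get $(p+1)(M-m)=0$. One small correction: your claim that the quadratic $(q+1)x^2-(p+1)x-r=0$ has exactly one \emph{positive} root fails when $r<0$ (which the (3-2-L) setting allows), since then both roots are positive; what is true, and sufficient, is that only $\overline{y}$ lies in $[L,\infty)\supset[m^*,M^*]$, as the smaller root corresponds under the affine change of variables to the negative equilibrium of Eq.~(3-3).
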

\begin{proof}
Since $x=\overline{y}$ is the only solution to $f(x,x)=x$,
it is clear that only $(\overline{y},\overline{y})$ satisfies (S$_1$).
Now let $(m,M)$ be a solution to (S$_2$).
From straightforward algebra applied to $M-m = f(m,m)-f(M,M)$ 
one arrives at $(p+1)(M-m)=0$, which implies $m=M$.
\end{proof}
\begin{lemma}
\label{lemma: 0<q<p}
Suppose $f(x,y)$ is increasing in $x$ and decreasing in $y$
for $(x,y) \in [m^*,M^*]$. Then $p-q>0$.
\end{lemma}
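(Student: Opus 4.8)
The plan is to argue by contradiction, exploiting the sign criteria for the partial derivatives of $f$ recorded in Lemma \ref{lemma: signs of partials}. Under the Standing Assumption we have $p \neq q$, so it is enough to rule out the case $p < q$. Assume then that $p - q < 0$ and derive a contradiction about the sign of $r$.

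First I would use the hypothesis that $f(x,y)$ is increasing in $x$ on $[m^{*},M^{*}]^{2}$: this gives $D_1 f(x,y) \geq 0$ throughout the (open) square, and by part i.\ of Lemma \ref{lemma: signs of partials} it forces $(p-q)\,y \geq q\,r$ for every $y \in (m^{*},M^{*})$. Since $m^{*} \geq \mathcal{L} > 0$ by Lemma \ref{lemma: f is bounded}, such a $y$ is strictly positive, and from $p-q<0$ we get $q\,r \leq (p-q)\,y < 0$; because $q = B/C > 0$ this yields $r < 0$. Next I would use that $f(x,y)$ is decreasing in $y$ on $[m^{*},M^{*}]^{2}$: this gives $D_2 f(x,y) \leq 0$ there, and by part ii.\ of Lemma \ref{lemma: signs of partials} it forces $(q-p)\,x \leq r$ for every $x \in (m^{*},M^{*})$. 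Choosing such an $x > 0$ and using $q - p > 0$ gives $r \geq (q-p)\,x > 0$, contradicting $r < 0$. Hence $p < q$ is impossible and $p - q > 0$.

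I do not expect any real obstacle here; the argument is essentially a two‑line sign chase. The only points to be careful about are purely technical: that strict coordinate‑wise monotonicity on the closed square $[m^{*},M^{*}]^{2}$ indeed delivers the weak derivative inequalities $D_1 f \geq 0$ and $D_2 f \leq 0$ on the open square (standard calculus), that $p \neq q$ is available so that Lemma \ref{lemma: signs of partials} applies, and that every point of $[m^{*},M^{*}]$ is strictly positive (from $\mathcal{L} > L > 0$) so that the products $(p-q)\,y$ and $(q-p)\,x$ carry the claimed signs.
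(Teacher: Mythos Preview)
Your proof is correct and follows essentially the same approach as the paper: both use Lemma~\ref{lemma: signs of partials} to translate the monotonicity hypotheses into the inequalities $(p-q)y \geq q\,r$ and $(q-p)x \leq r$ at a positive point of $[m^{*},M^{*}]$, and then observe these force contradictory signs on $r$ when $p<q$. The only cosmetic difference is that the paper evaluates specifically at the equilibrium $\overline{y}$ (citing Lemma~\ref{lemma: ybar in invariant interval} for $\overline{y}\in[m^{*},M^{*}]$) and works with the strict inequalities directly, whereas you pick an arbitrary interior point and pass through weak derivative inequalities first.
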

\begin{proof}
By the standing assumption (SA), $p \neq q$.
By Lemma \ref{lemma: signs of partials}, the coordinate-wise monotonicity hypothesis,
and the fact $\overline{y} \in [m^*,M^*]$ from Lemma \ref{lemma: ybar in invariant interval}, we have 
\begin{equation}
\label{eq: p-q sign}
(p-q)\,\overline{y} > q\,r \quad \mbox{\rm and} \quad (q-p)\,\overline{y} <  r
\end{equation}
The inequalities in (\ref{eq: p-q sign}) cannot hold simultaneously unless $p-q >0$.
\end{proof}
\begin{lemma}
\label{lemma: f up down lemma bounds}
If $f(x,y)$ is increasing in $x$ and decreasing in $y$
for $(x,y) \in [m^*,M^*]$, then 
$f([m^*,M^*]^2) \subset (1,\frac{p}{q})$.
\end{lemma}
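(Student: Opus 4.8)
The plan is to convert the coordinate-wise monotonicity hypothesis directly into the two one-sided bounds by an elementary manipulation of $f(x,y)=(r+px+y)/(qx+y)$, using Lemma~\ref{lemma: signs of partials}. First I would note that by Lemma~\ref{lemma: 0<q<p} we have $p-q>0$; since $q>0$ this gives $p/q>1$, so $(1,p/q)$ is a nonempty open interval and the claim makes sense.

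For the upper bound, observe that for any $(x,y)$ with $qx+y>0$ one has the chain of equivalences $f(x,y)<p/q \iff q(r+px+y)<p(qx+y) \iff qr+qy<py \iff qr<(p-q)y$. The hypothesis that $f$ is increasing in $x$ on $[m^*,M^*]^2$, together with Lemma~\ref{lemma: signs of partials}(i), says exactly that $(p-q)y\ge qr$ for every $y\in[m^*,M^*]$; and since the sign of $D_1 f(x,y)$ is the sign of $(p-q)y-qr$, hence independent of $x$, a vanishing of this expression at some $y$ would make $f(\cdot,y)$ constant, contradicting strict monotonicity (equivalently, $K_1\notin[m^*,M^*]$ by the construction behind Proposition~\ref{prop: invariant and attracting}). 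Thus $(p-q)y>qr$ throughout $[m^*,M^*]$, and the displayed equivalences give $f(x,y)<p/q$ on $[m^*,M^*]^2$.

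The lower bound is symmetric: $f(x,y)>1 \iff r+px+y>qx+y \iff r+(p-q)x>0 \iff (q-p)x<r$, and by Lemma~\ref{lemma: signs of partials}(ii) the hypothesis that $f$ is decreasing in $y$ on $[m^*,M^*]^2$ forces $(q-p)x<r$ for every $x\in[m^*,M^*]$ (again using that the sign of $D_2 f$ depends only on $x$, so strict monotonicity in $y$ excludes equality). Hence $f(x,y)>1$ on $[m^*,M^*]^2$, and combining the two bounds yields $f([m^*,M^*]^2)\subset(1,p/q)$. There is no serious obstacle in this argument; the only point demanding a little care is the strictness of the inequalities at the endpoints $m^*$ and $M^*$, which is exactly what the sign-constancy remark for $D_1 f$ and $D_2 f$ supplies.
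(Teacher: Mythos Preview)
Your argument is correct. The paper's own proof reaches the same bounds by a slightly different device: it observes that the sign conditions for $D_1 f$ and $D_2 f$ extend to all of $[x,\infty)\times[y,\infty)$ (since they depend on only one coordinate), and then bounds $f(x,y)$ strictly by the limits $\lim_{s\to\infty} f(s,y)=p/q$ and $\lim_{t\to\infty} f(x,t)=1$. Your version makes the same underlying observation explicit by noting that the inequality $f(x,y)<p/q$ is algebraically equivalent to $(p-q)y>qr$, which is precisely the strict-monotonicity condition from Lemma~\ref{lemma: signs of partials}; the two presentations are really the same idea viewed from opposite ends.
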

\begin{proof}
For $(x,y) \in [m^*,M^*]$, the function $f$ is well defined and is
componentwise strictly monotonic on the set $[x,\infty) \times [y,\infty)$.  Then,
\begin{equation}
\label{eq: f up down infinity}
\begin{array}{l}
\displaystyle f(x,y) < \lim_{s\rightarrow \infty} f(s,y) =\lim_{s\rightarrow \infty} \frac{r+p\,s + y}{q\,s + y} = \frac{p}{q} \\ \\
\displaystyle f(x,y) > \lim_{t\rightarrow \infty} f(x,t) =\lim_{t\rightarrow \infty} \frac{r+p\,x + t}{q\,x + t} = 1
\end{array}
\end{equation}
\end{proof}
\begin{lemma}
\label{lemma: f up down iff}
Let $p>0$, $q>0$ and $r\geq 0$.
If $f(x,y)$ is increasing in $x$ and decreasing in $y$ on $[m^*,M^*]$,  
then
\begin{equation}
\label{eq: ybar in interval}
\frac{q\,r}{p-q} <   \frac{p}{q}
\end{equation}
\end{lemma}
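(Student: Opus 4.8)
The plan is to establish the stronger chain $\frac{q\,r}{p-q}\le \overline{y}<\frac{p}{q}$, from which relation (\ref{eq: ybar in interval}) is immediate. Note first that the hypothesis of the lemma, together with Lemma \ref{lemma: 0<q<p}, already forces $p-q>0$, so throughout the argument dividing an inequality by $p-q$ preserves its direction.

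For the lower bound I would analyze the sign of $D_1 f$. The hypothesis says that for every fixed $y\in[m^*,M^*]$ the map $x\mapsto f(x,y)$ is strictly increasing on the nondegenerate interval $[m^*,M^*]$, hence $D_1 f(x,y)\ge 0$ there. By Lemma \ref{lemma: signs of partials} the sign of $D_1 f(x,y)$ is that of $(p-q)\,y-q\,r$, which does not depend on $x$; consequently $(p-q)\,y\ge q\,r$ for all $y\in[m^*,M^*]$. Since $\overline{y}\in[m^*,M^*]$ by Lemma \ref{lemma: ybar in invariant interval}, we obtain $(p-q)\,\overline{y}\ge q\,r$, that is $\overline{y}\ge \frac{q\,r}{p-q}$.

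For the upper bound I would invoke Lemma \ref{lemma: f up down lemma bounds}, which gives $f([m^*,M^*]^2)\subset(1,\frac{p}{q})$. Since $(\overline{y},\overline{y})\in[m^*,M^*]^2$ and $\overline{y}=f(\overline{y},\overline{y})$, this yields $\overline{y}<\frac{p}{q}$. Concatenating the two bounds produces $\frac{q\,r}{p-q}\le \overline{y}<\frac{p}{q}$, which is (\ref{eq: ybar in interval}).

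Since every step merely assembles lemmas already proved, there is no substantive obstacle; the only point deserving a word of care is the passage from strict monotonicity of $x\mapsto f(x,y)$ on $[m^*,M^*]$ to the sign condition $(p-q)\,y\ge q\,r$ at the endpoints, which follows by continuity of $D_1 f$. Alternatively, one may observe that $D_1 f(\cdot,y)$ is identically zero in $x$ exactly when $(p-q)\,y=q\,r$, so strict monotonicity actually forces the strict inequality $(p-q)\,y>q\,r$ for all $y\in[m^*,M^*]$, yielding the marginally sharper conclusion $\frac{q\,r}{p-q}<\overline{y}<\frac{p}{q}$.
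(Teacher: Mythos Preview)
Your proof is correct and follows essentially the same route as the paper: both establish the chain $\frac{q\,r}{p-q}<\overline{y}<\frac{p}{q}$ by combining Lemma~\ref{lemma: ybar in invariant interval}, Lemma~\ref{lemma: 0<q<p}, and Lemma~\ref{lemma: signs of partials} for the lower bound, and the limit $\lim_{s\to\infty}f(s,\overline{y})=\frac{p}{q}$ for the upper bound. The only cosmetic difference is that you cite Lemma~\ref{lemma: f up down lemma bounds} for the upper estimate while the paper reproduces that limit computation in place, and your treatment of strictness at the lower endpoint is slightly more explicit than the paper's.
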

\begin{proof}
Since $\overline{y} \in [m^*,M^*]$ by Lemma \ref{lemma: ybar in invariant interval}, we have
 $D_1(\overline{y},\overline{y}) > 0$ and $D_2(\overline{y},\overline{y})<0$. 
 By Lemma \ref{lemma: 0<q<p}, $p>q$, and 
by Lemma \ref{lemma: signs of partials},
\begin{equation}
\label{eq: signs 2}
(p-q)\,\overline{y} > q\,r
 \quad \mbox{\rm and} \quad 
 (q-p)\,\overline{y} <  r
\end{equation}
Then,
\begin{equation}
\label{eq: signs 3}
\overline{y} > \frac{q\,r}{p-q}
\end{equation}
In addition, by Lemma \ref{lemma: signs of partials}, 
\begin{equation}
\label{eq: f ybar up down infinity}
\displaystyle \overline{y} = 
f(\overline{y},\overline{y}) < \lim_{s\rightarrow \infty} f(s,\overline{y}) 
=\lim_{s\rightarrow \infty} \frac{r+p\,s + \overline{y}}{q\,s + \overline{y}} = \frac{p}{q} 
\end{equation}
\end{proof}
\begin{lemma}
\label{lemma: p-q+r>0}
Suppose $f(x,y)$ is increasing in $x$ and decreasing in $y$
for $(x,y) \in [m^*,M^*]$.
If $r<0$, then $p-q+r > 0$.
\end{lemma}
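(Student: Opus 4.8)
The plan is to argue by contradiction. Since $p>q$ by Lemma~\ref{lemma: 0<q<p} and $r<0$, the number $K_2:=\dfrac{-r}{p-q}$ is positive, and the assertion $p-q+r>0$ is equivalent to $K_2<1$; suppose instead $K_2\geq 1$. Because $f(x,y)$ is \emph{strictly} decreasing in $y$ on $[m^*,M^*]^2$, Lemma~\ref{lemma: signs of partials}(ii) forces $x>K_2$ for every $x\in[m^*,M^*]$, so $m^*>K_2\geq 1$. By Lemma~\ref{lemma: ybar in invariant interval} we have $m^*\leq\overline{y}\leq M^*$, and since $\overline{y}=f(\overline{y},\overline{y})$, Lemma~\ref{lemma: f up down lemma bounds} gives $1<\overline{y}<\frac{p}{q}$. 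Using the equilibrium identity $r=(q+1)\overline{y}^2-(p+1)\overline{y}$ one obtains the factorization $p-q+r=(q+1)(\overline{y}-1)(\overline{y}-\frac{p-q}{q+1})$, so the contradiction hypothesis $p-q+r\leq 0$ together with $\overline{y}>1$ forces $\overline{y}\leq\frac{p-q}{q+1}$, and in particular $p>2q+1$.

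Next I would exploit invariance. Since $f$ is increasing in $x$ and decreasing in $y$ on $[m^*,M^*]^2$, its minimum and maximum over the square are $f(m^*,M^*)$ and $f(M^*,m^*)$, so invariance yields $f(m^*,M^*)\geq m^*$ and $f(M^*,m^*)\leq M^*$. Clearing denominators in the first inequality (and using $m^*>1$) gives $M^*\leq U:=\dfrac{p\,m^*+r-q(m^*)^2}{m^*-1}$; combining this with $M^*\geq\overline{y}$ yields $q(m^*)^2+m^*(\overline{y}-p)-(\overline{y}+r)\leq 0$. A direct check with the equilibrium identity shows that the quadratic $t\mapsto qt^2+t(\overline{y}-p)-(\overline{y}+r)$ has $\overline{y}$ as one root and $\rho:=\dfrac{p-(q+1)\overline{y}}{q}$ as the other, with $\rho\geq 1$ because $\overline{y}\leq\frac{p-q}{q+1}$. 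Since $m^*\leq\overline{y}$, and $m^*=\overline{y}$ would force $M^*=\overline{y}$ by strict monotonicity of $f(\overline{y},\cdot)$ (contradicting $m^*<M^*$), we conclude $\rho\leq m^*<\overline{y}$.

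To finish, set $\eta(M):=qM^2+M(m^*-p)-(m^*+r)$, so that $f(M^*,m^*)\leq M^*$ is equivalent to $\eta(M^*)\geq 0$. The equilibrium identity gives $\eta(\overline{y})=(\overline{y}-1)(m^*-\overline{y})<0$, so $\overline{y}$ lies strictly between the two roots of the upward parabola $\eta$, and $\eta(M)<0$ for every $M\in[\overline{y},b_+)$, where $b_+$ is the larger root. Hence $\eta(M^*)\geq 0$ and $M^*\geq\overline{y}$ force $M^*\geq b_+$, and with $M^*\leq U$ this gives $b_+\leq U$, i.e.\ $\eta(U)\geq 0$. I expect the main obstacle to be showing this is impossible: substituting $U$ and eliminating $r$ via $r=(q+1)\overline{y}^2-(p+1)\overline{y}$ turns $\eta(U)$ into a ratio with positive denominator $(m^*-1)^2$ whose numerator is a polynomial in $m^*,\overline{y},p,q$, and one must verify that this numerator is negative throughout the admissible range $\rho\leq m^*<\overline{y}$, $\frac{p}{2q+1}\leq\overline{y}\leq\frac{p-q}{q+1}$, $p>2q+1$ (equivalently $\eta(U)<0$). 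This is a finite explicit computation — of the sort carried out with computer algebra elsewhere in the paper — possibly split into a couple of sub-cases; everything preceding it is routine bookkeeping via Lemma~\ref{lemma: signs of partials} and the equilibrium relation.
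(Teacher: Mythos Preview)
Your route is genuinely different from the paper's. The paper starts from the same observation $\overline{y}>K_2=\frac{-r}{p-q}$ (obtained from $D_2f(\overline{y},\overline{y})<0$) but then substitutes the closed formula for $\overline{y}$, and after a short case analysis is left with the sub-case $p>2q+1$, $q+r+1<0$. At that point the proof \emph{reverts to the original positive parameters} $\alpha,\beta,\gamma,A,B,C$ through relations~(\ref{eq: pqr}) and checks directly that $p-q+r>0$; this is the one place in Section~\ref{sec: Proof 3-2-L} where the fact that Eq.~(3-2-L) comes from Eq.~(3-3) is essential. Your plan, by contrast, never touches~(\ref{eq: pqr}): you attempt to squeeze a contradiction out of the invariance of $[m^*,M^*]$ alone, working only with $p,q,\overline{y},m^*$.

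There is a genuine gap, and it is exactly the step you flag. The inequality $\eta(U)<0$ is \emph{false} on the admissible range you describe. Take $q=2$, $p=10$, $\overline{y}=2.5$ (so $r=(q+1)\overline{y}^2-(p+1)\overline{y}=-8.75$, $p-q+r=-0.75\le0$, $\rho=\tfrac{p-(q+1)\overline{y}}{q}=1.25$, $\tfrac{p}{2q+1}=2\le\overline{y}\le\tfrac{p-q}{q+1}=\tfrac{8}{3}$, $p>2q+1$). With $m^*=2.4\in(\rho,\overline{y})$ one gets
\[
U=\frac{p\,m^*+r-q(m^*)^2}{m^*-1}=\frac{24-8.75-11.52}{1.4}\approx 2.664,
\qquad
\eta(U)=qU^2+U(m^*-p)-(m^*+r)\approx 0.297>0,
\]
and indeed $[2.4,2.6]$ is an honest invariant interval for $f$ with $f$ strictly increasing in $x$ and strictly decreasing in $y$ there (check: $f(2.4,2.6)\approx2.412$ and $f(2.6,2.4)\approx2.586$). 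So a purely $(p,q,r)$-level argument of the type you outline cannot close; some constraint tying $p,q,r$ back to positive $\alpha,\beta,\gamma,A,B,C$ is unavoidable. (The example above does \emph{not} arise from~(\ref{eq: pqr}) with positive parameters --- solving for $\alpha$ forces $\alpha<0$ --- which is precisely why the paper's lemma survives.) A minor point: you should also dispose explicitly of the branch $\overline{y}<\tfrac{p}{2q+1}$ (i.e.\ $\rho>\overline{y}$), where the quadratic bound combined with $m^*\le\overline{y}$ already forces $m^*=\overline{y}$ and yields the contradiction immediately.
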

\begin{proof}
Since $D_1 f(x,y) >0$ for $(x,y) \in [m^*,M^*]$,
and by  
Lemma \ref{lemma: signs of partials}, 
Lemma \ref{lemma: ybar in invariant interval} and 
 by Lemma \ref{lemma: 0<q<p}, we have 
$\overline{y} > \frac{-r}{p-q}$, that is,
\begin{equation}
\label{eq: ybar bigger than}
\sqrt{(p+1)^2+4\,r\,(q+1)} > \frac{-2\,r\,(q+1)}{p-q}-p -1
\end{equation}
If the right-hand-side of inequality (\ref{eq: ybar bigger than}) is nonnegative,
then, after squaring both sides of (\ref{eq: ybar bigger than}) we have
\begin{equation}
\label{eq: ybar bigger than 2}
(p+1)^2+4\,r\,(q+1)  >  \left(\frac{-2\,r\,(q+1)}{p-q}\right)^2 +\frac{4\,r\,(q+1)(p+1)}{p-q}  + (p+1)^2
\end{equation}
Further simplification of (\ref{eq: ybar bigger than 2}) and the hypothesis $r<0$ yield
\begin{equation}
\label{eq: ybar biggern than 3}
1 < \frac{r(q+1)}{(p-q)^2} + \frac{p+1}{p-q}
\end{equation}
which, after some elementary algebra, implies $p-q+r > 0$.  
Now assume the right-hand-side of inequality (\ref{eq: ybar bigger than}) is negative, 
relation that we may rewrite as
\begin{equation}
\label{eq: RHS<0}
\frac{-r}{p-q} < \frac{1}{2} \, \frac{p+1}{q+1}
\end{equation}
If $\frac{1}{2} \, \frac{p+1}{q+1}\leq 1$, then $\frac{-r}{p-q} < 1$,which gives the conclusion $p-q+r>0$.
If $\frac{1}{2} \, \frac{p+1}{q+1} > 1$, that is, $p>2\, q+1$, then
\begin{equation}
\label{eq: p-q+r>q+r+1}
p-q+r > q+r+1
\end{equation}
Therefore if $q+r+1\geq 0$ the conclusion of the lemma follows
from this and from (\ref{eq: p-q+r>q+r+1}).
Assume 
\begin{equation}
\label{eq: assumption q+r+1<0}
q+r+1<0
\end{equation}
From relations (\ref{eq: pqr}) we have 
\begin{equation}
\label{eq: q+r+1<0}
q+r+1 = 
\frac{\left( b + c \right) \,
    \left( a^2\,c^2 + 
      b\,c^2\,\alpha  + 
      c^3\,\alpha  - 
      a\,c^2\,\beta  + 
      2\,a\,b\,c\,\gamma  + 
      a\,c^2\,\gamma  + 
      b^2\,{\gamma }^2 + 
      2\,b\,c\,{\gamma }^2 + 
      c^2\,{\gamma }^2 \right)
      }{c\,{\left( a\,c + 
        b\,\gamma  + 
        c\,\gamma  \right) }^2}
\end{equation}
hence assumption (\ref{eq: assumption q+r+1<0}) and relation (\ref{eq: q+r+1<0}) imply
\begin{equation}
\label{eq: long term}
R:= a^2\,c^2 + 
      b\,c^2\,\alpha  + 
      c^3\,\alpha  - 
      a\,c^2\,\beta  + 
      2\,a\,b\,c\,\gamma  + 
      a\,c^2\,\gamma  + 
      b^2\,{\gamma }^2 + 
      2\,b\,c\,{\gamma }^2 + 
      c^2\,{\gamma }^2 < 0
\end{equation}
Further algebra gives
\begin{equation}
\label{eq: further algebra}
\frac{\gamma}{a\,c}\, R - \left( -  c^2\,\alpha  
      + a\,c\,\gamma  - 
  c\,\beta \,\gamma  + 
  b\,{\gamma }^2 \right)  = 
c^2\,\alpha  + 
  \frac{b\,c\,\alpha \,
     \gamma }{a} + 
  \frac{c^2\,\alpha \,\gamma }
   {a} + b\,{\gamma }^2 + 
  c\,{\gamma }^2 + 
  \frac{2\,b\,{\gamma }^3}
   {a} + \frac{b^2\,
     {\gamma }^3}{a\,c} + 
  \frac{c\,{\gamma }^3}{a} > 0
\end{equation}
Since $R<0$ by (\ref{eq: long term}), from inequality (\ref{eq: further algebra}) we have 
\begin{equation}
\label{eq: one more}
-  c^2\,\alpha  
      + a\,c\,\gamma  - 
  c\,\beta \,\gamma  + 
  b\,{\gamma }^2
  < 0
  \end{equation}
Finally, from   (\ref{eq: pqr}) we have 
\begin{equation}
\label{eq: yet one more}
p-q+r = 
-  \frac{{\left( b + 
         c \right) }^2\,
      \left( -  c^2\,
           \alpha    + 
        a\,c\,\gamma  - 
        c\,\beta \,\gamma  + 
        b\,{\gamma }^2 \right)
        }{c\,
      {\left( a\,c + 
          b\,\gamma  + 
          c\,\gamma  \right) }
        ^2} 
\end{equation}
Combining (\ref{eq: one more}) with (\ref{eq: yet one more}) we obtain $p-q+r >0$.
\end{proof}
%

\begin{lemma}
\label{lemma: f up down ybar is GA}
If $r<0$ and $f(x,y)$ is increasing in $x$ and decreasing in $y$
for $(x,y) \in [m^*,M^*]$, then 
every solution converges to the equilibrium. 
\end{lemma}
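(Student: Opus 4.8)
\
The plan is to reduce the claim to global attractivity of $\overline{y}$ on a compact invariant interval contained in $(1,\tfrac{p}{q})$, and then to argue there by imitating the treatment of the $r\ge 0$ case in Section~\ref{sec: The Equation (3-2)}. By Lemma~\ref{lemma: ybar in invariant interval}, $\overline{y}\in[m^*,M^*]$, and by Lemma~\ref{lemma: f up down lemma bounds}, $f([m^*,M^*]^2)\subset(1,\tfrac{p}{q})$. Since $[m^*,M^*]$ is invariant, $f$ is continuous and, on $[m^*,M^*]^2$, strictly increasing in $x$ and strictly decreasing in $y$ (by (SA)(iii) and Lemma~\ref{lemma: signs of partials}), the image $J:=f([m^*,M^*]^2)=[\,f(m^*,M^*),\,f(M^*,m^*)\,]$ is a compact subinterval of $(1,\tfrac{p}{q})\cap[m^*,M^*]$, is invariant, contains $\overline{y}$, and is eventually entered by every solution of Eq.(3-2-L). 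So it suffices to show every solution with initial data in $J$ converges to $\overline{y}$. I would also record at this point $p>q$ (Lemma~\ref{lemma: 0<q<p}), $p-q+r>0$ (Lemma~\ref{lemma: p-q+r>0}), $\overline{y}<\tfrac{p}{q}$, and that Eq.(3-2-L) has no prime period-two solution: if $f(M,m)=m$ and $f(m,M)=M$ with $m\neq M$, clearing denominators and subtracting gives $m+M=1-p<0$, which is impossible.

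For the case $q\ge 1$ or $p\le 1$ I would verify hypothesis (iv) of Theorem~\ref{th: mM first} on $[a,b]=J$. Suppose $m,M\in J$ satisfy $f(M,m)=M$ and $f(m,M)=m$; clearing denominators and subtracting gives $(M-m)\bigl(p-1-q(m+M)\bigr)=0$. If $p\le 1$ the second factor is negative, so $m=M$. If $q\ge 1$ and $m\ne M$ then $q(m+M)=p-1$ forces $p>1$, and eliminating $m$ exactly as in the proof of Lemma~\ref{lemma: f up down ybar p<1 q>1 r>0} shows that $m$ and $M$ are the two roots of a quadratic whose value at $x=1$ simplifies to $p-q+r>0$ and whose leading coefficient is $1-q\le 0$ (when $q=1$ the equation degenerates and forces $p-q+r=0$, contradicting Lemma~\ref{lemma: p-q+r>0}); hence for $q>1$ the downward parabola puts $1$ strictly between the roots, contradicting $m,M\in J\subset(1,\tfrac{p}{q})$. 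So (iv) holds and Theorem~\ref{th: mM first} gives convergence.

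The remaining case $q<1$ and $p>1$ is the main obstacle, because there the system $f(M,m)=M,\ f(m,M)=m$ genuinely can have solutions $m<M$ inside $J$, so Theorem~\ref{th: mM first}(iv) is unavailable even though no prime period-two solution exists; one must track the finer dynamics. Here I would adapt Proposition~\ref{prop: f up down r >0 on I}, splitting according to the sign of $r-(p^2q-p)$ as in Section~\ref{sec: The Equation (3-2)}: in the ``small $r$'' regime, embed Eq.(3-2-L) into the third-order equation $x_{n+1}=\hat{f}(x_n,x_{n-1},x_{n-2})$ with $\hat{f}(x,y,z)=f(f(y,z),y)$ as in Lemma~\ref{lemma: f up down ybar sufficient condition}, show on $J^3$ that $\hat{f}$ is constant in $x$ and strictly decreasing in $y$ and $z$, check that $\hat{f}(m,m,m)=M,\ \hat{f}(M,M,M)=m$ has no solution in $J^2$ with $m\ne M$, and invoke Theorem~\ref{th: Mm 3-3}; in the ``large $r$'' regime, run a Lyapunov argument as in Lemma~\ref{lemma: ybar is GA when suff cond 2}, using an invariant of a Lyness-type equation that strictly decreases along orbits away from $\overline{y}$ together with local asymptotic stability of $\overline{y}$, which follows from the argument of Lemma~\ref{lemma: LAS} since $\overline{y}<\tfrac{p}{q}$ and $(p+1)^2+4r(q+1)>0$ (the discriminant of the equilibrium equation of Eq.(3-3) is always positive). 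In either route the decisive step is re-establishing the key inequalities — positivity of the polynomial $h(y,z)$ controlling $D_2\hat{f}$, respectively the strict decrease of the Lyapunov function — now with $r<0$, where the estimates used for $r\ge 0$ break down; the replacement is to exploit $p>q$, $y,z>1$ and $p-q+r>0$ (Lemma~\ref{lemma: p-q+r>0}), which I expect to reduce, after the substitutions of Section~\ref{sec: The Equation (3-2)}, to polynomial positivity statements that may require a computer algebra check as in Section~\ref{appendix: computer 1}.
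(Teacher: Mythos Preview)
Your route diverges sharply from the paper's, and in the hard case it does not close.

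The paper does not split on the signs of $p-1$ and $q-1$ at all, and it does not attempt to rerun the machinery of Section~\ref{sec: The Equation (3-2)} with $r<0$. Instead it uses a single, short argument based on Theorem~\ref{th: Kocic-Ladas}. From $p-q+r>0$ one gets $K_2=\frac{-r}{p-q}<1$, so together with Lemma~\ref{lemma: f up down lemma bounds} the interval $(1,\tfrac{p}{q})$ is invariant and attracting. The M\"obius substitution
\[
y_n=\frac{1+\tfrac{p}{q}\,z_n}{1+z_n},\qquad z_n=\frac{y_n-1}{\tfrac{p}{q}-y_n},
\]
sends $(1,\tfrac{p}{q})$ bijectively onto $(0,\infty)$ and transforms Eq.(3-2-L) into $z_{n+1}=z_n\,h(z_n,z_{n-1})$, where one checks by direct differentiation (using only $p>q$, $r<0$, $p-q+r>0$, and $\frac{-r}{p-q}<\frac{p}{q}$) that $h$ is decreasing in each variable and $w\,h(w,w)$ is increasing. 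Theorem~\ref{th: Kocic-Ladas} then gives global attractivity immediately, with no case analysis and no computer algebra.

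Your plan, by contrast, is only a sketch in the decisive case $q<1$, $p>1$. You acknowledge that the inequalities driving Lemma~\ref{lemma: f up down ybar sufficient condition} and Lemma~\ref{lemma: ybar is GA when suff cond 2} ``break down'' for $r<0$ and hope they can be salvaged using $p-q+r>0$ and $y,z>1$, possibly with CAS assistance; but you do not actually carry this out. Concretely, the positivity of $h(y,z)$ in the proof of Lemma~\ref{lemma: f up down ybar sufficient condition} was obtained by evaluating at the corner $\tfrac{qr}{p-q}$, which is now negative and no longer a lower bound for $J$; and the Lyapunov construction of Lemma~\ref{lemma: ybar is GA when suff cond 2} rests on the Lyness invariant $g(x,y)$ having a strict minimum at $(u,u)$ with $u>1$, whereas here $r<0$ forces $u<1$ and the invariant is no longer positive on the first quadrant. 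So neither branch transfers as stated. Your case $q\ge 1$ or $p\le 1$ via Theorem~\ref{th: mM first}(iv) is fine, and your parabola observation that the value at $x=1$ equals $p-q+r>0$ is a nice way to exclude the $q>1$ subcase, but it is superfluous once you see the Kocic--Ladas route.
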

\begin{proof}
Since $p-q+r>0$ by Lemma \ref{lemma: p-q+r>0}, we have $K_2 = -\frac{r}{p-q} < 1$, 
which together with  Lemma \ref{lemma: f up down lemma bounds}
implies that $[1,\frac{p}{q}]$ is an invariant, attracting compact interval
such that $f(x,y)$ is increasing in $x$ and decreasing in $y$ on $[1,\frac{p}{q}]^2$.
Since $f([1,\frac{p}{q}]^2) \subset (1,\frac{p}{q})$, we see that every solution to Eq.(3-2-L) eventually enters 
the invariant interval $(1,\frac{p}{q})$.
The change of variables
\begin{equation}
\label{eq: change of vars KocicLadas}
y_n = \frac{1+\frac{p}{q} \, z_n}{1+z_n}\, , 
\quad \mbox{or} \quad 
z_n = \frac{x_n-1}{\frac{p}{q}-x_n}\, ,
\end{equation}
  transforms the equation
\begin{equation}
 \label{eq: old KL}
 y_{n+1} = \frac{r+p \, y_n + y_{n+1}}{q\, y_n+y_{n+1}}, 
 \quad n=0,1,\ldots, \quad y_{-1},y_0 \in \left(1,\frac{p}{q}\right)
\end{equation}
 into the equivalent equation
 \begin{equation}
 \label{eq: new KL}
 z_{n+1} = g(z_n,z_{n-1}), 
 \quad n=0,1,\ldots, \quad z_{-1},z_0 \in (0,\infty)
 \end{equation}
 where
 $$
 g(w,v) := \frac{q(1+v)(-\left( q\,\left( p - q + r
       \right)  \right)  + \left( -p^2 + p\,q - q\,r\right) \,w)}
       {(1+w)(-  q\,\left( p - q - q\,r \right)    + 
  \left( -p^2 + p\,q + q^2\,r \right) \,v)}
 $$
We claim that for $w, v \in (0,\infty)$,  
(a) $g(w,w)$ is increasing in $w$,
(b) $g(w,v)/w$ is decreasing in $w$, and 
(c) $g(w,v)/w$ is decreasing in $v$.  Indeed, since $p>q$, $r<0$, $p-q+r>0$, and 
$\frac{-r}{p-q} < \frac{p}{q}$ we have
$$
\frac{d}{dw} \left(g(w,w) \right) =   \frac{-r\, q^2\, \left( 1 + q \right) \, {\left( -p + q \right) }^2 }
        {{\left( - p\,q    + q^2 + q^2\,r - p^2\,w + p\,q\,w + q^2\,r\,w \right) }^2} 
         > 0
$$
$$
\frac{\partial}{\partial w}
\left(\frac{g(w,v)}{v}\right) = 
- \frac{q\,{\left( -p + q \right)}^2\,\left(    q\,
\left( p - q + r \right)     + \left(  p^2 - p\,q + q\,r \right) \,w\right) }
      {{\left( - p\,q   + q^2 + q^2\,r - p^2\,v + p\,q\,v + q^2\,r\,v\right) }^2\,w\,\left( 1 + w \right) }
    < 0 
$$
$$
\frac{\partial}{\partial w}
\left(\frac{g(w,v)}{w}\right) = 
-  \frac{q\,
      \left( 1 + v \right) \,
      \left(    q\,
           \left( p - q + r
           \right)     +
        2\,q\,
         \left( p - q + r
           \right) \,w + 
        \left( p^2 - p\,q + 
           q\,r \right) \,w^2
        \right) }{\left(   
           q\,
           \left( p - q - 
          q\,r \right)    
         + \left(  p^2 - p\,q - 
           q^2\,r \right) \,v
        \right) \,w^2\,
      {\left( 1 + w \right) }^2}
    <  0
$$
Also, note that Eq.(\ref{eq: new KL}) has a unique equilibrium $\overline{z}$.
 Therefore hypotheses (1)--(4) of Theorem \ref{th: Kocic-Ladas}  are satisfied, so
 every solution $\{z_n\}$ to Eq.(\ref{eq: new KL}) converges to 
 $\overline{z}$.  By reversing the change of variables, one can conclude that
 every solution to Eq.(\ref{eq: old KL}) converges to the equilibrium.
 \end{proof}
 
 \bigskip

\noindent {\bf Proof of Theorem \ref{th: 3-2-L}.}  
The four parts of the proof are:
\begin{itemize}
\item[\rm a.] 
{\it $f(x,y)$ is increasing in both $x$ and $y$ on $[m^*,M^*]^2$:}
By Lemma \ref{lemma: f(up,up) and f(down,down)}
the hypotheses of Theorem \ref{th: mM first} part (i).
is satisfied, hence every solution converges to the 
equilibrium $\overline{y}$.
\item[\rm b.] 
{\it $f(x,y)$ is decreasing in both $x$ and $y$ on $[m^*,M^*]^2$:}
By Lemma \ref{lemma: f(up,up) and f(down,down)}
the hypotheses of Theorem \ref{th: mM first} part (ii).
is satisfied, hence every solution converges to the 
equilibrium $\overline{y}$.
\item[\rm c.] 
{\it $f(x,y)$ is decreasing in $x$ and increasing in $y$ on $[m^*,M^*]^2$:}
By the corollary to Theorem \ref{th: Camouzis-Ladas}  
we conclude every solution converges to the unique equilibrium 
  or to a prime period-two solution.
\item[\rm d.] 
{\it $f(x,y)$ is increasing in $x$ and decreasing in $y$ on $[m^*,M^*]^2$:}
By Lemmas \ref{lemma: signs of partials},  \ref{lemma: 0<q<p}, and \ref{lemma: f up down lemma bounds}, there is no loss of generality
in assuming $[m^*,M^*] \subset (K,\frac{p}{q})$, where
$K := \max\{ \frac{-r}{p-q},\frac{q\, r}{p-q}\}$, which we do.
We consider two subcases.
If $r \geq 0$, then Lemma \ref{lemma: 0<q<p}, 
Lemma \ref{lemma: f up down iff} 
and Proposition \ref{prop: f up down r >0 on I}
imply that every solution converges to the unique equilibrium.
If $r<0$, then Lemma \ref{lemma: f up down ybar is GA} 
implies that every solution converges to the unique equilibrium.
\end{itemize}
This completes the proof of Theorem \ref{th: 3-2-L}.
Since Theorem \ref{th: 3-2-L} is just a version of Theorem \ref{th: 3-3}
obtained by an affine change of coordinates,
we have also proved Theorem \ref{th: 3-3} as well.
\hfill $\Box$

\section{Proof of Theorem \ref{th: 3-2}}
\label{sec: Proof 3-2}
The first lemma guarantees solutions to Eq.(3-2) to be bounded.
\begin{lemma}
\label{lemma: 3-2 is bounded}
Let $p>0$, $q>0$ and $r \geq 0$.
There exist positive constants $\mathcal{L}$ and $\mathcal{U}$ such that 
every solution $\{x_n\}_{n=-1}^\infty$ to Eq.(3-2) satisfies 
$x_n \in [\mathcal{L},\mathcal{U}]$ for $n\geq 2$, and 
the function 
$$
f(x,y) = \frac{r+p\,x+y}{q\,x+y}, \quad (x,y) \in (0,\infty)^2
$$
satisfies 
$$
f([\mathcal{L},\mathcal{U}]\times [\mathcal{L},\mathcal{U}]) \subset [\mathcal{L},\mathcal{U}]
$$
\end{lemma}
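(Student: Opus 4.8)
The plan is to produce the bounds by hand, treating the two arguments of $f$ asymmetrically. The point is that, unlike the situation in Lemma \ref{lemma: f is bounded}, the function $f(x,y)=\frac{r+p\,x+y}{q\,x+y}$ is \emph{not} bounded on all of $(0,\infty)^2$ once $r>0$, since it blows up as $(x,y)\to(0,0)$; this is exactly why the conclusion is only claimed for $n\ge 2$. What rescues the argument is that the denominator $q\,x+y$ cannot degenerate as soon as the \emph{first} argument $x$ is bounded away from $0$, and in the recursion $x_{n+1}=f(x_n,x_{n-1})$ the term $x_n$ playing that rôle is automatically bounded below after one step.

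First I would establish a lower bound $\mathcal{L}:=\min\{1,\,p/q\}$ that holds on all of $(0,\infty)^2$. If $p\ge q$, then $r+p\,x+y\ge q\,x+y$, so $f\ge 1$. If $p<q$, then $(\,p/q-1\,)\,y\le 0\le r$, which rearranges to $r+p\,x+y\ge\frac{p}{q}(q\,x+y)$, so $f\ge p/q$. In either case $f(x,y)\ge\mathcal{L}>0$ for all $(x,y)\in(0,\infty)^2$.

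Next, the upper bound, which will only be used when $x\ge\mathcal{L}$. Write $f(x,y)=\dfrac{r}{q\,x+y}+\dfrac{p\,x+y}{q\,x+y}$. The first summand is at most $\dfrac{r}{q\,\mathcal{L}}$, since $q\,x+y\ge q\,x\ge q\,\mathcal{L}$; the second is at most $\max\{1,\,p/q\}$ by the same elementary case split as above (if $p\le q$ it is $\le 1$; if $p>q$ then $p\,x+y\le\frac{p}{q}(q\,x+y)$). Hence $f(x,y)\le\mathcal{U}:=\dfrac{r}{q\,\mathcal{L}}+\max\{1,\,p/q\}$ whenever $x\ge\mathcal{L}$, and clearly $0<\mathcal{L}\le\mathcal{U}$ (if a strictly nondegenerate interval is wanted one may replace $\mathcal{U}$ by $\mathcal{U}+1$ without affecting anything below).

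Finally I would assemble these. For any solution $\{x_n\}$ of Eq.(3-2) and any $n\ge 1$, the arguments $x_{n-1},x_{n-2}$ are positive, so $x_n=f(x_{n-1},x_{n-2})\ge\mathcal{L}$; thus $x_n\in[\mathcal{L},\infty)$ for all $n\ge 1$. Consequently, for $n\ge 2$ the first argument $x_{n-1}$ of $f$ satisfies $x_{n-1}\ge\mathcal{L}$, so the upper bound applies and $x_n\le\mathcal{U}$; hence $x_n\in[\mathcal{L},\mathcal{U}]$ for $n\ge 2$. The invariance $f([\mathcal{L},\mathcal{U}]\times[\mathcal{L},\mathcal{U}])\subset[\mathcal{L},\mathcal{U}]$ is then immediate: any $(x,y)\in[\mathcal{L},\mathcal{U}]^2$ has $x\ge\mathcal{L}$, so both the lower and the upper estimates apply, giving $\mathcal{L}\le f(x,y)\le\mathcal{U}$. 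I do not anticipate a genuine obstacle here; the only thing to get right is the bookkeeping that forces the conclusion to begin at $n\ge 2$, which is dictated by the absence of a global bound near the origin.
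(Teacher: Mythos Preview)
Your argument is correct. Both you and the paper take $\mathcal{L}=\min\{1,p/q\}$ and verify $f\ge\mathcal{L}$ on all of $(0,\infty)^2$ by the same case split on the sign of $p-q$. The difference is in the upper bound: the paper sets $\mathcal{U}=\max\{\,p/q,\,1,\,\frac{r+(p+1)\mathcal{L}}{(q+1)\mathcal{L}}\,\}$ and proves $f(x,y)\le\mathcal{U}$ under the \emph{symmetric} assumption $x,y\ge\mathcal{L}$, by writing $x=\mathcal{L}+v$, $y=\mathcal{L}+w$ and checking a linear inequality in $v,w$. Your decomposition $f=\frac{r}{qx+y}+\frac{px+y}{qx+y}$ is more economical in that the resulting bound $\mathcal{U}=\frac{r}{q\mathcal{L}}+\max\{1,p/q\}$ needs only $x\ge\mathcal{L}$. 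This asymmetry is exactly what lets you reach $x_n\in[\mathcal{L},\mathcal{U}]$ already at $n=2$: after one step $x_1\ge\mathcal{L}$, and then $x_2=f(x_1,x_0)$ has its first argument in $[\mathcal{L},\infty)$, which suffices. The paper's symmetric estimate, taken literally, only kicks in once \emph{both} arguments are $\ge\mathcal{L}$, i.e.\ from $n\ge 3$; the distinction is immaterial for the sequel, but your version matches the lemma's stated threshold more directly.
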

\begin{proof}
Set 
$$
\mathcal{L} := \min \left\{\frac{p}{q},1\right\}, \quad 
\mbox{\rm and} \quad 
\mathcal{U} := \max \left\{ \frac{p}{q},1, \frac{r+(p+1) \mathcal{L} }{(q+1)\,\mathcal{L}}\right\}
$$
Since 
$$
0 \leq r + (p - q \mathcal{L})\,x + (1-\mathcal{L})\, y   \quad 
\mbox{for} \ (x,y) \in (\mathcal{L},\infty)^2\, ,
$$
then  
$\mathcal{L} \, q \, x + \mathcal{L}\, y \leq r + p\,x+q\,y$ for $x,y  \geq \mathcal{L}$,, i.e., 
$$
\mathcal{L} \leq \frac{r+p\,x+y}{q\,x+y}, \quad (x,y) \in (0,\infty)^2
$$
From the definition of $\mathcal{U}$ we have
$$
-r + (q\, \mathcal{U} - p) \mathcal{L} + ( \mathcal{U}-1) \mathcal{L} \geq 0
$$
Write $x,\, y \in [\mathcal{L},\infty)$ as 
$x = \mathcal{L}+v$, $y=\mathcal{L}+w$ for $v,\, w\in [0,\infty)$.  
Then for $v,w \in [0,\infty)$,
$$
\begin{array}{l}
-r + (q\, \mathcal{U}-p) \, x + (\mathcal{U}-1)\, y \\ \\
\quad = -r + (q\, \mathcal{U}-p) \, (\mathcal{L}+v) + (\mathcal{U}-1)\, (\mathcal{L}+w) \\ \\
\quad = -r + (q \, \mathcal{U} - p ) \, \mathcal{L} + (\mathcal{U}-1) \, \mathcal{L} + 
(q \, \mathcal{U}-p) \, v + (\mathcal{U}-1)\, w \\ \\
\quad  \geq 0 
\end{array}
$$
that is,
$$
\frac{r+p\,x+y}{q\,x+y} \leq \mathcal{U} \quad \mbox{for } x,y \in [\mathcal{L},\infty)
$$
\end{proof}

Inspection of the proof of Proposition \ref{prop: invariant and attracting} reveals 
that, given that we have Lemma \ref{lemma: 3-2 is bounded},
the conclusion of the proposition is true concerning Eq.(3-2).
The statement is given next.
\begin{proposition}
\label{prop: invariant and attracting 3-2}
At least one of the following statements is true:
\begin{itemize}
\item[\rm (A)]  
Every solution to Eq.(3-2) converges to the  equilibrium.
\item[\rm (B)]   
There exist  $m^*$, $M^*$ with \  $L\, \leq m^*< M^*$ s.t. 
\begin{itemize}
\item[\rm (i)]  $[m^{*},M^{*}]$ is an invariant interval for Eq.(3-2), i.e., 
$f([m^{*},M^{*}]\times [m^{*},M^{*}]) \subset [m^{*},M^{*}]$.
\item[\rm (ii)] Every solution  to Eq.(3-2) eventually enters $[m^{*},M^{*}]$.
\item[\rm (iii)] $f(x,y)$ is coordinate-wise strictly monotonic on $[m^{*},M^{*}]^{2}$.
\end{itemize}
\end{itemize}
\end{proposition}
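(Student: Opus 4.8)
The plan is to rerun the proof of Proposition \ref{prop: invariant and attracting} essentially verbatim, with Lemma \ref{lemma: 3-2 is bounded} playing the role that Lemma \ref{lemma: f is bounded} played there. The only features of Eq.(3-2-L) used in that argument are: a compact invariant interval $[\mathcal{L},\mathcal{U}]$ with $\mathcal{L}>0$ that every solution eventually enters; a separate treatment of the case $p=q$; and the sign analysis of the partials of $f$ together with the shrinking-interval estimate. The first of these is precisely the content of Lemma \ref{lemma: 3-2 is bounded}, the only difference being that a general solution of Eq.(3-2) is known to lie in $[\mathcal{L},\mathcal{U}]$ only from the term of index $2$ onward (rather than index $1$), an immaterial shift of clock.

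First I would dispose of $p=q$. For Eq.(3-2) with $r>0$ one computes $D_1f(x,y)=-pr/(px+y)^2<0$ and $D_2f(x,y)=-r/(px+y)^2<0$ on $(0,\infty)^2$, so $f$ is decreasing in both coordinates; here no case split is needed, since $r>0$ throughout (in contrast with the proof of Lemma \ref{lemma: p=q}, where $r$ may be negative). As in that lemma, a direct algebraic calculation shows that any solution $(m,M)\in[\mathcal{L},\mathcal{U}]^2$ of $M=f(m,m)$, $m=f(M,M)$ must satisfy $m=M$, so hypothesis (ii) of Theorem \ref{th: mM first} holds on $[\mathcal{L},\mathcal{U}]$ and statement (A) follows. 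Hence from here on I may assume $p\neq q$.

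With $p\neq q$, the whole auxiliary apparatus transfers unchanged: Lemma \ref{lemma: signs of partials} concerns the same function $f$; the constants $K_1=qr/(p-q)$ and $K_2=-r/(p-q)$, the envelopes $\phi(m,M)$ and $\Phi(m,M)$, the nested iteration $m_{\ell+1}=\phi(m_\ell,M_\ell)$, $M_{\ell+1}=\Phi(m_\ell,M_\ell)$ started at $m_0=\mathcal{L}$, $M_0=\mathcal{U}$, and the limits $m^*$, $M^*$ are all defined exactly as before, now inside $[\mathcal{L},\mathcal{U}]$. Lemma \ref{lemma: m < phi} and the dichotomy lemma preceding the proof of Proposition \ref{prop: invariant and attracting} use nothing about the left endpoint of the domain beyond the containment $[m_\ell,M_\ell]\subset[\mathcal{L},\mathcal{U}]$, so their proofs (the four-case diagram, the cubic in $m$ and its three roots) carry over word for word; moreover, since $r>0$ only cases (a) and (c) of Lemma \ref{lemma: m < phi} can occur, which only shortens matters. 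One then assembles the dichotomy exactly as in the proof of Proposition \ref{prop: invariant and attracting}: if (A) fails then $p\neq q$; every solution of Eq.(3-2) eventually enters each $[m_\ell,M_\ell]$, so $m^*=M^*$ would force $x_n\to\overline y$, i.e.\ (A); hence $m^*<M^*$, and by the dichotomy lemma there is $N$ with $\{K_1,K_2\}\cap[m_N,M_N]=\emptyset$, so $f$ is coordinate-wise strictly monotone on $[m_N,M_N]^2$; taking $m^*:=m_N$ and $M^*:=M_N$ yields an interval that is invariant and attracting for Eq.(3-2), which establishes (B).

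I do not expect a genuine obstacle. The only points that call for a moment's attention are the $p=q$ computation for Eq.(3-2) (which is strictly easier than its Eq.(3-2-L) analogue because $r>0$ removes the sign ambiguity), and the bookkeeping that Lemma \ref{lemma: 3-2 is bounded} traps solutions only from index $2$, so every ``eventually'' in the argument is shifted by one step and nothing else changes.
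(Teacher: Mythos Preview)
Your proposal is correct and is exactly the approach the paper takes: the paper's entire justification for this proposition is the one-line remark that, given Lemma~\ref{lemma: 3-2 is bounded}, the proof of Proposition~\ref{prop: invariant and attracting} carries over verbatim to Eq.(3-2). Your additional observations (that $r>0$ simplifies the $p=q$ case and that the index shifts by one) are accurate refinements the paper does not bother to spell out.
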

The proof of Theorem \ref{th: 3-2-L} may be reproduced here in its entirety
with the only change being
the elimination of the case $r<0$, which presently does not apply.
Everything else in the proof applies to Eq.(3-2).
The proof of Theorem \ref{th: 3-2} is complete.
\newpage

\section{Appendix: Computer Algebra System Code}
\label{appendix: computer 1}
\begin{table}[!ht]
\centerline{\includegraphics[width=5in]{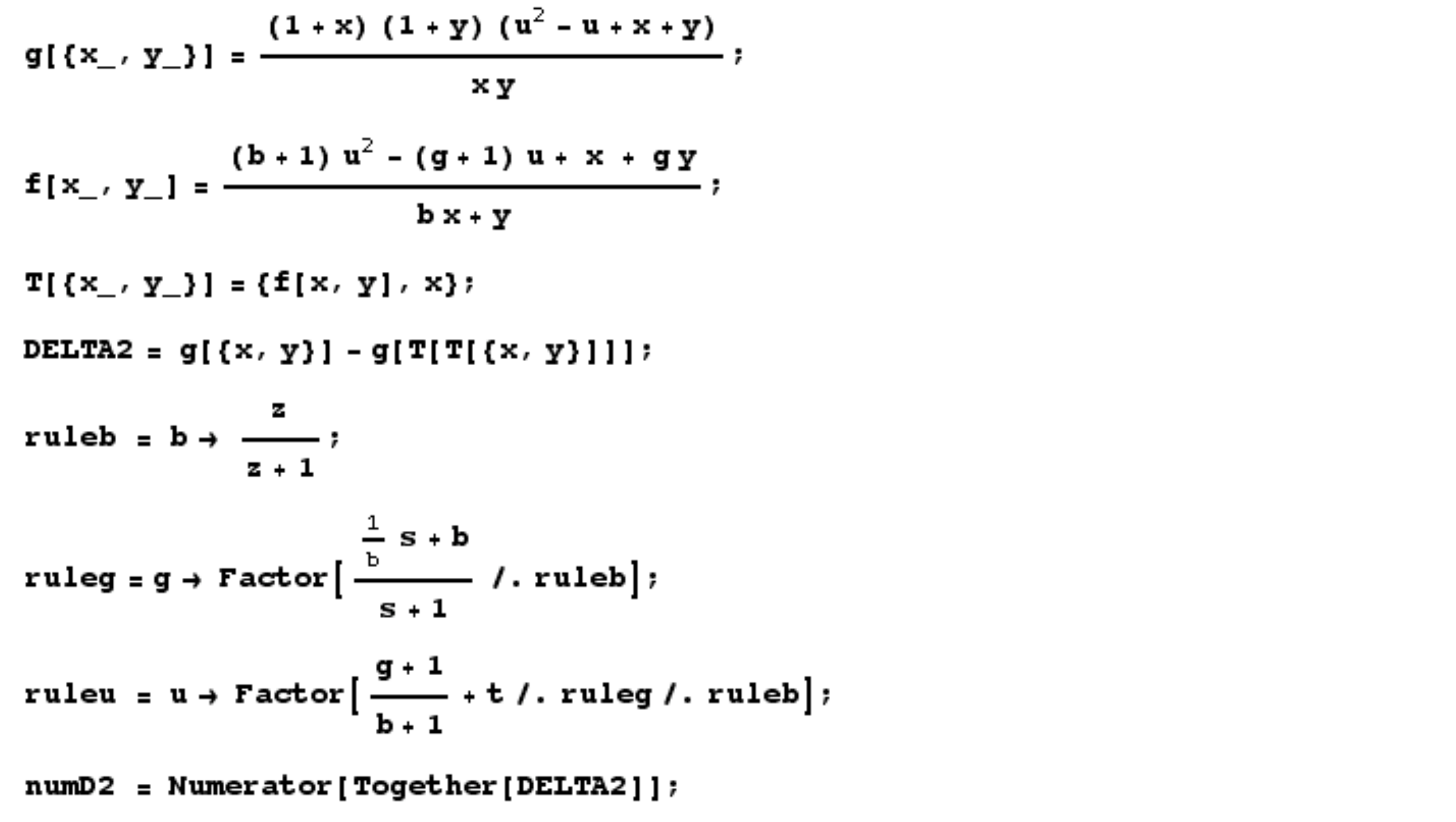}}
\centerline{\includegraphics[width=5in]{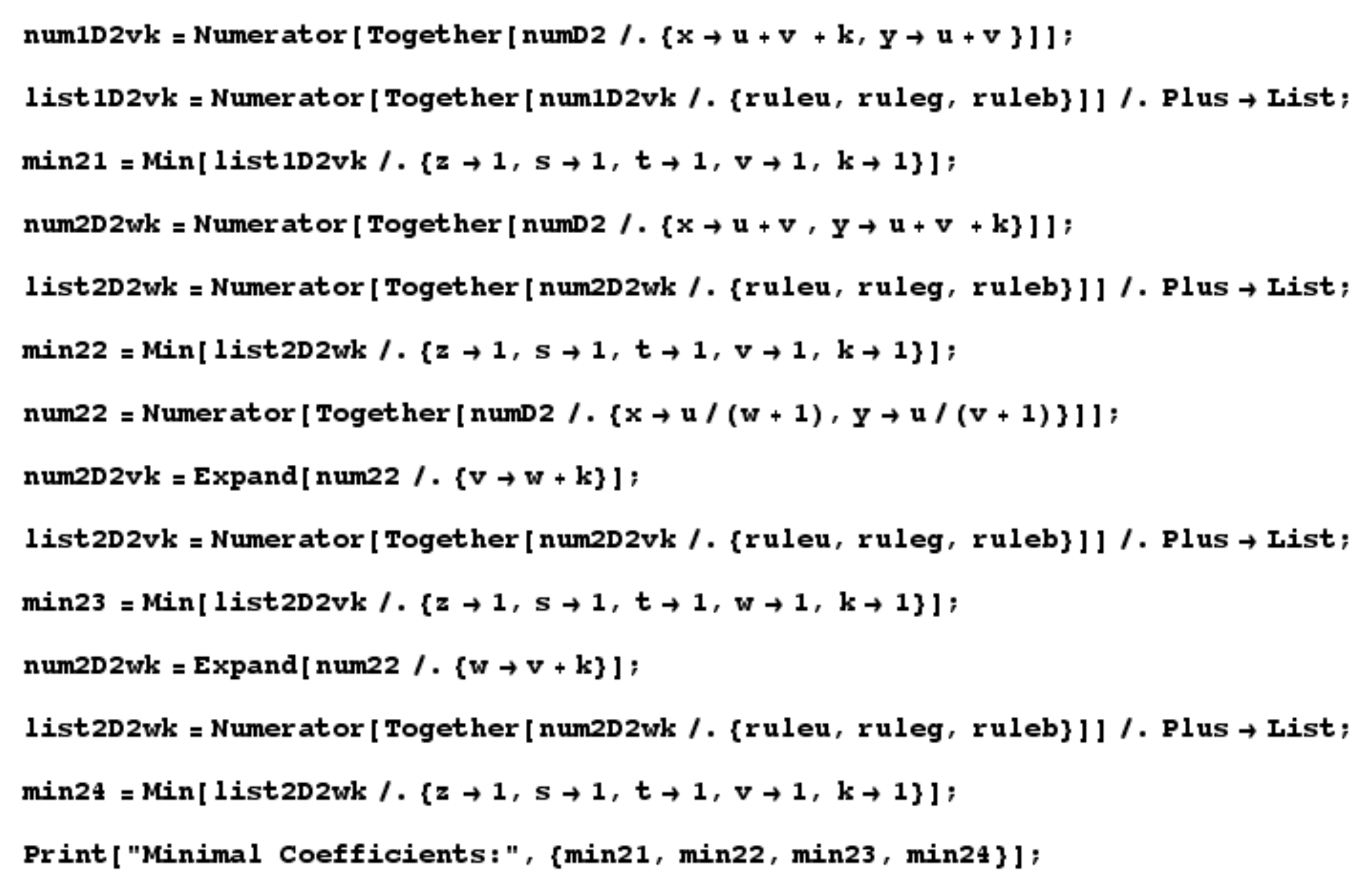}}
\caption{Mathematica code needed to do the calculations in Claim \ref{claim: Q1Q3, g>b}.
Here we define the functions $g$, $f$ and $T$, as well as the expression DELTA2.
The reparametrizations indicated in the proof of Claim \ref{claim: Q1Q3, g>b} for the case
$g > b$ are defined as  substitution rules. 
To verify the positive sign of a polynomial of nonnegative variables $z$, $s$, \ldots,
we form a list with the terms of the polynomial, and then substitute the number $1$ 
for the variables in order to extract the smallest coefficient.
  This input was tested on Mathematica Version 5.0
\cite{Mathematica}.}
\label{table: 1}
\end{table}
\begin{table}[!ht]
\centerline{\includegraphics[width=5in]{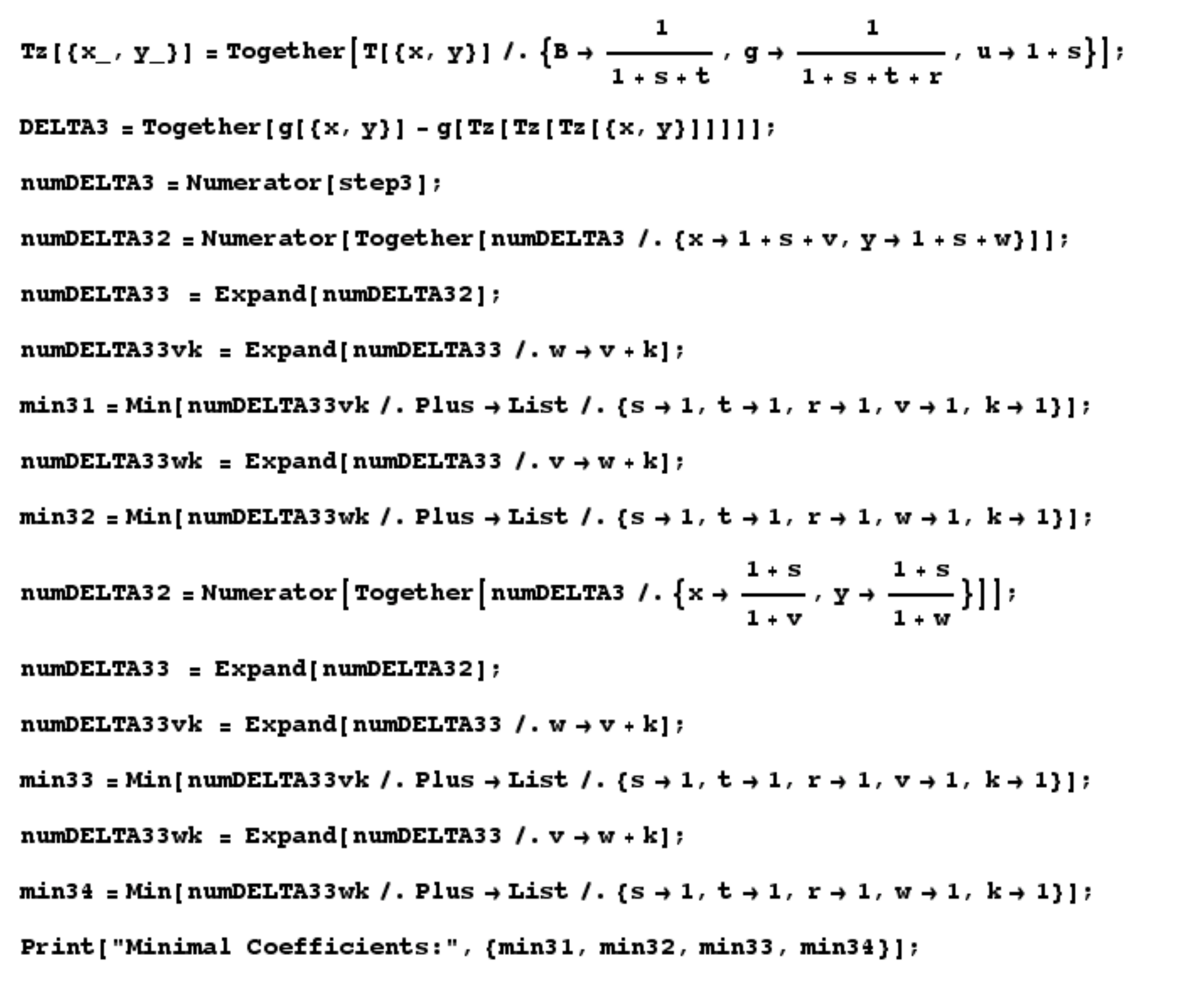}}
\caption{
Mathematica code needed to do the calculations in Claim \ref{claim: Q1Q3, g>b}
when $g\leq b$  The functions $g$, $f$ and $T$ are defined as before (not shown).  
  This input was tested on Mathematica Version 5.0
\cite{Mathematica}.}
\label{table: 3}
\end{table}

\newpage

\end{document}